\tikzset{>=latex}
\pgfplotsset{compat=1.18}
\newtheorem{theorem}{Theorem}[section]
\newtheorem{lemma}[theorem]{Lemma}
\theoremstyle{definition}
\theoremstyle{remark}
\newtheorem{remark}[theorem]{Remark}
\numberwithin{equation}{section}
\newcommand{\elem}{\ensuremath{E}}
\newcommand{\fineElem}{\ensuremath{e}}
\newcommand{\mesh}{\ensuremath{\mathcal E}}
\newcommand{\faceSet}{\ensuremath{\mathcal F}}
\newcommand{\face}{\ensuremath{F}}
\newcommand{\skeletalSpace}{\ensuremath{M}}
\newcommand{\traceSpace}{\ensuremath{\overline \skeletalSpace}}
\newcommand{\msSpace}{\ensuremath{\tilde M}_\coarse}
\newcommand{\contdElementSpace}{\ensuremath{V^\textup c_\textup{disc}}}
\newcommand{\linElementSpace}{\ensuremath{\overline V}}
\newcommand{\discElementSpace}{\ensuremath{V}}
\newcommand{\polynomials}{\ensuremath{\mathbb P}}
\newcommand{\fine}{\ensuremath{h}}
\newcommand{\coarse}{\ensuremath{H}}
\newcommand{\averagingOp}{\ensuremath{I^\textup{avg}}}
\newcommand{\injectionOp}{\ensuremath{\mathcal I_\fine}}
\newcommand{\projectionOp}{\ensuremath{\Pi_\coarse}}
\newcommand{\liftingOp}{\ensuremath{S}}
\newcommand{\faceProj}{\ensuremath{\pi_\fine}}
\renewcommand{\vec}[1]{\ensuremath{\boldsymbol{#1}}}
\newcommand{\Nu}{\ensuremath{\vec \nu}}
\newcommand{\dx}{\ensuremath{\, \textup d x}}
\newcommand{\ds}{\ensuremath{\, \textup d \sigma}}
\newcommand{\localU}{\ensuremath{\mathcal U}}
\newcommand{\localQ}{\ensuremath{\vec{\mathcal Q}}}
\newcommand{\kerSpace}{\mathcal{W}_\fine}
\newcommand{\msOp}{\mathcal{R}}
\newcommand{\corOp}{\mathcal{C}}
\newcommand{\bubble}{\mathfrak{b}}
\newcommand{\idOp}{\mathbf{1}}
\newcommand{\patch}[1]{\mathsf{N}^{#1}}
\newcommand{\supp}{\operatorname{supp}}
\newcommand{\diam}{\operatorname{diam}}
\newcommand{\avg}[1]{\{\!\{ #1 \}\!\}}
\newcommand{\jump}[1]{{[\![ #1 ]\!]}}
\newcommand{\IR}{\ensuremath{\mathbb R}}
\newcommand{\IN}{\ensuremath{\mathbb N}}
\newcommand{\IP}{\ensuremath{\mathbb P}}
\newcommand{\J}{\ensuremath{\jump{\cdot}}}
\newcommand{\eps}{\varepsilon}
\newcommand{\ddiv}{\operatorname{div}}
\newcommand{\calP}{\ensuremath{\mathcal{P}}}
\def\N{\mathbb{N}}
\def\R{\mathbb{R}}
\newcommand{\AC}{\mathfrak{A}}
\newcommand{\Nb}{\mathsf{N}}
\definecolor{myOrange}{RGB}{225,92,22}
\renewcommand{\hat}{\widehat}
\newcommand\circlearound[1]{\scalebox{0.8}{\tikz[baseline]\node[circle,inner sep=1pt,draw,anchor=base] {\textup{#1}};}}
\newcommand*{\corr}[1]{#1}
\begin{document}

\title[LOD for HDG]{A localized orthogonal decomposition strategy for hybrid discontinuous Galerkin methods} 

\author[P.~Lu]{Peipei Lu}
\address{Department of Mathematics Sciences, Soochow University, Suzhou, 215006, China}
\email{pplu@suda.edu.cn}

\author[R.~Maier]{Roland Maier}
\address{Institute for Applied and Numerical Mathematics, Karlsruhe Institute of Technology, Englerstr.~2, 76131 Karlsruhe, Germany}
\email{roland.maier@kit.edu}

\author[A.~Rupp]{Andreas Rupp}
\address{\corr{Department of Mathematics, Saarland University, 66123 Saarbrücken, Germany}}
\email{\corr{andreas.rupp@uni-saarland.de}}

\thanks{A.~Rupp has been supported by the Academy of Finland's decision numbers 350101, 354489, 359633, 358944, and Business Finland's project number 539/31/2023.
R.~Maier acknowledges support from the German Academic Exchange Service (DAAD), project number 57711336.}

\subjclass[2010]{%
{65N12}, 
{65N30}
}

\begin{abstract}
 We formulate and analyze a multiscale method for an elliptic problem with an oscillatory coefficient based on a skeletal (hybrid) formulation. More precisely, we employ hybrid discontinuous Galerkin approaches and combine them with the localized orthogonal decomposition methodology to obtain a coarse-scale skeletal method that effectively includes fine-scale information. This work is the first step in reliably merging hybrid skeletal formulations and localized orthogonal decomposition to unite the advantages of both strategies. Numerical experiments are presented to illustrate the theoretical findings.
 \\[1ex] \noindent \textsc{Keywords.}
 multiscale method, hybrid method, elliptic problems, Poincar\'e--Friedrichs inequalities for DG and HDG
\end{abstract}

\date{\today}

\maketitle
\section{Introduction}
Multiscale problems are difficult because oscillation scales need to be resolved when using classical numerical methods such as finite element methods. Otherwise, suboptimal convergence rates and pre-asymptotic effects can be observed. However, globally, resolving fine-scale features is computationally unfeasible. Moreover, often, only the coarse-scale behavior of solutions is of interest. Computational multiscale methods tackle this problem and construct problem-adapted approximation spaces that suitably include fine-scale information while operating on a coarse scale of interest.

We present a hybrid multiscale method that approximately solves elliptic multiscale problems in the framework of the localized orthogonal decomposition method (LOD). We aim to combine the advantages of hybrid finite elements (local discretization character, rigorous analysis for static condensation, enhanced convergence properties) with the advantages of the LOD (rigorous multiscale and localization analysis as well as optimal rates in the multiscale setting under minimal structural assumptions). This work provides an initial analysis and is a first step towards enhanced hybrid, local, and high-order discretizations of multiscale problems based on the LOD. 

Prominent (conforming) multiscale methods that only pose minimal structural assumptions on the diffusion coefficient in the context of elliptic multiscale problems are, e.g., generalized (multiscale) finite element  methods~\cite{BabuskaO83,BabuskaCO94,BabuskaL11,EfendievGH13,MaS22} \corr{(which are particularly well-suited for high-contrast problems)}, adaptive local bases~\cite{GrasedyckGS12,Weymuth16}, the LOD~\cite{MalqvistP14,HenningP13}, gamblets~\cite{Owhadi17}, and their variants. The minimal assumptions on the coefficient come at the moderate cost of an increased computational overhead (e.g., increased support of the basis functions or an increased number of basis functions per mesh entity, typically logarithmically concerning the mesh size). For a more detailed overview of corresponding methods, we refer to the textbooks~\cite{OwhadiS19,MalqvistP20} and the review article~\cite{AltmannHP21}. Recently, progress has been made regarding the question whether the computational overhead can be further reduced, see~\cite{HauckP23}.
The vast majority of the (early) multiscale methods restrict themselves to lowest-order approaches, which appears to be a natural choice because of the typical lack of regularity of the solution. However, there are also approaches that obtain higher-order convergence properties. Examples are the methods in~\cite{LiMT12,AbdulleB12} based on the heterogeneous multiscale method or in~\cite{AllaireB05,HesthavenZZ14} based on the multiscale finite element method. These methods, however, require additional structural assumptions on the coefficient. 

Generally, nonconforming finite elements, such as the discontinuous Galerkin (DG) method, seem to be well-suited for higher-order approximations as they relax the continuity constraint in their test and trial spaces and are, e.g., employed for the multiscale strategies in~\cite{WangGS11,Abdulle12}. The relaxed continuity constraint also gives hope to retaining a very localized influence of small-scale structures.
The first DG method in connection with the LOD was presented in~\cite{ElfversonGM13,ElfversonGMP13}, where a truly DG multiscale space of first order is constructed, but an extension to a higher-order setting seems possible. Besides, even in the original LOD method, discontinuous functions are beneficial when localizing the computations, see~\cite{HenningP13}. A conforming LOD-type multiscale method that is based on DG spaces for suitable constraint conditions is investigated in~\cite{Maier21,DongHM22} (see also~\cite{Maier20}) and achieves higher-order convergence rates under minimal regularity assumptions, only requiring piecewise regularity of the right-hand side. Regarding the super-localization with higher-order rates, we refer to~\cite{FreeseHKP22}. An LOD approach in connection with mixed finite elements is presented in~\cite{HellmanHM16}. 

Compared to classical DG methods, hybrid finite element methods use additional unknowns on the skeleton of the mesh, which offers several desirable properties. They have been introduced in~\cite{Veubke65} and allow for static condensation, which reduces the number of unknowns in the linear system of equations and recovers a symmetric positive matrix for mixed methods. Later, \cite{ArnoldB85} discovered that the additional information in the skeleton unknowns can be used to enhance the order of convergence in a postprocessing step. A comprehensive overview of several hybrid finite element methods such as hybrid Raviart--Thomas, hybrid Brezzi--Douglas--Marini, and other hybrid discontinuous Galerkin methods can be found in \cite{CockburnGL09}. Another popular class of hybrid finite elements is the hybrid high-order (HHO) method initially proposed in \cite{PietroEL14}. Compared to the hybrid methods, HHO methods use a more involved stabilization that grants improved convergence properties.

The attractive properties of hybrid finite elements have led to several strategies to use them in a multiscale context, such as multiscale hybrid-mixed methods~\cite{HarderPV13,ArayaHPV13} and the multiscale hybrid high-order method~\cite{CicuttinEL19} (that can be shown to be equivalent, see~\cite{ChaumontELV22}). Another approach is the multiscale hybrid method in~\cite{BarrenecheaGP22} related to the multiscale hybrid-mixed method. These methods introduce oscillating shape functions in the spirit of a multiscale finite element approach, similar to the ideas in \cite{LeBrisLL14} based on Crouzeix--Raviart elements. Other hybrid multiscale approaches are, e.g., the multiscale mortar mixed finite element method~\cite{ArbogastPWY07} or the multiscale HDG method~\cite{EfendievLS15}. These methods typically require additional (piecewise) regularity assumptions on the coefficient, at least concerning the coarse scale of interest. Finally, the hybrid localized spectral decomposition approach in~\cite{MadureiraS21} is another hybrid multiscale method, focusing on high-contrast problems.

Our method is the first hybrid finite element scheme based on the LOD methodology. It requires minimal assumptions on the coefficient and uses a fully condensed setting. We construct oscillating shape functions on the skeleton (union of mesh faces) and not in the bulk domain (union of mesh elements). LOD-based multiscale methods typically rely on the nestedness of discrete spaces on different mesh levels. Such a property does not hold for (fully) condensed hybrid methods since test and trial spaces are constructed on the skeleton of the mesh, which grows as the mesh is refined. This obstacle also appears if one considers multigrid methods to solve or precondition the systems of equations that arise from fully condensed hybrid finite element methods. At the moment, there are two competing approaches to tackle this issue: heterogeneous multigrid methods (see e.g.~\cite{CockburnDGT2013}), whose first step is to replace the discretization scheme with a non-hybrid scheme, and homogeneous multigrid methods (see e.g.~\cite{LuRK21,LuRK22a,LuRK22b,LuRK23}), which use the same discretization on all mesh levels. Some of our analyses rely on techniques developed using the latter approach. Although both homogeneous and heterogeneous multigrid methods have certain advantages, in particular in the context of multiscale problems, we believe that homogeneous methods are better suited for LOD approaches since they exhibit more regular execution patterns and the numerical approximates have the same basic properties (mass conservation, etc.) on all levels.

{A particular advantage of our mixed hybrid approach over primal finite element formulations is the in situ generation of an optimally convergent flux field. This flux field can be post-processed into an optimally converging $H(\mathrm{div})$-conforming flow field that is of primary interest for reactive transport for porous media if the considered elliptic problem (see~\eqref{eq:PDEell} below) is interpreted as Darcy's equation. On the contrary, non-hybridized dual mixed methods induce a saddle-point structure that is usually harder to solve than our statically condensed system of equations.}

The remaining parts of this paper are structured as follows. Section~\ref{s:probform} presents the model problem and introduces HDG methods. We then provide beneficial preliminary results in Section~\ref{s:prelim} and present a prototypical skeletal multiscale method based on the ideas of the LOD in Section~\ref{s:msmethod}. The method is analyzed in Section~\ref{s:err}, and a localized version is discussed in Section \ref{SEC:localized}. Numerical illustrations are presented in Section~\ref{s:numerics}. Some auxiliary results on a lifting operator and a Poincar\'e--Friedrichs inequality for broken skeletal spaces are presented in the appendix.

\section{Problem formulation}\label{s:probform}
\subsection{Elliptic model problem}
We investigate the diffusion equation in dual form. That is, we search for $u\colon \Omega \to \R$ and $\vec q\colon\Omega \to \R^d$ (in suitable spaces) such that
\begin{equation}\label{eq:PDEell}
  \begin{aligned}
    \ddiv \vec q &= f &&\text{ in } \Omega,\\
    A^{-1} \vec q + \nabla u &= 0 &&\text{ in } \Omega,\\
    u &= 0 &&\text{ on } \partial \Omega,
  \end{aligned}
\end{equation}
with respect to some bounded Lipschitz polytope $\Omega \subseteq \R^d$, $d \in \{1,2,3\}$. Here, $f \in L^2(\Omega)$ denotes a given right hand side and $A \in \AC$ is a rough coefficient with
\begin{equation}\label{eq:admissibleA}
  \AC := \left\{\begin{aligned} 
    &A \in L^\infty(\Omega;\R_\mathrm{sym}^{d\times d})\colon \exists\, 0 < \alpha \leq \beta < \infty\,\colon\\&\forall \xi \in \R^d \text{ and a.e. } x \in \Omega \colon \alpha |\xi|^2 \leq A(x)\xi \cdot \xi \leq \beta |\xi|^2\end{aligned}\right\}.
\end{equation}
In particular, $A$ is symmetric, bounded, and uniformly elliptic. We are especially interested in the case in which $A$ may describe $\eps$-scale features of some given medium, where $0 < \eps \ll 1$.
%
\subsection{Meshes}\label{SEC:disc_skel_methods}
%
The unknown $u$ in \eqref{eq:PDEell} can usually not be computed analytically. Thus, we need to approximate it numerically. To this end, we use a hybrid discontinuous Galerkin (HDG) method. In this work, the numerical scheme is based on the degrees of freedom of a coarse mesh $\mesh_\coarse$ but also relies on information on a fine mesh $\mesh_\fine$. The respective sets of faces are denoted by $\faceSet_\coarse$ and $\faceSet_\fine$, and the union of all faces is denoted the \emph{skeleton}. For the sake of simplicity, we assume both meshes to be geometrically conforming, shape regular, and simplicial, and that $\mesh_\fine$ can be generated from~$\mesh_\coarse$ by uniform refinement. In what follows, we use the placeholder $\star$ to indicate that the fine or the coarse mesh size could be considered, i.e., $\star \in \{\fine,\coarse\}$.

The fine mesh $\mesh_\fine$ is assumed to be fine enough to resolve the small-scale information encoded in the coefficient. For simplicity, we assume that $A$ is an element-wise constant with respect to an intermediate mesh $\mesh_\eps$, i.e.,
\begin{equation}\label{EQ:a_const}
 A|_\fineElem = c_\fineElem > 0 \qquad \text{ for all } \fineElem \in \mesh_\eps,
\end{equation}
where $\fine < \eps < \coarse$, and we again assume that the corresponding meshes are refinements of each other. In particular, $A$ is also element-wise constant on the mesh $\mesh_\fine$. In the present setting, $\eps$ is relatively small indicating that $A$ contains heavy oscillations. Therefore, $\mesh_\fine$ possibly includes many elements such that a direct global finite element computation for it is unfeasible. That is why one would like to conduct numerical simulations concerning the coarser mesh $\mesh_\coarse$. Note that throughout this work, we denote mesh elements of $\mesh_\fine$ \corr{by $\fineElem$ and mesh elements of $\mesh_\coarse$ by $\elem$. If both are possible, we use $\elem$.} 
%
\section{\corr{HDG methods}}\label{ss:hdg}
%
\corr{In this manuscript, we consider several classes of HDG methods (LDG-H, EDG, RT-H, and BDM-H methods) of degree at most $p$ and construct a localized orthogonal decomposition framework that can be used in concert with any of these HDG methods. To this end, we start with an abstract definition of HDG methods and derive properties (of all described HDG methods) that can be used for localized orthogonal decomposition in this section. Notably, although HDG methods are generally of order $p$, the convergence rate of the resulting multiscale method is only one due to the minimal assumptions regarding regularity.}
\subsection{\corr{Definition of the considered HDG finite element methods}}
Rendering a unified analysis, we use the approach of \cite{CockburnGL09} and characterize the hybrid methods by defining their local test and trial spaces and their local solvers. For $\star \in \{\fine,\coarse\}$, the test and trial spaces are locally given by
\begin{itemize}[leftmargin=*]
 \item $\skeletalSpace_\face$ for the skeletal unknown $m_\star|_\face$ approximating the trace of $u$ on $\face \in \faceSet_\star$,
 \item $V_\elem$ for the primal unknown $u_\star|_\elem$ approximating $u$ in $\elem \in \mesh_\star$, 
 \item $\vec W_\elem$ for the dual unknown $\vec q_\star|_\elem$ approximating $\vec q$ in $\elem \in \mesh_\star$, 
\end{itemize}
and they can be concatenated to respective global spaces, i.e.,
\begin{align*}
  \skeletalSpace_\star &:= \left\{ m \in L^2 (\faceSet_\star) \;\middle|\;
  \begin{array}{r@{\,}c@{\,}ll}
  m|_{\face} &\in& \skeletalSpace_\face & \text{ for all } \face \in \faceSet_\star\\
  m|_{\face} &=& 0 & \text{ for all } \face \subset \partial \Omega   
  \end{array} \right\}, \\
  \discElementSpace_\star &:=\bigl\{ v \in L^2(\Omega) \big|\;v|_{\elem} \in V_\elem \quad\text{for all } \elem \in \mesh_\star \bigr\},\\
  \vec W_\star &:=\bigl\{ \vec q \in L^2(\Omega;\mathbb R^d) \big|\;\vec q|_{\elem} \in \vec W_\elem \quad\text{for all } \elem \in \mesh_\star \bigr\}.    
\end{align*}
Note that here and in the following sections, we slightly abuse the notation and write $L^2(\faceSet_\star)$, which needs to be understood face-wise. For later use, we also define the localized skeletal space on the interior of a union of elements $\omega \subset \Omega$ by
\begin{align*}
 \skeletalSpace_\star(\omega) &:= \left\{ m \in L^2 (\faceSet_\star \cap \overline\omega) \;\middle|\;
 \begin{array}{r@{\,}c@{\,}ll}
  m|_{\face} &\in& \skeletalSpace_\face & \text{ for all } \face \in \faceSet_\star \cap \overline{\omega}\\
  m|_{\face} &=& 0 & \text{ for all } \face \subset \partial \omega   
 \end{array} \right\}.
\end{align*}
If $\omega = \Omega$, we have $\skeletalSpace_\star = \skeletalSpace_\star(\Omega)$. This kind of definition applies verbatim to other localized spaces. Note that we may use that functions in $\skeletalSpace_\star(\omega)$ can be extended by zero to the whole domain $\Omega$. 

The local solvers to determine a discrete solution on the whole domain are characterized by the mappings
\begin{align*}
 \localU & \colon L^2 (\faceSet_\fine) \to \discElementSpace_\fine, & \mathcal V& \colon L^2(\Omega) \to \discElementSpace_\fine, \\
 \localQ &\colon L^2 (\faceSet_\fine) \to \vec W_\fine, & \vec{\mathcal P} & \colon L^2(\Omega) \to \vec W_\fine
\end{align*}
that will be defined in more detail below. We emphasize that our multiscale strategy will only require these local solvers on the fine scale (i.e., with mesh parameter~$\fine$). Therefore, we will not use a subscript $\fine$ when we refer to the local solvers. Notably, the left-hand side operators are usually restricted to 
\begin{equation*}
 \localU \colon \skeletalSpace_\fine \to \discElementSpace_\fine \quad \text{ and } \quad \localQ \colon \skeletalSpace_\fine \to \vec W_\fine,
\end{equation*}
and they are defined element-wise. For a given element ${\corr{\fineElem}}\in\mesh_\fine$, they map the skeletal unknown $m$ to $u_{\corr{\fineElem}}$ and $\vec q_{\corr{\fineElem}}$ satisfying
\begin{subequations}\label{EQ:hdg_scheme}
\begin{align}
 \int_{\corr{\fineElem}} A^{-1} \vec q_{\corr{\fineElem}} \cdot \vec p_{\corr{\fineElem}} \dx - \int_{\corr{\fineElem}} u_{\corr{\fineElem}} \ddiv \vec p_{\corr{\fineElem}} \dx
 & = - \int_{\partial {\corr{\fineElem}}} m \vec p_{\corr{\fineElem}} \cdot \Nu \ds\label{EQ:hdg_primary} \\
 \int_{\partial {\corr{\fineElem}}} ( \vec q_{\corr{\fineElem}} \cdot \Nu + \tau u_{\corr{\fineElem}} ) v_{\corr{\fineElem}} \ds - \int_{\corr{\fineElem}} \vec q_{\corr{\fineElem}} \cdot \nabla v_{\corr{\fineElem}} \dx
 & = \tau \int_{\partial {\corr{\fineElem}}} m v_{\corr{\fineElem}} \ds \label{EQ:hdg_flux}
\end{align}
\end{subequations}
for all $v_{\corr{\fineElem}} \in V_{\corr{\fineElem}}$, $\vec p_{\corr{\fineElem}} \in \vec W_{\corr{\fineElem}}$ and some $\tau \ge 0$ (which will be set later). \corr{These local operators, and those local operators that will be defined later, are well-defined since the resulting linear systems of equations describing \eqref{EQ:hdg_scheme} are square, and \(m=0\) implies \(u_{\corr{\fineElem}} = 0\) and \( \vec q_{\corr{\fineElem}} = 0\) if the LDG-H, EDG, RT-H, or BDM-H methods are selected (see the end of this section for definitions of the respective methods).} In this sense $\localU \colon m \mapsto u_{\corr{\fineElem}}$, $\localQ: m \mapsto \vec q_{\corr{\fineElem}}$ for all $\fineElem \in \mesh_\fine$. Importantly, \eqref{EQ:hdg_flux} can be rewritten as
\begin{equation}\label{EQ:get_penalty}
 \int_{\partial {\corr{\fineElem}}} \vec q_{\corr{\fineElem}} \cdot \Nu v_{\corr{\fineElem}} \ds - \int_{\corr{\fineElem}} \vec q_{\corr{\fineElem}} \cdot \nabla v_{\corr{\fineElem}} \dx = - \underbrace{ \int_{\partial {\corr{\fineElem}}} \tau (u_{\corr{\fineElem}} - m) v_{\corr{\fineElem}} \ds }_{ = s_{\corr{\fineElem}}(m, v) }
\end{equation}
and we have for $\mu \in L^2(\faceSet_\fine)$ that
\begin{equation}\label{EQ:def_local_solver_l2}
 \localU \mu =  \localU \faceProj \mu \qquad \text{ and } \qquad \localQ \mu = \localQ \faceProj \mu,
\end{equation}
where $\faceProj$ is the face-wise $L^2$-orthogonal projection onto $\skeletalSpace_\fine$.

The local solvers $\mathcal V$ and $\vec{\mathcal P}$ for the right-hand side are defined analogously but do not play a crucial role in our analysis. We refer to \cite{CockburnGL09} for further details. If~$m$ is the HDG approximate of the trace of $u$, we have
\begin{equation*}
 \localU m+ {\mathcal V} f = u_\fine \approx u \qquad \text{ and } \qquad \localQ m + {\vec{\mathcal P}} f = \vec q_\fine \approx \vec q.
\end{equation*}
With these abstract concepts at hand, we can characterize an HDG method as a method that seeks $m \in \skeletalSpace_\fine$ such that
\begin{equation}\label{EQ:hdg_approx}
 a(m,\mu) = \int_\Omega f\, \localU \mu \dx
\end{equation}
for all $\mu \in \skeletalSpace_\fine$, where
\begin{equation}\label{EQ:bilinear}
 a(m,\mu) = \int_\Omega A^{-1} \localQ m \cdot \localQ \mu \dx + {\underbrace {\sum_{{\corr{\fineElem}} \in \mesh_\fine} \tau \int_{\partial {\corr{\fineElem}}} (\localU m - m) (\localU \mu - \mu) \ds }_{ = s(m,\mu) }}
\end{equation}
is the HDG bilinear form and $s$ is called \emph{penalty} or \emph{stabilizer}. Again, we highlight that the bilinear form $a$ lives on the fine mesh. Although it is, in principle, also possible to transfer all concepts to the coarse mesh, this is not required in our work.

The different types of HDG methods that are covered by our analysis can be distinguished by the following choices:
\begin{itemize}[leftmargin=*]
 \item For the \emph{hybridized local discontinuous Galerkin} (\textbf{LDG-H}) method, we choose all local spaces to consist of polynomials of degree at most $p$, i.e.,  $M_\face = \mathbb P_p(\face)$, $V_\elem = \mathbb P_p(\elem)$, and $\vec W_\elem = \mathbb P^d_p(\elem)$, \corr{where $\elem \in \mesh_\star$ and $\face \in \faceSet_\star$}. In our work, the parameter $\tau > 0$ is chosen as a face-wise constant such that $\tau h \lesssim 1$. Common choices for such $\tau$ are $\tau = \tfrac1h$ and $\tau = 1$.
 \item The \emph{embedded discontinuous Galerkin} (\textbf{EDG}) method uses the choices of the LDG-H method but additionally requires $\skeletalSpace_\star$ to be overall continuous. 
 \item By setting $\tau=0$ in the LDG-H method and replacing the local bulk spaces with the RT-H and BDM-H spaces,  we obtain the \emph{hybridized Raviart--Thomas} (\textbf{RT-H}) and the \emph{hybridized Brezzi--Douglas--Marini} (\textbf{BDM-H}) methods, respectively.
\end{itemize}
Note that we write $b \lesssim c$ if there exists a constant $C > 0$ (that might depend on the dimension) such that $b \leq C\, c$. However, we track the dependencies on the upper and lower bounds of $A$, $\alpha$ and $\beta$, explicitly.

\subsection{\corr{Auxiliary definitions}}\label{s:prelim}
We start by defining the space of localized, overall continuous, piece-wise affine-linear finite elements on $\omega \subset \Omega$,
\begin{equation*}
 \linElementSpace_\star (\overline\omega) = \{ v \in C(\omega) \colon v|_\elem \in \polynomials_1(\elem) \; \text{ for all } \elem \in \mesh_\star \text{ with } \elem \subset \overline\omega, \text{ and } v|_{\partial \omega} = 0 \}
\end{equation*}
and its trace space
\begin{equation*}
 \traceSpace_\star (\omega) = \gamma_\star \linElementSpace_\star (\omega) = \{ \mu \in C(\faceSet_\star \cap \overline\omega) \colon \mu|_\face \in \polynomials_1(\face) \; \text{ for all } \face \in \faceSet_\star \cap \overline{\omega}, \; \mu|_{\partial \omega} = 0 \},
\end{equation*}
which is a subspace of $\skeletalSpace_\star (\omega)$. Note that $\gamma_\star$ is the trace operator. As above, we abbreviate $\linElementSpace_\star := \linElementSpace_\star (\Omega)$ and $\traceSpace_\star := \traceSpace_\star (\Omega)$. Again, the localized spaces can be globalized by extending their entries by zero. We also emphasize that the space $\traceSpace_\star$ is exactly the first-order EDG test and trial space. Next, we define a mesh-dependent $L^2$-type scalar product (and a corresponding norm) suitable for the above skeletal spaces. These scalar products will be important for the error analysis later on. For functions $m, \mu \in L^2(\faceSet_\star)$, we set
\begin{equation}\label{EQ:skeletalBilForm}
 \langle m, \mu \rangle_\star = \sum_{\elem \in \mesh_\star} \frac{|\elem|}{|\partial \elem|} \int_{\partial \elem} m \mu \ds
\end{equation}
with $\star \in \{\fine,\coarse\}$ as above. This scalar product readily extends to the case in which \corr{$m$ and $\mu$ are replaced by bulk functions $u,v \in V_\star$ (where we consider the integral over the traces of $u,v$). If additionally $u,v \in \linElementSpace_\star$, the definition is commensurate to the $L^2$-scalar product in the bulk}. Further, we define the mesh-dependent norm $\|\cdot\|_\star = \sqrt{\langle \cdot,\cdot \rangle_\star}$ and denote with $\| \cdot \|_0$ the standard $L^2$-norm for bulk functions.	Moreover, we define for any union of (either coarse or fine) elements $\omega \subset \Omega$, the local norm 
\begin{equation*}
 \| m \|^2_{\star,\omega} = \sum_{\substack{\elem' \in \mesh_\star,\\ \elem' \subset \omega}} \frac{|\elem'|}{|\partial \elem'|}  \sum_{\substack{\face \in \faceSet_\star,\\ \face \cap \elem \neq \emptyset}} \int_{\face \cap \partial \elem'} m^2 \ds.
\end{equation*}
Similarly, $\|\cdot\|_{0,\elem}$ denotes the localized $L^2$-norm on $E$. For later use, we define the element patch $\omega_\elem$ for $\elem \in \mesh_\star$ as 
\begin{equation}\label{EQ:element_patch}
 \omega_\elem = \operatorname{int}\left\{ x \in \Omega \mid x \in \elem' \in \mesh_\star \text{ with } \overline \elem' \cap \overline \elem \neq \emptyset \right\}.
\end{equation}
\corr{We also define larger element patches iteratively by
\begin{equation}\label{EQ:patches}
\Nb^1(E):= \omega_E,\qquad\Nb^\ell(E) := \Nb^1(\Nb^{\ell-1}(E)), \quad \ell \in \N, \ell \geq 2.
\end{equation}}%
Further, we denote with $\nabla_\star v \in L^2(\Omega)$ the broken gradient of an element-wise defined function $v \in L^2(\Omega)$ with $v \in H^1(\elem)$ for all $E \in \mesh_\star$. More precisely, we have
$
 (\nabla_\star u)|_\elem = \nabla (u|_\elem)  
$
for all $\elem \in \mesh_\star$.

\subsection{Properties of HDG}

Given the above construction of the local HDG solvers, we can introduce a useful identity for later calculations.
\begin{lemma}\label{LEM:rewrite_prod}
 Let $\xi, \rho \in L^2 (\faceSet_\fine)$. Then
 \begin{equation*}
  \int_{{\corr{\fineElem}}} A^{-1} \localQ \xi \cdot \localQ \rho \dx = - \int_{\partial {\corr{\fineElem}}} (\localQ \rho \cdot \Nu) \, \xi + \tau (\localU \rho - \rho) \,\localU \xi \ds
 \end{equation*}
 for any ${\corr{\fineElem}} \in \mesh_\fine$.
\end{lemma}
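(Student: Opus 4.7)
The plan is to derive the identity by combining the two defining equations \eqref{EQ:hdg_primary} and \eqref{EQ:hdg_flux} of the local solvers in a cross-wise fashion: apply \eqref{EQ:hdg_primary} with skeletal input $\xi$ and test function $\vec p_\elem = \localQ \rho|_\elem$, and apply \eqref{EQ:hdg_flux} with skeletal input $\rho$ and test function $v_\elem = \localU \xi|_\elem$. Both choices are admissible because $\localQ \rho|_\elem \in \vec W_\elem$ and $\localU \xi|_\elem \in V_\elem$ by construction.

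First, I would write \eqref{EQ:hdg_primary} with these choices to obtain
\begin{equation*}
 \int_\elem A^{-1} \localQ \xi \cdot \localQ \rho \dx = \int_\elem \localU \xi \, \ddiv \localQ \rho \dx - \int_{\partial \elem} \xi \, (\localQ \rho \cdot \Nu) \ds.
\end{equation*}
The boundary term already has the correct form, so the main task is to process the volume term involving $\ddiv \localQ \rho$. An elementwise integration by parts converts it into
\begin{equation*}
 \int_\elem \localU \xi \, \ddiv \localQ \rho \dx = \int_{\partial \elem} \localU \xi \, (\localQ \rho \cdot \Nu) \ds - \int_\elem \nabla \localU \xi \cdot \localQ \rho \dx,
\end{equation*}
which is valid because $\localU \xi \in V_\elem$ and $\localQ \rho \in \vec W_\elem$ are smooth within $\elem$.

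Next, I would invoke \eqref{EQ:hdg_flux} with skeletal input $\rho$ and test function $v_\elem = \localU \xi|_\elem$; after rearranging, this yields the identity
\begin{equation*}
 \int_{\partial \elem} (\localQ \rho \cdot \Nu) \, \localU \xi \ds - \int_\elem \localQ \rho \cdot \nabla \localU \xi \dx = -\tau \int_{\partial \elem} (\localU \rho - \rho) \, \localU \xi \ds,
\end{equation*}
which is precisely the combination that appeared after the integration by parts. Substituting this back eliminates the volume term and leaves exactly the claimed expression.

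The calculation is entirely algebraic once the right pairing of defining equations is chosen; no sharp inequalities or projections are involved. The only subtle point to get right is the bookkeeping of which skeletal function plays the role of the input and which local quantity plays the role of the test function in each of \eqref{EQ:hdg_primary} and \eqref{EQ:hdg_flux}, since the identity is not symmetric in $\xi$ and $\rho$ (the trace $\xi$ pairs with the flux $\localQ \rho$, while the penalty factor $\localU \rho - \rho$ pairs with $\localU \xi$). Thus the main (minor) obstacle is simply making sure the two equations are combined in the orientation consistent with the asymmetric right-hand side.
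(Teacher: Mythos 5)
Your proposal is correct and follows essentially the same route as the paper's proof: apply \eqref{EQ:hdg_primary} with input $\xi$ and test function $\localQ\rho$, integrate by parts, and then absorb the two terms containing $\localU\xi$ via \eqref{EQ:hdg_flux} (the paper uses its rearranged form \eqref{EQ:get_penalty}) with input $\rho$ and test function $\localU\xi$. The asymmetric pairing you emphasize is exactly the one used in the paper.
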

\begin{proof}
 Given ${\corr{\fineElem}} \in \mesh_\fine$, we have that
 \begin{align*}
  \int_{\corr{\fineElem}} A^{-1} \localQ \xi \cdot \localQ \rho \dx 
  & = \int_{\corr{\fineElem}} \localU \xi (\ddiv \localQ \rho) \dx - \int_{\partial {\corr{\fineElem}}} \xi\, (\localQ \rho \cdot \Nu) \ds \\
  & = - \int_{\corr{\fineElem}} \nabla \localU \xi \cdot \localQ \rho \dx + \int_{\partial {\corr{\fineElem}}} (\localU \xi - \xi) \localQ \rho \cdot \Nu \ds \\
  & = \int_{\partial {\corr{\fineElem}}} (\localQ \rho \cdot \Nu) \localU \xi  \ds - \int_{\corr{\fineElem}} \localQ \rho \cdot \nabla \localU \xi \dx - \int_{\partial {\corr{\fineElem}}} (\localQ \rho \cdot \Nu) \xi \ds  \\
  & = - s_{\corr{\fineElem}} (\rho, \localU \xi) - \int_{\partial {\corr{\fineElem}}} (\localQ \rho  \cdot \Nu)\, \xi \ds,
 \end{align*}
 where the first equality is \eqref{EQ:hdg_primary} with $\vec p = \localQ \rho$ and the second one is integration by parts. The {last} equation combines the two terms containing $\localU \xi$ to the penalty term by exploiting \eqref{EQ:get_penalty} with $v_{\corr{\fineElem}} = \localU \xi$ {and $\vec q_{\corr{\fineElem}} = \localQ \rho$}.
\end{proof}

The local solvers of all HDG methods of Section~\ref{ss:hdg} also fulfill the following essential conditions for any $\mu \in \skeletalSpace_\fine$ and ${\corr{\fineElem}} \in \mesh_\fine$. These properties are {generalizations of their analogs} shown in~\cite{LuRK22a} and will be heavily used in the following sections.
\begin{itemize}
 \item The trace of the bulk unknown approximates the skeletal unknown, i.e.,
 \begin{equation}
  \| \localU \mu - \mu \|_\fine \lesssim \fine \| A^{-1} \localQ \mu \|_0 \le \tfrac\fine\alpha \| \localQ \mu \|_0. \tag{LS1}\label{EQ:LS1}
 \end{equation}
 \item The operators $\localQ \mu$ and $\localU \mu$ are continuous. That is,
 \begin{equation}
  \hspace*{1.2cm}\| \localQ \mu \|_{0,{\corr{\fineElem}}} \le \beta \| A^{-1} \localQ \mu \|_{0,{\corr{\fineElem}}} \lesssim \beta \fine^{-1} \| \mu \|_{\fine,{\corr{\fineElem}}} \quad \text{ and } \quad \| \localU \mu \|_0 \lesssim \| \mu \|_\fine. \tag{LS2}\label{EQ:LS2}
 \end{equation}
 \item The dual approximation fulfills $\localQ \mu \sim -\nabla_\fine \localU \mu$. More precisely, 
 \begin{equation}
 \begin{aligned}
  \| \localQ \mu + A \nabla_\fine \localU \mu \|_0 & \lesssim \beta \fine^{-1} \| \localU \mu - \mu \|_\fine,\\
  \| A^{-1} \localQ \mu + \nabla_\fine \localU \mu \|_0 & \lesssim \fine^{-1} \| \localU \mu - \mu \|_\fine.
 \end{aligned}
 \tag{LS3}\label{EQ:LS3}
 \end{equation}
 \item We have consistency with linear finite elements: if $w \in \linElementSpace_\fine$, it holds
 \begin{equation}\label{EQ:LS4}
  \localU \gamma_\fine w = w \qquad \text{ and } \qquad \localQ \gamma_\fine w = - A \nabla w. \tag{LS4}
 \end{equation}
 \item The standard spectral properties of the condensed stiffness matrix hold in the sense that
 \begin{equation}
  \alpha \| \mu \|^2_\fine \lesssim a(\mu,\mu) \lesssim \beta \fine^{-2} \| \mu \|^2_\fine. \tag{LS5}\label{EQ:LS6}
 \end{equation}
\end{itemize}

\begin{remark}[{Verification for the RT-H methods}]\label{REM:loc_sol_rth}
 We have that for the RT-H method:
 \begin{itemize}
  \item \eqref{EQ:LS1} follows from tracing the constants in \cite[Lem.\ 3.1]{CockburnDGT2013}.
  \item \eqref{EQ:LS2} follows by tracing the constants in the proof of \cite[Lem.\ 3.3]{CockburnG05}.
  \item \eqref{EQ:LS3} follows by a prove similar to \cite[Lem.\ 3.3]{ChenLX14}, whose direct extension is $\| A^{-1} \localQ \mu + \nabla_\fine \localU \mu \|_0 \lesssim \fine^{-1} \| \localU \mu - \mu \|_\fine$.
  \item \eqref{EQ:LS4} can be proven directly.
  \item \eqref{EQ:LS6} follows by tracing the constants in the proof of \cite[Th.\ 2.2]{Gopalakrishnan03}.
 \end{itemize}
\end{remark}

\begin{remark}[Other methods]
 Proving the properties \eqref{EQ:LS1}--\eqref{EQ:LS6} for the LDG-H method relies on the respective results for the RT-H method (see Remark~\ref{REM:loc_sol_rth}). Using the arguments in \cite{CockburnDGT2013,LuRK22a}, one can recover \eqref{EQ:LS1}--\eqref{EQ:LS6} for LDG-H as well. The results also transfer to EDG since it uses the same local solvers as LDG-H.

 The results also transfer to BDM-H, but we omit the explicit tracking of $\alpha$ and $\beta$ for this method to keep the manuscript readable. Thus, in what follows, all theorems hold for BDM-H  with possibly adapted dependencies on $\alpha$, $\beta$.
\end{remark}

\subsection{Mesh transfer operators}\label{SEC:mesh_transfer}
To define our multiscale strategy, we need mesh transfer operators between the coarse and the fine meshes. We again emphasize that the skeletal spaces are non-nested, so particular strategies are required. 
More precisely, we need an injection operator
$
 \injectionOp \colon \skeletalSpace_\coarse \to \skeletalSpace_\fine,
$
that fulfills the following properties,
\begin{subequations}\label{EQ:inj_prop}
\begin{align}
 \injectionOp m_\coarse &= v|_{\faceSet_\fine} && \text{ if } m_\coarse = v|_{\faceSet_\coarse} \text{ for some } v \in \linElementSpace_\coarse, \label{EQ:injection_identity}\\
 \| m_\coarse \|_{\coarse,\elem} &\eqsim \| \injectionOp m_\coarse \|_{\fine,\elem} && \text{ for } m_\coarse \in {\traceSpace_\coarse}, \corr{E \in \mesh_\coarse}. \label{EQ:norm_equiv_inj}
\end{align}
\end{subequations}
A possible choice is the operator defined in \cite[(3.1)]{LuRK22a}, which fulfills \eqref{EQ:inj_prop} by construction. As the precise definition of $\injectionOp$ is irrelevant to our analysis, we omit a detailed discussion of the construction here. Note that the authors of \cite{LuRK22a} construct several operators with remarkable stability properties. However, they do not build any stable projection operators explicitly. Unfortunately, a computable and stable projection is essential for our approach. Thus, we construct such an operator $\projectionOp\colon \skeletalSpace_\fine \to \skeletalSpace_\coarse$ in the following.

First, we define the linear interpolation
$
 \averagingOp_\coarse\colon \discElementSpace_\coarse \to \linElementSpace_\coarse
$
which uses the averaging in the vertices $\vec x$ of the mesh $\mesh_\coarse$, namely
\begin{equation*}
 \left[\averagingOp_\coarse u\right] (\vec x) = \frac1{{N}_{\vec x}}\sum_{i=1}^{N_{\vec x}} u|_{\elem_i}(\vec x).
\end{equation*}
Here, $N_{\vec x}$ is the number of elements $\elem_i \in \mesh_\coarse$ meeting in the vertex $\vec x$ and $u|_{\elem_i}$ is the restriction of a function $u\in \discElementSpace_\coarse$ to the element $\elem_i$, which is single-valued in $\vec x$. For $\vec x\in \partial\Omega$, we set $\left[\averagingOp_\coarse u\right] (\vec x) = 0$. 
The linear interpolation $\averagingOp_\fine$ is defined similarly, but the averaging is only performed within coarse elements. That is, for every $E \in \mesh_\coarse$, we define
\begin{equation*} 
 \averagingOp_\fine|_\elem \colon \discElementSpace_\fine|_\elem \to \linElementSpace_\fine|_\elem
\end{equation*}
by
\begin{equation*}
 \left[(\averagingOp_\fine\vert_\elem) u\right] (\vec x) = \frac1{n_{\vec x}}\sum_{i=1}^{n_{\vec x}} u|_{\fineElem_i}(\vec x),
\end{equation*}
where now $n_{\vec x}$ denotes the number of fine elements $\fineElem_i \in \mesh_\fine$ with $\fineElem_i \subset E$ that meet in the vertex $\vec x$. Note that $\averagingOp_\fine$ is the sum of the respective contributions on coarse elements and allows for jumps across coarse element boundaries. 

The linear interpolations can now be used in conjunction with the coarse element-wise $L^2$-projection $\corr{\calP^{L^2}_\coarse}$ onto $\discElementSpace_{\coarse}$ 
to define a projection via
\begin{equation}\label{EQ:def_projectionOp}
 \projectionOp\colon \skeletalSpace_\fine \ni \mu \mapsto {\gamma_\coarse}[\averagingOp_\coarse \, \corr{\calP^{L^2}_\coarse} \averagingOp_\fine \, \localU] \mu \in \traceSpace_\coarse \subset \skeletalSpace_\coarse.
\end{equation}
Clearly, $\injectionOp \projectionOp$ is a projection \corr{since \(\projectionOp \colon \skeletalSpace_h \to \traceSpace_\coarse\), and \(\injectionOp\) reproduces each function \(\mu \in \traceSpace_\coarse\) as the trace of its linear bulk extension \(\gamma_\fine \bar v\) with \(\bar v \in \linElementSpace_\coarse\) such that \(\gamma_\coarse \bar v = \mu\), see \eqref{EQ:injection_identity}. A similar argument holds for \(\projectionOp\).}

\begin{lemma}\label{lem:stab}
 We have for all $m \in \skeletalSpace_\fine$ and all $\elem \in \mesh_\coarse$ that
 \begin{align}
  \| (\injectionOp \projectionOp) m \|_{a} & \lesssim \sqrt{ \beta / \alpha } \|A^{-1/2}\localQ m\|_0 \lesssim \sqrt{ \beta / \alpha } \| m \|_{a} , \label{eq:stab_ah}\\
 {\| (\injectionOp \projectionOp) m \|_{\fine,\elem}} & \lesssim 
  \| m \|_{\fine, \omega_\elem}. \label{eq:stab_h}
 \end{align}
 Here, $\| m \|_{\coarse}$ refers to the norm $\| \cdot \|_{\coarse}$ of the restriction of $m$ to the coarse skeleton.
\end{lemma}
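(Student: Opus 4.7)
The plan is to leverage the fact that every function in the image of $\injectionOp \projectionOp$ has the form $\gamma_\fine w$ for some coarse continuous piecewise linear $w \in \linElementSpace_\coarse \subset \linElementSpace_\fine$. Indeed, setting $w := \averagingOp_\coarse \Pi^{L^2}_\coarse \averagingOp_\fine \localU m$, property \eqref{EQ:injection_identity} yields $\injectionOp \projectionOp m = \injectionOp \gamma_\coarse w = \gamma_\fine w$. Applying \eqref{EQ:LS4} to $\gamma_\fine w$ then gives $\localU(\injectionOp \projectionOp m) = w$ (so the stabilizer vanishes because $w$ is globally continuous across faces) and $\localQ(\injectionOp \projectionOp m) = -A\nabla w$. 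Hence $\|\injectionOp \projectionOp m\|_a^2 = \int_\Omega A\nabla w \cdot \nabla w \dx$ and both assertions reduce to bounding $w$ in suitable norms.

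For \eqref{eq:stab_ah}, what remains is to show $\|A^{1/2}\nabla w\|_0 \lesssim \|m\|_a$. I would invoke a standard DG-type quasi-interpolation stability estimate for the chain $\averagingOp_\coarse \Pi^{L^2}_\coarse \averagingOp_\fine$ acting on broken polynomials,
\[
  \|\nabla w\|_0^2 \lesssim \|\nabla_\fine \localU m\|_0^2 + \sum_{F \in \faceSet_\fine} h_F^{-1} \|\jump{\localU m}\|_{L^2(F)}^2,
\]
and then rewrite each jump by the single-valuedness of $m$ as $\jump{\localU m}_F = \jump{\localU m - m}_F$. This allows the jump sum to be controlled, after the usual scaling between skeletal and face integrals, by $h^{-2}\|\localU m - m\|_\fine^2$ and hence by $\|\localQ m\|_0^2$ via \eqref{EQ:LS1}. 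The gradient term is handled by \eqref{EQ:LS3} combined once more with \eqref{EQ:LS1}. Using the uniform ellipticity of $A$ from \eqref{eq:admissibleA}, this chain of estimates yields $\|\injectionOp \projectionOp m\|_a \lesssim \|\localQ m\|_0 \lesssim \|m\|_a$.

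For \eqref{eq:stab_h}, I would first combine \eqref{EQ:norm_equiv_inj} with the standard trace scaling on $\linElementSpace_\coarse$ to get $\|\injectionOp \projectionOp m\|_{\fine, E} \lesssim \|\projectionOp m\|_{\coarse, E} \lesssim \|w\|_{0, E}$. Stability of each operator in the chain (averaging at coarse vertices plus a polynomial inverse estimate for point evaluation) yields $\|w\|_{0, E} \lesssim \sum_{E' \in \mesh_\coarse,\, \overline{E} \cap \overline{E'} \neq \emptyset} \|\localU m\|_{0, E'}$. For $E' = E$, the element-wise local version of \eqref{EQ:LS2} directly gives $\|\localU m\|_{0, E} \lesssim \|m\|_{\fine, E}$. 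For every other coarse element $E' \subset \omega_E$, I would split $\localU m|_{E'} = c_{E'} + (\localU m - c_{E'})$ with $c_{E'}$ the boundary average of $m$ on $\partial E'$, viewed as the coarse boundary. A direct scaling computation gives $|E'|^{1/2}\,|c_{E'}| \lesssim \|m\|_{\coarse, E'}$, while the mean-free remainder is controlled via a Poincar\'e--Friedrichs inequality for broken skeletal spaces (see the appendix) by $H\|\nabla_\fine \localU m\|_{0, E'}$ up to jump terms absorbed through \eqref{EQ:LS1}, and finally by $H\|\localQ m\|_{0, E'}$ through \eqref{EQ:LS3}. Summing over $E' \subset \omega_E$ produces the three contributions on the right-hand side of \eqref{eq:stab_h}.

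The main obstacles will be the DG-stability of the quasi-interpolation chain $\averagingOp_\coarse \Pi^{L^2}_\coarse \averagingOp_\fine$ in the energy bound and the broken-space Poincar\'e--Friedrichs inequality applied to the discontinuous $\localU m$ in the local bound. Both arguments crucially exploit single-valuedness of $m$ on each face, which converts differences of $\localU m$ back into $\|\localU m - m\|_\fine$ and thereby enables the use of \eqref{EQ:LS1} to tie the discrepancy between the bulk and the skeletal trace to $\|\localQ m\|_0$.
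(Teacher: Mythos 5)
Your proposal is correct in substance, and it is worth separating the two estimates. For \eqref{eq:stab_ah} you follow essentially the paper's route: identify $\injectionOp\projectionOp m=\gamma_\fine w$ with $w=\averagingOp_\coarse\Pi^{L^2}_\coarse\averagingOp_\fine\localU m\in\linElementSpace_\coarse\subset\linElementSpace_\fine$, use \eqref{EQ:LS4} so that the stabilizer vanishes and $\|\injectionOp\projectionOp m\|_a\eqsim\|\nabla w\|_0$, and then control $\|\nabla w\|_0$ by the broken gradient and jumps of $\localU m$, converting jumps to $\localU m-m$ via single-valuedness of $m$ and closing with \eqref{EQ:LS1} and \eqref{EQ:LS3}. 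The one place where you wave at a "standard" result is the DG-gradient stability of the \emph{composed two-level} operator $\averagingOp_\coarse\Pi^{L^2}_\coarse\averagingOp_\fine$; this is not a single off-the-shelf citation, and the paper instead inserts the intermediate function $\Pi^{L^2}_\coarse\averagingOp_\fine\localU m$, applies the Oswald estimate on the coarse level only, and pays for the coarse-face jumps with the $L^2$-approximation property of $\Pi^{L^2}_\coarse\averagingOp_\fine$ (its terms $\Xi_1,\Xi_2,\Xi_3$). Your composite claim is true (it is even lossy on coarse faces, with $h_F^{-1}$ in place of the sharp $H^{-1}$ weight, which is harmless because \eqref{EQ:LS1} operates at the fine scale), but a complete write-up would have to reproduce precisely that intermediate-approximation step. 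For \eqref{eq:stab_h} you take a genuinely different route: the paper again uses localized Oswald approximation estimates on the patch, whereas you use plain $L^2$-stability of the averaging chain to reduce to $\sum_{E'\subset\omega_\elem}\|\localU m\|_{0,E'}$, then treat the neighboring elements by splitting off the boundary mean $c_{E'}$ of $m$ (whose contribution is exactly $\|m\|_{\coarse,E'}$) and applying a local broken Poincar\'e--Friedrichs inequality; note that even the Friedrichs variant of Theorem~\ref{TH:skeleton_spaces} with $\Gamma=\partial E'$ works here, since the resulting boundary term $\sqrt{\coarse}\,\|m-c_{E'}\|_{L^2(\partial E')}$ is again bounded by $\|m\|_{\coarse,E'}$, which is admissible on the right-hand side of \eqref{eq:stab_h}. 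This alternative is clean and arguably more transparent about where the term $\|m\|_{\coarse,\omega_\elem}$ comes from; its cost is an additional dependence on the appendix result, which the paper's proof of this lemma avoids.
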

\begin{proof}
 The proof makes heavy use of \cite[(5.10) and Lem.\ 5.2]{LuRK22a}. First, we need a few additional estimates. Writing $\jump{v} = v_+ - v_-$ for the jump between two different values of $v$ at an interface, we deduce from \cite[(5.10) and Lem.\ 5.2]{LuRK22a} the following inequalities.
 \begin{enumerate}
  \item For all $v \in V_\star, \; \star \in \{\fine,\coarse\}$, we have that
  \begin{equation}\label{EQ:Lu22-510}
   \star \| \nabla_\star \averagingOp_\star v - \nabla_\star v \|_0 \lesssim \| \averagingOp_\star v - v \|_0 \lesssim \star \| \nabla_\star v \|_0 + \| \jump{v} \|_\star.
  \end{equation}
  In fact, only the second inequality is \cite[(5.10)]{LuRK22a}, while the first inequality is a standard scaling argument.
  \item For all $\mu \in \skeletalSpace_\fine$, we have that
  \begin{equation}\label{EQ:Lu22-52}
   \| \nabla_\fine (\averagingOp_\fine \, \localU) \mu \|_0 \lesssim \| A^{-1} \localQ \mu \|_0.
  \end{equation}
 \end{enumerate}
 \corr{For polynomial degree $1$, the upper bound in \eqref{EQ:Lu22-510} can be established with the norm of the jump only, but the gradient term is required for higher polynomial degrees.}
 
 We now turn to show the inequality~\eqref{eq:stab_ah}. We observe that for any $m \in \skeletalSpace_{\fine}$
 \begin{align*}
  \| (\injectionOp \projectionOp) m \|_{a} & \overset{\circlearound{A}}= \| A^{-1/2} \localQ (\injectionOp \projectionOp) m \|_0 \overset{\circlearound{B}}\eqsim \| A^{1/2} \nabla [\averagingOp_\coarse \, \corr{\calP^{L^2}_\coarse} \averagingOp_\fine \, \localU] m \|_0 \\
  & \overset{\circlearound{C}}\lesssim \beta^{1/2} \underbrace{ \| \nabla_H [\corr{\calP^{L^2}_\coarse} \averagingOp_\fine \, \localU] m \|_0 }_{ = \Xi_1 } + \beta^{1/2} \coarse^{-1} \underbrace{ \| [\corr{\calP^{L^2}_\coarse} \averagingOp_\fine \,\localU] m - \localU m\|_\coarse}_{ = \Xi_2 } \\
  & \qquad + \beta^{1/2} \underbrace{ \coarse^{-1} \| \localU m - m \|_\coarse }_{ = \Xi_3 }.
 \end{align*}
 Here \circlearound{A} uses $(\injectionOp \projectionOp) m \in \traceSpace_\fine$ and the first equality in \eqref{EQ:LS4}, which imply that $\gamma_\fine \localU (\injectionOp \projectionOp) m = (\injectionOp \projectionOp) m$. Relation \circlearound{B} uses the second equality in \eqref{EQ:LS4} as well as~\eqref{eq:admissibleA} and~\eqref{EQ:a_const}.
 To obtain \circlearound{C}, we first use \eqref{EQ:Lu22-510} (for $\star = \coarse$) and the fact that
 \begin{equation*}
  \| \jump{ [\corr{\calP^{L^2}_\coarse} \averagingOp_\fine \, \localU] m } \|_\coarse \le 2 \| [\corr{\calP^{L^2}_\coarse} \averagingOp_\fine \, \localU] m - m \|_\coarse,
 \end{equation*}
 which in turn uses that $m = 0$ on $\partial \Omega$. Note that the fine-scale objects~$\localU m$ and~$m$ are only evaluated on the coarse skeleton in the above equations. Next, we bound the individual terms $\Xi_1,\,\Xi_2$, and $\Xi_3$. For $\Xi_1$, we have
 \begin{align*}
  \Xi_1 & \le \| ( \nabla_\coarse [\corr{\calP^{L^2}_\coarse} \averagingOp_\fine \, \localU] m - \nabla_\fine [\averagingOp_\fine \, \localU] m ) \|_0 + \| \nabla_\fine [\averagingOp_\fine \, \localU] m \|_0 \\
  & \lesssim \| \nabla_\fine \averagingOp_\fine \, \localU m \|_0 \overset{\eqref{EQ:Lu22-52}}\lesssim \| A^{-1} \localQ m \|_0  \lesssim \alpha^{-1/2} \| m \|_a ,
 \end{align*}
 where the second inequality is \cite[Lem.\ 1.58]{PietroErn12} stating that on a single element $\elem \in \mesh_\coarse$, $\| \nabla (\corr{\calP^{L^2}_\coarse} v - v) \|_{0,E} \lesssim \| \nabla v \|_{0,E}$ for all $v \in H^1(\elem)$. Additionally, using the approximation properties of $\averagingOp_\fine$ (see~\eqref{EQ:Lu22-510}) and $\corr{\calP^{L^2}_\coarse}$ we estimate
 \begin{align*}
  \Xi_2 & \lesssim \| [\corr{\calP^{L^2}_\coarse} \averagingOp_\fine \, \localU] m - [\averagingOp_\fine \, \localU] m\|_0 + \| [\averagingOp_\fine \, \localU] m - \localU m\|_0 \\
  & \lesssim \coarse \| \nabla_\fine [\averagingOp_\fine \, \localU] m \|_0 + \fine \| \nabla_\fine \localU m \|_0 + \| \localU m - m \|_\fine,
 \end{align*}
 which, in turn, can be bounded by $\coarse \alpha^{-1/2} \| A^{-1/2} \localQ m \|_0$
 if one applies \eqref{EQ:Lu22-52}, \eqref{EQ:LS3}, and \eqref{EQ:LS1}.
 Last, we use the definitions of $\|\cdot\|_\fine$ and $\|\cdot\|_\coarse$ to write
 \begin{equation*}
  \Xi_3 \le (\coarse\fine)^{-1/2} \; \| \localU m - m \|_\fine \overset{\eqref{EQ:LS1}}\lesssim \sqrt{\tfrac\fine\coarse} \; \| A^{-1} \localQ m \|_0 \overset{\fine < \coarse}\lesssim \| A^{-1} \localQ m \|_0 \lesssim \alpha^{-1/2} \| m \|_a.
 \end{equation*}
 Altogether, this proves~\eqref{eq:stab_ah}.

 Next, we prove \eqref{eq:stab_h} by observing that
 \begin{equation}\label{EQ:avg_bound}
  \| \averagingOp_\coarse \localU m \|_{\fine, \elem} \lesssim \| \localU m \|_{0,\omega_\elem}
 \end{equation}
 for all $\elem \in \mesh_\coarse$. This allows us to estimate
 \begin{multline*}
  \| (\injectionOp \projectionOp) m \|_{\fine,\elem} = \| \averagingOp_\coarse \corr{\calP^{L^2}_\coarse} \averagingOp_\fine \localU m \|_{0,\elem} \lesssim \| \corr{\calP^{L^2}_\coarse} \averagingOp_\fine \localU m \|_{0,\omega_\elem} \\
  \le \| \averagingOp_\fine \localU m \|_{0,\omega_\elem} \lesssim \| \localU m \|_{0,\omega_\elem} \lesssim \| m \|_{\fine,\omega_\elem},
 \end{multline*}
 where the first and third inequalities are \eqref{EQ:avg_bound}, the second inequality is the stability of the $L^2$-projection, and the last inequality is \eqref{EQ:LS2}.
\end{proof}

Next, we prove an interpolation-type estimate for the concatenation $\injectionOp \projectionOp$ that will be an essential ingredient to show approximation properties of the considered multiscale method.

\begin{lemma}\label{LEM:uih_error}
  We have for all $m \in \skeletalSpace_\fine$ and all $\elem \in \mesh_\coarse$ that
  \begin{equation*}
   \| \localU m - \localU \injectionOp \projectionOp m \|_{0, \elem} \lesssim \| m - \injectionOp \projectionOp m \|_{h, \elem} \lesssim \coarse 
   \| A^{-1} \localQ m \|_{0,\omega_\elem}.
  \end{equation*}
\end{lemma}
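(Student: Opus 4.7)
The first inequality is immediate: since the local solver $\localU$ is element-wise on $\mesh_\fine$, the localized analogue of \eqref{EQ:LS2} applied to $\mu = m - \injectionOp\projectionOp m$ on each fine element $\elem' \subset \elem$ and summed over those elements gives $\| \localU(m - \injectionOp\projectionOp m) \|_{0,\elem} \lesssim \| m - \injectionOp\projectionOp m \|_{\fine,\elem}$.

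For the second inequality, the plan is to exploit the explicit form of $\injectionOp\projectionOp$. Since $w^* := \averagingOp_\coarse \Pi^{L^2}_\coarse \averagingOp_\fine \localU m$ belongs to $\linElementSpace_\coarse$ by construction, property \eqref{EQ:injection_identity} yields $\injectionOp\projectionOp m = w^*|_{\faceSet_\fine}$. A triangle inequality then splits
\begin{equation*}
 \| m - \injectionOp\projectionOp m \|_{\fine,\elem} \le \| m - \localU m \|_{\fine,\elem} + \| \localU m - w^* \|_{\fine,\elem}.
\end{equation*}
The first term is bounded by $\fine\|\localQ m\|_{0,\elem}$ via the localized version of \eqref{EQ:LS1}. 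For the second term, since $\localU m$ and $w^*$ are both piecewise polynomial on $\mesh_\fine$, an inverse trace inequality together with $|\elem'|/|\partial\elem'|\sim\fine$ converts the skeleton norm to a bulk $L^2$-norm: $\|\localU m - w^*\|_{\fine,\elem} \lesssim \|\localU m - w^*\|_{0,\elem}$. I then estimate the latter via a telescoping chain through $u_1 := \averagingOp_\fine \localU m$ and $u_2 := \Pi^{L^2}_\coarse u_1$, closely mirroring the treatment of $\Xi_1$ and $\Xi_2$ in the proof of Lemma~\ref{lem:stab}: the localized \eqref{EQ:Lu22-510a} (with $\star=\fine$), together with the bound $\|\jump{\localU m}\|_{\fine,\omega_\elem} \lesssim \|\localU m - m\|_{\fine,\omega_\elem}$ and \eqref{EQ:LS1}, handles the step $\localU m \to u_1$ with cost $\fine\|\localQ m\|_{0,\omega_\elem}$; the $H^1$-stability of $\Pi^{L^2}_\coarse$ from \cite[Lem.~1.58]{PietroErn12} combined with \eqref{EQ:Lu22-52} handles $u_1 \to u_2$ with cost $\coarse\|\localQ m\|_{0,\omega_\elem}$; and \eqref{EQ:Lu22-510a} (with $\star=\coarse$) applied to $u_2$ handles the final coarse averaging $u_2 \to w^*$.

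The main obstacle is this last step, which requires controlling $\|\jump{u_2}\|_{\coarse,\omega_\elem}$ on the coarse skeleton. I will bound this by $2\|u_2 - m\|_{\coarse,\omega_\elem}$ and then split further: $\|u_2 - \localU m\|_{\coarse,\omega_\elem}$ is controlled by a localized version of the $\Xi_2$-estimate from Lemma~\ref{lem:stab}, while $\|\localU m - m\|_{\coarse,\omega_\elem}$ is handled by comparing the coarse- and fine-skeleton norms (an inverse trace estimate yields $\|v\|_{\coarse,\omega_\elem}\lesssim\sqrt{\coarse/\fine}\,\|v\|_{\fine,\omega_\elem}$ for piecewise polynomial $v$), followed by \eqref{EQ:LS1}; the resulting factor $\sqrt{\coarse\fine}$ is absorbed into $\coarse$ using $\fine\le\coarse$. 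Summing the three telescoping contributions and again using $\fine\le\coarse$ gives $\|\localU m - w^*\|_{0,\elem}\lesssim\coarse\|\localQ m\|_{0,\omega_\elem}$, which concludes the proof.
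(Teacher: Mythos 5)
Your argument is essentially correct, but it takes a genuinely different route from the paper. The paper's proof never telescopes through the factors of $\projectionOp$: it uses the lifting operator $\liftingOp$ to replace $\| m - \injectionOp\projectionOp m\|_{\fine,E}$ by $\|(\idOp - \liftingOp\injectionOp\projectionOp\gamma_\fine)z\|_{0,E}$ with $z=\liftingOp m\in H^1(\Omega)$, observes that this operator annihilates affine functions, and then invokes the Bramble--Hilbert lemma on a reference patch together with the stability estimate \eqref{eq:stab_h} to produce the factor $\coarse\|\nabla z\|_{0,\omega_E}\eqsim \coarse\|\localQ m\|_{0,\omega_E}$. Your proof instead unwinds $\injectionOp\projectionOp m$ as the fine trace of $w^*=\averagingOp_\coarse\Pi^{L^2}_\coarse\averagingOp_\fine\localU m\in\linElementSpace_\coarse$ (a correct use of \eqref{EQ:injection_identity}) and tracks the powers of $\fine$ and $\coarse$ by hand through the chain $\localU m\to\averagingOp_\fine\localU m\to\Pi^{L^2}_\coarse\averagingOp_\fine\localU m\to w^*$, which amounts to rerunning the proof of Lemma~\ref{lem:stab} while keeping the extra factor $\coarse$ that the paper only alludes to there. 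What the paper's route buys is brevity and extensibility (polynomial reproduction of higher degree would immediately yield higher powers of $\coarse$), at the cost of needing $\liftingOp$ and Appendix~\ref{SEC:injection_op}; your route is more elementary, avoids the lifting operator for this lemma entirely, and makes the origin of each power of $\coarse$ explicit, at the cost of length. Three points you should make explicit to close the argument at the paper's level of rigor: (i) the bound $\|\jump{u_2}\|_{\coarse,\omega_E}\le 2\|u_2-m\|_{\coarse,\omega_E}$ uses $m=0$ on $\partial\Omega$ for boundary faces, exactly as in Lemma~\ref{lem:stab}; (ii) converting the coarse-skeleton norm of a \emph{fine-mesh} piecewise polynomial to a bulk norm costs a factor $\sqrt{\coarse/\fine}$ via the discrete trace inequality, so in the ``localized $\Xi_2$'' step you must use, as you do for $\Xi_3$, that the functions being converted are already $O(\fine)$-small in $L^2$ so that $\sqrt{\coarse/\fine}\cdot\fine\le\coarse$ --- the naive inequality $\|v\|_{\coarse}\lesssim\|v\|_{0}$ is false for such $v$; (iii) the intermediate applications of \eqref{EQ:Lu22-510a} and the localized version of \eqref{EQ:Lu22-52} enlarge the domain by an extra layer of (fine or coarse) elements, so the right-hand side naturally lands on a slightly larger patch than $\omega_E$; this is harmless but should be stated (the paper's scaling argument sidesteps it by working on the reference patch).
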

For the proof of this lemma, we heavily rely on a specific lifting operator
\begin{equation*}
 \liftingOp: \skeletalSpace_\fine \to { \contdElementSpace \cap H^1(\Omega) },
\end{equation*}
where $\contdElementSpace$ is a finite-dimensional space of piecewise polynomials, and $\liftingOp$ fulfills
\begin{subequations}
\begin{align}
 \mu & = \faceProj \gamma_\fine \liftingOp \mu \label{EQ:proj_prop},\\ 
 \| \liftingOp \mu \|_{0,\elem} & \eqsim \| \mu \|_{\fine, \elem} \label{EQ:s_norms}, \\ 
 \| \nabla \liftingOp \mu \|_{0,\elem} & \eqsim \| A^{-1} \localQ \mu \|_{0,\elem} \label{EQ:norm_equiv}.
\end{align}
\end{subequations}
for all $\mu \in \skeletalSpace_\fine, \; \elem \in \mesh_\coarse$. Again, $\faceProj$ is the orthogonal projection onto $\skeletalSpace_\fine$ with respect to $\langle\cdot,\cdot\rangle_\fine$. The operator $\liftingOp$ is rigorously defined and analyzed in Appendix \ref{SEC:injection_op}.
\begin{proof}[Proof of Lemma \ref{LEM:uih_error}]
  First, we use \eqref{EQ:LS2} and \eqref{EQ:s_norms} to deduce 
  \begin{equation*}
    \| \localU m - \localU \injectionOp \projectionOp m \|_{0,\elem} \lesssim \| m - \injectionOp \projectionOp m \|_{\fine,\elem} \eqsim \| \liftingOp m - \liftingOp \injectionOp \projectionOp m \|_{0,\elem}  =: (\bigstar) .
  \end{equation*}
  Notably, \corr{the equality~\eqref{EQ:proj_prop}, the definition of $\projectionOp$ in~\eqref{EQ:def_projectionOp}, and the property of $\localU$ in~\eqref{EQ:def_local_solver_l2}} imply
  \begin{equation*}
   (\bigstar) = \| \liftingOp m - \liftingOp \injectionOp \projectionOp {\faceProj} \gamma_\fine \liftingOp m \|_{0,\elem}  = \| \liftingOp m - \liftingOp \injectionOp \projectionOp \gamma_\fine \liftingOp m \|_{0,\elem}.
  \end{equation*}
  Next, we set $z = \liftingOp m\in H^1(\Omega)$ and consider a coarse element $\elem \in \mesh_\coarse$ with \corr{element patch $\omega_\elem$, see~\eqref{EQ:element_patch}}. We want to use the standard scaling argument in a non-standard way. That is, we want to prove that
  \begin{equation}\label{EQ:proof_target}
   (\bigstar) = \| (\idOp - \liftingOp \injectionOp \projectionOp \gamma_\fine) z \|_{0,\elem} \lesssim \coarse^{d/2} \| \hat \nabla \hat z \|_{0,\hat \omega_{\hat \elem}} \lesssim \coarse \| \nabla z \|_{0,\omega_\elem},
  \end{equation}
  where $\hat \omega_{\hat \elem}$ is a reference patch around the reference element $\hat \elem$, and operators with a hat act on the reference patch. Moreover, $\hat z = z \circ \Phi^{-1}$ if $\Phi \colon \hat \omega_\elem \to \omega_\elem$ is a bijective, affine-linear mapping. If \eqref{EQ:proof_target} holds, we can deduce the result by recalling that $z = \liftingOp m$ and $\| \nabla \liftingOp m \|_{0,\elem'} \lesssim \| A^{-1} \localQ m \|_{0,\elem'}$  for any $\elem' \in \mesh_\coarse$ according to \eqref{EQ:norm_equiv}.

  The equality in \eqref{EQ:proof_target} is evident, and the second inequality is the standard scaling argument for the $H^1$-semi-norm of $z$. Therefore, it remains to show the first inequality in~\eqref{EQ:proof_target}:
  we define the operator
  \begin{equation*}
   \mathcal G_{\elem} \colon H^1(\hat \omega_{\hat\elem}) \ni \hat z \mapsto \big((\idOp - \liftingOp \injectionOp \projectionOp \gamma_\fine) (\hat z \circ \Phi) \big)\vert_E \in L^2(\elem),
  \end{equation*}
  for which we want to employ the Bramble--Hilbert lemma \cite[Thm.\ 28.1]{Ciarlet91}. Obviously, we have that $\mathcal G_\elem(p) = 0$ if $p$ is a polynomial of degree at most one. That is, we need to show that $\| \mathcal G_\elem \| \lesssim \coarse^{d/2}$ to obtain the first inequality in \eqref{EQ:proof_target}. Clearly, we have that $\| \idOp (\hat z \circ \Phi) \|_{0,\elem} \lesssim \coarse^{d/2} \| \hat z \|_{0,\hat \elem}$. Moreover,
  \begin{multline}\label{eq:proof_stab}
   \| \liftingOp \injectionOp \projectionOp \gamma_\fine  z \|_{0,\elem} \hspace{-.1cm}\overset{\eqref{EQ:s_norms}}\eqsim\hspace{-.1cm} \| \injectionOp \projectionOp \gamma_\fine  z \|_{\fine,\elem} 
   \overset{\eqref{eq:stab_h}}\lesssim 
   \| \gamma_\fine  z \|_{{\fine,\omega_\elem}} \\
   \lesssim \coarse^{d/2} \| \hat \gamma_\fine \hat z \|_{L^2({\partial \hat \omega_{\hat \elem}})} \lesssim \coarse^{d/2} \| \hat z \|_{H^1(\hat \omega_{\hat \elem})}
  \end{multline}
  by the standard scaling argument with $z = \hat z \circ \Phi$.
\end{proof}

\section{Prototypical skeletal multiscale approximation}\label{s:msmethod}
In this section, we state and analyze a prototypical multiscale approach to discretize problem \eqref{eq:PDEell}. Therefore, we introduce a multiscale space on the skeleton, which is then used as discretization space in a Galerkin fashion.

The idea of the method is built upon suitable corrections of coarse functions by finer ones. The kernel space gives an appropriate fine space for such corrections,
\begin{equation}\label{EQ:kerSpace}
 \kerSpace := \{ \mu \in \skeletalSpace_{\fine} \colon \injectionOp\projectionOp \mu = 0\} = \{ \mu \in \skeletalSpace_{\fine} \colon \projectionOp \mu = 0\},
\end{equation}
where $\projectionOp$ and $\injectionOp$ are the projection and injection operator \corr{as defined in~\eqref{EQ:def_projectionOp} and~\eqref{EQ:norm_equiv_inj}}, respectively. We define a so-called \emph{correction operator} $\corOp\colon \skeletalSpace_\fine \to \kerSpace$ as follows: for any $\xi \in \skeletalSpace_\fine$, the function $\corOp\xi \in \kerSpace$ solves 
\begin{equation}\label{eq:corOp}
 a(\corOp \xi, \eta) = a(\xi, \eta)
\end{equation}
for all $\eta \in \kerSpace$. Due to the coercivity of $a$, the operator $\corOp$ is uniquely defined and stable. Alternatively, the operator $\corOp$ can be defined based on an equivalent saddle point problem (see, e.g., the derivations in~\cite[Ch.~2.3.2]{Maier20}), which is beneficial mainly for implementation purposes. The alternative characterization seeks the pair $(\corOp \xi, \lambda_\coarse) \in \skeletalSpace_{\fine} \times \traceSpace_{\coarse}$ that solves
\begin{equation}\label{eq:saddlepointCor}
\begin{aligned}
 a(\corOp \xi, \eta)\qquad\, &+& \langle\lambda_\coarse, \projectionOp\eta\rangle_\coarse \quad&=\quad a(\xi,\eta),\\
 \langle \projectionOp\corOp \xi,\mu_\coarse\rangle_\coarse\; && &=\quad 0
\end{aligned}
\end{equation}
for all $(\eta,\mu_\coarse) \in \skeletalSpace_\fine \times \traceSpace_\coarse$. Note that this definition does not require an explicit characterization of the kernel space $\kerSpace$ (e.g., in the form of a suitable basis).

Next, we define a multiscale space based on the newly defined correction operator,
\begin{equation}\label{eq:defMSspace}
 \msSpace := (\idOp - \corOp) \skeletalSpace_{\fine} = \kerSpace^{\perp_a},
\end{equation}
which is by definition the orthogonal complement of $\kerSpace$ with respect to $a$. Note that this space is actually a coarse-scale space (thus the index $\coarse$) in the sense that it can be generated by the coarse space $\traceSpace_{\coarse}$ and it holds that $\dim \msSpace = \dim \traceSpace_\coarse$. To see this, we first note that $\corOp(\idOp - \injectionOp\projectionOp)\xi =  (\idOp - \injectionOp\projectionOp)\xi$, because $\corOp\vert_{\kerSpace} = \idOp\vert_{\kerSpace}$.
Therefore, $(\idOp - \corOp)\xi = (\idOp - \corOp)\injectionOp\projectionOp\xi$.
Moreover, we have that
\begin{equation*}
 \msSpace = (\idOp - \corOp) \skeletalSpace_{\fine} = (\idOp - \corOp)\injectionOp\projectionOp\skeletalSpace_\fine =  (\idOp - \corOp)\injectionOp\traceSpace_{\coarse}
\end{equation*}
is generated by the coarse space $\traceSpace_\coarse$ only.
\corr{That is, $(\idOp - \corOp)\injectionOp\colon \traceSpace_{\coarse} \to \msSpace$ is a bijection with inverse $\projectionOp$. This follows from the definition of $\projectionOp$ (see~\eqref{EQ:def_projectionOp}), the properties of $\injectionOp$ (see~\eqref{EQ:norm_equiv_inj}), and the definition of $\kerSpace$ (see~\eqref{EQ:kerSpace}). }
In the following, we abbreviate $\msOp = (\idOp - \corOp)\injectionOp\colon \traceSpace_\coarse \to \skeletalSpace_{\fine}$. Note that, if $\{b_{\coarse,i}\}_i$ is a (nodal) basis of the coarse space~$\traceSpace_{\coarse}$, then $\{\tilde b_{\coarse,i}\}_i$ with
\begin{equation}\label{eq:msBasisFunctions}
 \tilde b_{\coarse,i} := \msOp b_{\coarse,i}
\end{equation}
defines a basis of $\msSpace$. For the practical localization of the correction (cf.~Section~\ref{ss:loc} below), an equivalent formulation based on so-called \emph{element corrections} is beneficial. Therefore, we decompose the correction operator in~\eqref{eq:saddlepointCor} into element-wise contributions. Let $\xi_\coarse \in \traceSpace_\coarse$ and $\elem \in \mesh_\coarse$ a coarse mesh element. The pair $(\corOp_\elem \xi_\coarse, \lambda_{\coarse,\elem}) \in \skeletalSpace_{\fine} \times \traceSpace_{\coarse}$ solves
\begin{equation}\label{eq:saddlepointCorEle}
\begin{aligned}
 a(\corOp_\elem \xi_\coarse, \eta)\qquad\, &+& \langle\lambda_{\coarse,\elem}, \projectionOp\eta\rangle_\coarse \quad&=\quad a_\elem(\injectionOp\xi_\coarse,\eta),\\
 \langle \projectionOp\corOp_\elem \xi_\coarse,\mu_\coarse\rangle_\coarse\; && &=\quad 0
\end{aligned}
\end{equation}
for all $\eta \in \skeletalSpace_{\fine}$ and $\mu_\coarse \in \traceSpace_{\coarse}$. Equivalently, the element-wise correction $\corOp_\elem \xi_\coarse \in\kerSpace$ is characterized by
\begin{equation}\label{eq:corOpEle}
 a(\corOp_\elem \xi_\coarse, \eta) = a_\elem(\injectionOp\xi_\coarse, \eta)
\end{equation}
for all $\eta \in \kerSpace$. Here, $a_\elem$ denotes the restriction of $a$ to a coarse element $\elem \in \mesh_\coarse$. By linearity arguments, we have $\corOp \injectionOp \xi_\coarse = \sum_{\elem \in \mesh_\coarse} \corOp_\elem \xi_\coarse$.

The non-localized prototypical multiscale method now seeks $\tilde m_\coarse \in \msSpace$ such that
\begin{equation}\label{eq:idealMethod}
 a(\tilde m_\coarse,\tilde \mu_\coarse) = \int_\Omega f\,\localU \tilde \mu_\coarse \dx \qquad \text{for all }\tilde \mu_\coarse \in \msSpace.
\end{equation}
Note that we have stability of $\tilde m_\coarse$ in the sense that
\begin{equation}\label{eq:stabTm}
	\|\tilde m_\coarse\|_{a} \lesssim \alpha^{-1/2} \|f\|_0
\end{equation}
due to \eqref{EQ:LS2}, \eqref{EQ:LS6}, and using $\tilde \mu_\coarse = \tilde m_\coarse$ in~\eqref{eq:idealMethod}.

\section{Error analysis for the prototypical method}\label{s:err}
The following theorem quantifies the error of the prototypical multiscale method in terms of the coarse mesh parameter $\coarse$.
\begin{theorem}[Error of the prototypical method]\label{t:errIdeal}
 Let $f \in L^2(\Omega)$ and let $m \in \skeletalSpace_{\fine}$ be the solution of~\eqref{EQ:hdg_approx}. Then
 \begin{equation}\label{eq:errIdeal}
  \alpha^{1/2} \| \localU m - \localU \tilde m_\coarse \|_0 + \| A^{-1/2} (\localQ m - \localQ \tilde m_\coarse) \|_0 \lesssim \|m - \tilde m_\coarse\|_a \lesssim \alpha^{-1/2}\coarse\, \|f\|_0.
 \end{equation}
\end{theorem}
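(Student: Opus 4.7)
The plan is to exploit the fact that $\tilde m_\coarse$ is the $a$-orthogonal Galerkin projection of $m$ onto $\msSpace$, identify the error with the correction $\corOp m$, and then use Lemma~\ref{LEM:uih_error} applied to $\corOp m \in \kerSpace$ to extract the factor~$\coarse$.

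For the \emph{first inequality} in \eqref{eq:errIdeal}, I would write $e := m - \tilde m_\coarse \in \skeletalSpace_\fine$ and note that $a(e,e) \geq \beta^{-1} \|\localQ e\|_0^2$ by the definition of $a$ in~\eqref{EQ:bilinear}, so $\|\localQ e\|_0 \lesssim \|e\|_a$. Combining the continuity bound $\|\localU e\|_0 \lesssim \|e\|_\fine$ from~\eqref{EQ:LS2} with the spectral lower bound $\|e\|_\fine^2 \lesssim a(e,e)$ from~\eqref{EQ:LS6} yields $\|\localU e\|_0 \lesssim \|e\|_a$, which proves the first $\lesssim$.

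For the \emph{second inequality}, I would first observe that $m$ solves~\eqref{EQ:hdg_approx} for all test functions in $\skeletalSpace_\fine$ and hence in particular for $\tilde \mu_\coarse \in \msSpace \subset \skeletalSpace_\fine$. Subtracting~\eqref{eq:idealMethod} gives Galerkin orthogonality $a(m - \tilde m_\coarse, \tilde \mu_\coarse) = 0$ for all $\tilde \mu_\coarse \in \msSpace$, so $\tilde m_\coarse$ is the $a$-orthogonal projection of $m$ onto $\msSpace$. By the definition~\eqref{eq:corOp} of $\corOp$, the function $\corOp m$ is the $a$-orthogonal projection of $m$ onto $\kerSpace$, and since $\msSpace = \kerSpace^{\perp_a}$ by~\eqref{eq:defMSspace}, we obtain the identity
\begin{equation*}
 m - \tilde m_\coarse = \corOp m, \qquad \text{and in particular}\qquad \|m - \tilde m_\coarse\|_a = \|\corOp m\|_a.
\end{equation*}

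The core of the argument is to show $\|\corOp m\|_a \lesssim \coarse\|f\|_0$. Since $\corOp m \in \kerSpace \subset \skeletalSpace_\fine$, I would test~\eqref{EQ:hdg_approx} with $\mu = \corOp m$ and use~\eqref{eq:corOp} together with $\corOp m \in \kerSpace$ to write
\begin{equation*}
 \|\corOp m\|_a^2 = a(\corOp m, \corOp m) = a(m,\corOp m) = \int_\Omega f \,\localU \corOp m \dx \leq \|f\|_0 \,\|\localU \corOp m\|_0.
\end{equation*}
Now the decisive step: because $\corOp m \in \kerSpace$, we have $\injectionOp \projectionOp \corOp m = 0$, so Lemma~\ref{LEM:uih_error} applied element-wise gives
\begin{equation*}
 \|\localU \corOp m\|_{0,\elem} = \|\localU \corOp m - \localU \injectionOp \projectionOp \corOp m\|_{0,\elem} \lesssim \coarse \|\localQ \corOp m\|_{0,\omega_\elem}
\end{equation*}
for every $\elem \in \mesh_\coarse$. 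Squaring and summing, and invoking the finite overlap of the vertex patches $\omega_\elem$, yields $\|\localU \corOp m\|_0 \lesssim \coarse \|\localQ \corOp m\|_0 \lesssim \coarse \|\corOp m\|_a$. Substituting back and dividing through by $\|\corOp m\|_a$ completes the proof.

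The main obstacle is precisely the last step: to gain a full power of $\coarse$ one must avoid estimating $\|\localU \corOp m\|_0$ by brute force (which would only give a factor of~$\fine$ via~\eqref{EQ:LS2}) and instead leverage the approximation-type inequality of Lemma~\ref{LEM:uih_error}, which crucially requires the kernel property $\injectionOp\projectionOp \corOp m = 0$ of the corrector. All remaining manipulations reduce to combining the abstract properties \eqref{EQ:LS2}--\eqref{EQ:LS6} of the local HDG solvers with standard Galerkin orthogonality.
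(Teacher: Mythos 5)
Your proposal is correct and follows essentially the same route as the paper: the first inequality via \eqref{EQ:LS2}, \eqref{EQ:LS6} and the definition of $a$, and the second via Galerkin orthogonality, testing with the error, and applying Lemma~\ref{LEM:uih_error} through the kernel property $\injectionOp\projectionOp(m-\tilde m_\coarse)=0$. The only cosmetic difference is that you make the identification $m-\tilde m_\coarse=\corOp m$ explicit, whereas the paper records the equivalent fact $m-\tilde m_\coarse\in\kerSpace$ in Remark~\ref{rem:zeroProj}.
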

\begin{remark}\label{rem:zeroProj}
 The operator $\injectionOp\projectionOp$ produces the same result when applied to the solutions $m$ and $\tilde m_\coarse$, respectively. In that sense, the \emph{coarse-scale contributions} of the two functions are identical. This is an important property to prove Theorem~\ref{t:errIdeal}. Indeed, by the construction of the correction operator in~\eqref{eq:corOp} and the multiscale space in~\eqref{eq:defMSspace}, we have that
 \begin{equation*}
  a(\tilde \mu_\coarse,\eta) = 0
 \end{equation*}
 for all $\tilde \mu_\coarse \in \msSpace$ and $\eta \in \kerSpace$. 	In particular, the above property uniquely determines the space $\kerSpace$.	Since Galerkin orthogonality implies that
 \begin{equation*}
  a(m - \tilde m_\coarse, \tilde \mu_\coarse) = 0
 \end{equation*}
 for all $\tilde \mu_\coarse \in \msSpace$, we deduce from the symmetry of $a$ that $m - \tilde m_\coarse \in \kerSpace$ and, thus, $\injectionOp\projectionOp(m - \tilde m_\coarse) = 0$.
\end{remark}
\begin{proof}[Proof of Theorem~\ref{t:errIdeal}]
 The first inequality of \eqref{eq:errIdeal} follows from the linearity of $\localU$ and $\localQ$,
 \begin{equation}\label{eq:udiff}
  \| \localU m - \localU \tilde m_\coarse \|_0 = \| \localU (m - \tilde m_\coarse ) \|_0 \overset{\eqref{EQ:LS2}}\lesssim 
  \| m - \tilde m_\coarse \|_\fine \overset{\eqref{EQ:LS6}}\lesssim \alpha^{-1/2} \| m - \tilde m_\coarse \|_a
 \end{equation}
 and the the definition of $\|\cdot\|_a$, i.e., 
 \begin{align*}
  \| A^{-1/2} (\localQ m - \localQ \tilde m_\coarse) \|^2_0 &= \| A^{-1/2} \localQ (m - \tilde m_\coarse) \|^2_0\\ &= \int_\Omega A^{-1} [\localQ (m - \tilde m_\coarse)]^2 \dx \le \| m - \tilde m_\coarse \|^2_a.
 \end{align*}
 Since $\tilde m_\coarse \in \msSpace \subset \skeletalSpace_\fine$, it is a valid test function for both~\eqref{eq:idealMethod} and~\eqref{EQ:hdg_approx}. This implies that $a(m - \tilde m_\coarse, \tilde m_\coarse) = 0$. This and the symmetry of $a$ can be used to deduce that
 \begin{equation}\label{eq:ineqerr}
 \begin{aligned}
  \| m - \tilde m_\coarse \|_a^2 & = a(m - \tilde m_\coarse, m - \tilde m_\coarse) = a(m,m - \tilde m_\coarse)\\
  & = \int_\Omega f\,\localU (m - \tilde m_\coarse) \dx \leq \|f\|_{0}\,\|\localU (m - \tilde m_\coarse) \|_0\\&
  = \|f\|_{0}\,\|\localU (m - \tilde m_\coarse) - \localU \injectionOp\projectionOp(m - \tilde m_\coarse) \|_0\\&
  \lesssim \|f\|_0\, \alpha^{-1/2}\coarse\,\|m - \tilde m_\coarse \|_a,
 \end{aligned}
 \end{equation}
 where we use that $m$ solves~\eqref{EQ:hdg_approx} to get the third equality. The fourth equality uses that $\injectionOp \projectionOp (m - \tilde m_\coarse) = 0$ (cf.~Remark~\ref{rem:zeroProj}), and the final inequality uses Lemma~\ref{LEM:uih_error} and 
 \begin{equation*}
   \|A^{-1}\localQ(m - \tilde m_\coarse)\|_0 \leq  \alpha^{-1/2}\|m - \tilde m_\coarse \|_a.
 \end{equation*}
 Dividing both sides of the inequality~\eqref{eq:ineqerr} by $\| m - \tilde m_\coarse \|_a$, we obtain the final result.
\end{proof}

The previous theorem states that the computed multiscale solution (with a moderate number of degrees of freedom) is a good approximation of the fine-scale HDG solution computed in~\eqref{EQ:hdg_approx}. However, the computation of the correction operator $\corOp$ or, in a more practical manner, of the basis functions $\tilde b_{H,i}$ defined in~\eqref{eq:msBasisFunctions} requires global fine-scale problems to be solved. This is not feasible in practice, so the localization of these problems is an important task that will be studied in the following. First, we show the following useful result.
\begin{lemma}[Local pre-image]\label{lem:local_pre_image}
 Let $n \in \N$ and for each $i = 1,\ldots,n$ let $\omega_i \subset \Omega$ be a convex union of (coarse) elements of~$\mesh_\coarse$ with $\diam \omega_i \sim H$  and
 \begin{equation*}
  \omega := \operatorname{int}\big(\textstyle\bigcup_{i=1}^n \omega_i\big).
 \end{equation*}
 For each $i$, we denote with $\Gamma_i \subset \partial \omega_i\, \cap\, \partial\omega$ a part of the boundary with non-zero measure.	Further, let $\xi \in \skeletalSpace_\fine$ with $\supp(\projectionOp \xi) \subset \omega$ and ${\xi}\vert_{ \Gamma_i} = 0$ for all $i = 1, \ldots,n$. Then there exists a function $\bubble \in \skeletalSpace_\fine(\omega)$ with
 \begin{equation*}
  \projectionOp\bubble = \projectionOp\xi\quad\text{ and } \quad \|\bubble\|_{a,\omega} \lesssim (\beta/\alpha)^{1/2} \|A^{-1/2}\localQ\xi\|_{0,\omega}.
 \end{equation*}  
\end{lemma}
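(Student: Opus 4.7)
The plan is to obtain $\bubble$ as the energy-minimizer over $\skeletalSpace_\fine(\omega)$ under the constraint $\projectionOp \bubble = \projectionOp \xi$, and to bound its energy by exhibiting an explicit feasible candidate whose stability ultimately relies on a local Poincar\'e--Friedrichs inequality.

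Concretely, I would first define $\bubble$ as the primal component of the local saddle-point problem that finds $(\bubble, \lambda) \in \skeletalSpace_\fine(\omega) \times \traceSpace_\coarse(\omega)$ with
\begin{equation*}
 a(\bubble,\eta) + \langle \lambda, \projectionOp \eta \rangle_\coarse = 0 \quad \text{for all } \eta \in \skeletalSpace_\fine(\omega)
\end{equation*}
and $\projectionOp \bubble = \projectionOp \xi$. Well-posedness follows from the coercivity of $a$ (cf.~\eqref{EQ:LS6}) together with a local inf-sup condition for $\projectionOp$ on $\skeletalSpace_\fine(\omega)$, itself a consequence of the fact that $\projectionOp \injectionOp$ is the identity on $\traceSpace_\coarse$, noted right after the construction of $\projectionOp$ in Section~\ref{SEC:mesh_transfer}.

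For the stability bound I would propose the natural feasible reference $\bubble_0 := \injectionOp \projectionOp \xi$. Using~\eqref{EQ:injection_space}--\eqref{EQ:injection_identity} together with $\supp(\projectionOp \xi) \subset \omega$, one sees that $\bubble_0 \in \traceSpace_\fine$ vanishes on $\partial \omega$, so $\bubble_0 \in \skeletalSpace_\fine(\omega)$; moreover, $\projectionOp \bubble_0 = \projectionOp \xi$ since $\injectionOp \projectionOp$ is a projection. Energy minimality then yields $\|\bubble\|_{a,\omega} \le \|\bubble_0\|_{a,\omega} = \|\bubble_0\|_a$, the latter because $\bubble_0$ is supported in $\overline{\omega}$. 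An element-wise rerun of the chain that proves Lemma~\ref{lem:stab}~\eqref{eq:stab_ah}, now based on the strictly local relations~\eqref{EQ:Lu22-510a} and~\eqref{EQ:Lu22-52} together with the element-localizations of~\eqref{EQ:LS1}--\eqref{EQ:LS3}, produces a first bound $\|\bubble_0\|_a \lesssim \|\xi\|_{a,\tilde\omega}$ on an enlargement $\tilde\omega$ of $\omega$ created by the vertex patches implicit in $\averagingOp_\coarse$. The hypothesis $\xi|_{\Gamma_i} = 0$ then activates, on each convex subregion $\omega_i$ with $\diam \omega_i \sim H$, the Poincar\'e--Friedrichs inequality for broken skeletal spaces from the appendix, yielding $\|\xi\|_{\fine,\omega_i} \lesssim H\,\|\localQ \xi\|_{0,\omega_i}$. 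Summing over $i=1,\dots,n$ and invoking \eqref{EQ:LS6} absorbs the buffer contributions into $\|\xi\|_{a,\omega}$, and the claim follows.

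The main obstacle is precisely this strict localization of~\eqref{eq:stab_ah}. The global stability proof relies on operators (most importantly the vertex-patch averaging $\averagingOp_\coarse$ and the coarse $L^2$-projection) that propagate information across a one-coarse-element buffer and therefore cannot be restricted to $\omega$ a priori. Without the vanishing hypothesis $\xi|_{\Gamma_i}=0$ there would be no mechanism to control the fine-scale data in $\tilde\omega \setminus \omega$ by local data inside $\omega$; the convexity of the $\omega_i$ and the scaling $\diam \omega_i \sim H$ are tuned exactly so that the Poincar\'e--Friedrichs constant scales like $H$, matching the $\coarse$-factors that accompany $\averagingOp_\coarse$ along the stability chain.
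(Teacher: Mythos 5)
Your setup coincides with the paper's: the same local saddle-point definition of $\bubble$ and well-posedness via the inf-sup condition verified with the test function $\injectionOp\mu_\coarse$ (the paper carries this out and finds $\gamma(\coarse)\sim\coarse$). The stability argument, however, diverges and contains a genuine gap. Replacing the abstract saddle-point stability bound by energy minimality with the feasible candidate $\bubble_0=\injectionOp\projectionOp\xi$ is legitimate, but the chain then stalls at $\|\bubble\|_{a,\omega}\le\|\bubble_0\|_a\lesssim\|\xi\|_{a,\tilde\omega}$ with a one-layer buffer $\tilde\omega\supsetneq\omega$, and your final step --- invoking the broken Poincar\'e--Friedrichs inequality on the $\omega_i$ to ``absorb the buffer contributions into $\|\xi\|_{a,\omega}$'' --- is a non sequitur. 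That inequality yields $\|\xi\|_{\fine,\omega_i}\lesssim H\|\localQ\xi\|_{0,\omega_i}$, i.e., it controls an $L^2$-type quantity of $\xi$ \emph{inside} $\omega$ by the energy inside; it says nothing about $\|\xi\|_{a,\tilde\omega\setminus\omega}$, which involves $\localQ\xi$ on elements \emph{outside} $\omega$. One can perturb $\xi$ in $\tilde\omega\setminus\omega$ without changing $\projectionOp\xi$ (hence without changing $\bubble$) or $\|\xi\|_{a,\omega}$ while making $\|\xi\|_{a,\tilde\omega}$ arbitrarily large, so no bound of the form $\|\xi\|_{a,\tilde\omega}\lesssim\|\xi\|_{a,\omega}$ is available. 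Nor can the buffer be argued away: taking $\xi$ approximately constant on $\overline\omega$ with the transition to its exterior values placed just outside $\partial\omega$ gives $\|\xi\|_{a,\omega}=0$ while $\|\injectionOp\projectionOp\xi\|_a>0$, so the buffer-free estimate $\|\bubble_0\|_a\lesssim\|\xi\|_{a,\omega}$ is simply false without using $\xi|_{\Gamma_i}=0$.

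This exposes the deeper problem: in your argument the hypothesis $\xi|_{\Gamma_i}=0$ never does any real work, yet it is exactly what makes the lemma true. The paper's route makes its role visible: the Brezzi stability estimate costs the inverse inf-sup constant, giving $\|\bubble\|_{a,\omega}\lesssim\coarse^{-1}\|\projectionOp\xi\|_{\coarse,\omega}\lesssim\coarse^{-1}\|\injectionOp\projectionOp\xi-\xi\|_{\fine,\omega}+\coarse^{-1}\|\xi\|_{\fine,\omega}$; the first term is handled by Lemma~\ref{LEM:uih_error}, and the second term is precisely where the broken Poincar\'e--Friedrichs inequality of Theorem~\ref{TH:skeleton_spaces} --- and hence the convexity of the $\omega_i$, the scaling $\diam\omega_i\sim H$, and the vanishing on $\Gamma_i$ --- are consumed to recover $\coarse^{-1}\|\xi\|_{\fine,\omega}\lesssim\|\localQ\xi\|_{0,\omega}\lesssim\|\xi\|_{a,\omega}$. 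If you wish to keep the energy-minimality shortcut, you must choose a competitor whose energy is controlled by data depending on $\xi$ only through $\projectionOp\xi$ and then pay the $\coarse^{-1}$ anyway; at that point you have reproduced the paper's argument.
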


\begin{proof}
 We define $\bubble \in \skeletalSpace_\fine(\omega)$ (together with the Lagrange multiplier $\delta_\coarse \in \traceSpace_\coarse(\omega)$ as the solution of 
 \begin{equation}\label{eq:bubble}
 \begin{aligned}
  a(\bubble, \eta)\quad\, &-& \langle\delta_\coarse, \projectionOp\eta\rangle_{\coarse} \quad&=\quad 0,\\
  \langle \projectionOp\bubble,\mu_\coarse\rangle_\coarse  && &=\quad \langle\projectionOp\xi, \mu_\coarse\rangle_{\coarse}
 \end{aligned}
 \end{equation}
 for all $\eta\in \skeletalSpace_\fine(\omega)$ and all $\mu_\coarse \in \traceSpace_\coarse(\omega)$. Note that $\bubble$ and $\delta_\coarse$ can be trivially extended by zero to the spaces $\skeletalSpace_\fine$ and $\traceSpace_{\coarse}$, respectively. 
 From classical saddle point theory (see, e.g., \cite[Cor.~4.2.1]{BoffiBF13}), we know that \eqref{eq:bubble} has a unique solution if the inf-sup condition
 \begin{equation}\label{eq:infsup}
  \adjustlimits\inf_{\mu_\coarse \in \traceSpace_\coarse(\omega)}\sup_{\zeta \in \skeletalSpace_\fine(\omega)} \frac{\langle \mu_\coarse,\projectionOp\zeta\rangle_{\coarse}}{\|\mu_\coarse\|_{\coarse}\,\|\zeta\|_a} \geq \gamma(\coarse) > 0
 \end{equation}
 holds and $a$ is coercive. 

 To show the inf-sup condition \eqref{eq:infsup}, let $\mu_\coarse \in \traceSpace_\coarse(\omega)$. Then, with the explicit choice $\zeta = \injectionOp\mu_\coarse \in \traceSpace_\fine(\omega)$ we have that $\mu_\coarse = \projectionOp \injectionOp \mu_\coarse$, and we can deduce that
 \begin{equation}\label{eq:proofInfSup}
  \sup_{\zeta \in \skeletalSpace_\fine(\omega)} \frac{\langle \mu_\coarse,\projectionOp\zeta\rangle_{\coarse}}{\|\mu_\coarse\|_{\coarse}\,\|\zeta\|_a} \geq \frac{\langle \mu_\coarse,\mu_\coarse\rangle_{\coarse}}{\|\mu_\coarse\|_{\coarse}\,\|\injectionOp\mu_\coarse\|_a} \overset{\eqref{EQ:norm_equiv_inj}}\eqsim \frac{\| \injectionOp \mu_\coarse \|_\fine}{\| \injectionOp \mu_\coarse \|_a}.
 \end{equation}
 Next, let $v \in \linElementSpace_\coarse$ be such that $\gamma_\coarse v = \mu_\coarse$. This implies that $\gamma_\fine v = \injectionOp \mu_\coarse$. By \eqref{EQ:LS4} and the definition of $a$ we have that
 \begin{equation*}
  \| \injectionOp \mu_\coarse \|_a \lesssim \beta^{1/2} \| 
\nabla v \|_0 =: (\bigstar).
 \end{equation*}
 Now, by the standard scaling argument on the coarse mesh, we have
 \begin{equation*}
  (\bigstar) \lesssim \beta^{1/2}\coarse^{-1} \| v \|_0 \lesssim \beta^{1/2}\coarse^{-1} \| \injectionOp \mu_\coarse \|_\fine,
 \end{equation*}
 where the second estimate follows from the equivalence of $\| \cdot \|_0$ and $\| \cdot \|_\fine$ on $\linElementSpace_\fine$. Going back to~\eqref{eq:proofInfSup}, we obtain the inf-sup condition~\eqref{eq:infsup} with $\gamma(\coarse) \gtrsim \beta^{-1/2}\coarse > 0$.
	
 Due to the locality of~$\bubble$ and $\projectionOp\xi$ and with~\cite[Cor.~4.2.1]{BoffiBF13} and~\eqref{eq:infsup}, problem~\eqref{eq:bubble} is well-posed and the stability estimate
 \begin{align*}
  \|\bubble\|_{a,\omega} \leq \frac{2}{\gamma(\coarse)}\,\|\projectionOp\xi\|_{\coarse,\omega}
 \end{align*}
 holds. Using \eqref{EQ:norm_equiv_inj},	we obtain
 \begin{align*}
  \|\bubble\|_{a,\omega} 
  & \lesssim \beta^{1/2}\coarse^{-1}\,\|\projectionOp\xi\|_{\coarse,\omega} \lesssim \beta^{1/2}\coarse^{-1} \| \injectionOp \projectionOp \xi \|_{\fine,\omega} \\
  &\lesssim \beta^{1/2}H^{-1} \| \injectionOp \projectionOp \xi - \xi \|_{\fine,\omega} + \beta^{1/2}H^{-1} \| \xi \|_{\fine,\omega}.
 \end{align*}
 The first term can be estimated using Lemma~\ref{LEM:uih_error}. For the second term, we require a Poincar\'e--Friedrichs inequality for broken skeletal spaces. This result can be derived from Theorem~\ref{TH:skeleton_spaces} in the appendix, which follows from~\cite{Brenner03} when carefully tracking the dependence on the domain size. Overall, we obtain
 \begin{equation*}
  \|\bubble\|_{a,\omega} \lesssim (\beta/\alpha)^{1/2}\|A^{-1/2}\localQ\xi\|_{0,\omega}.
 \end{equation*}
 Finally, we emphasize that $\projectionOp \bubble = \projectionOp \xi$ by construction.
\end{proof}

\begin{theorem}[Decay of element corrections]\label{t:decay}
 Let $\xi_{\coarse} \in \skeletalSpace_{\coarse}$ and $\elem \in \mesh_\coarse$ a coarse element. Further, let $\ell \in \N$. Then there exists $0 < \vartheta < 1$ depending on $\alpha$ and $\beta$ such that
 \begin{equation}\label{eq:decay}
  \| \corOp_\elem \xi_\coarse \|_{a,\Omega \setminus \patch{\ell}(\elem)} \lesssim \vartheta^{\ell} \| \corOp_\elem \xi_\coarse \|_a.
 \end{equation}
\end{theorem}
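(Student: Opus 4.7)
The plan is to adapt the classical LOD Caccioppoli-type decay argument (cf.~\cite{MalqvistP14}) to the skeletal HDG setting. Write $\phi := \corOp_\elem \xi_\coarse \in \kerSpace$ and assume $\ell$ is sufficiently large (the small-$\ell$ case is trivial by stability of $\corOp_\elem$). The aim is to construct a test function $\eta \in \kerSpace$ that essentially agrees with $\phi$ on $\Omega \setminus \patch{\ell}(\elem)$ yet vanishes in a neighborhood of $\elem$, so that plugging it into \eqref{eq:corOpEle} makes the right-hand side vanish and yields an energy comparison between a far region and an annular buffer.

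First I would pick a coarse piecewise affine cutoff $\chi \in \linElementSpace_\coarse$ with $\chi \equiv 0$ on $\patch{\ell-2}(\elem)$, $\chi \equiv 1$ on $\Omega \setminus \patch{\ell-1}(\elem)$, and $\|\nabla \chi\|_{L^\infty} \lesssim \coarse^{-1}$. Lift $\phi$ via the operator of Appendix~\ref{SEC:injection_op}, setting $z := \liftingOp \phi \in H^1(\Omega)$, and consider the natural candidate
\begin{equation*}
 \psi := \faceProj \gamma_\fine(\chi z) \in \skeletalSpace_\fine,
\end{equation*}
whose support is contained in $\Omega \setminus \patch{\ell-3}(\elem)$. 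Since in general $\projectionOp \psi \neq 0$, I would invoke Lemma~\ref{lem:local_pre_image} on the annular region $\omega := \Omega \setminus \patch{\ell-4}(\elem)$ (decomposed into convex unions of coarse elements as required by the lemma) to produce a bubble $\bubble \in \skeletalSpace_\fine(\omega)$ with $\projectionOp \bubble = \projectionOp \psi$ and $\|\bubble\|_{a,\omega} \lesssim \|\psi\|_{a,\omega}$. Then $\eta := \psi - \bubble \in \kerSpace$ is supported in $\omega$, and for $\ell \geq 5$ its support is disjoint from~$\elem$.

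Testing \eqref{eq:corOpEle} with $\eta$ gives $a_\elem(\injectionOp \xi_\coarse, \eta) = 0$, hence $a(\phi, \psi) = a(\phi, \bubble)$. On $\Omega \setminus \patch{\ell}(\elem)$ the cutoff is identically one, so \eqref{EQ:proj_prop}, \eqref{EQ:LS3} and the approximation properties of $\faceProj \gamma_\fine$ make $\localQ \psi \approx \localQ \phi$ and $\localU \psi \approx \localU \phi$ there. Isolating the resulting ``good'' contribution from $a(\phi,\psi)$ leads to
\begin{equation*}
 \|\phi\|^2_{a,\Omega \setminus \patch{\ell}(\elem)} \lesssim \|\phi\|_{a,R_\ell}\,\|\psi\|_{a,R_\ell} + \|\phi\|_{a,R_\ell}\,\|\bubble\|_{a,\omega},
\end{equation*}
where $R_\ell := \patch{\ell}(\elem) \setminus \patch{\ell-4}(\elem)$ is the annular region carrying the commutator terms produced by $\nabla \chi$. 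The factor $\coarse^{-1}\|z\|_{0,R_\ell}$ generated by those commutators is absorbed using \eqref{EQ:s_norms}--\eqref{EQ:norm_equiv} together with the Poincar\'e--Friedrichs inequality on broken skeletal spaces (Theorem~\ref{TH:skeleton_spaces}), after subtracting a suitable constant from $z$ on $R_\ell$. This gives $\|\psi\|_{a,R_\ell} + \|\bubble\|_{a,\omega} \lesssim \|\phi\|_{a,R_\ell}$, so
\begin{equation*}
 \|\phi\|^2_{a,\Omega \setminus \patch{\ell}(\elem)} \lesssim \|\phi\|^2_{a,R_\ell} = \|\phi\|^2_{a,\Omega \setminus \patch{\ell-4}(\elem)} - \|\phi\|^2_{a,\Omega \setminus \patch{\ell}(\elem)},
\end{equation*}
which rearranges to $\|\phi\|^2_{a,\Omega \setminus \patch{\ell}(\elem)} \leq \gamma\,\|\phi\|^2_{a,\Omega \setminus \patch{\ell-4}(\elem)}$ for some $\gamma \in (0,1)$, and iterating $\lfloor \ell/4 \rfloor$ times yields the claim with $\vartheta := \gamma^{1/4}$.

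The most delicate step is making rigorous sense of the ``product'' $\chi z$ in the skeletal setting and tracking the commutator terms that arise when $\chi$ is moved past $\faceProj$, $\gamma_\fine$, and the local solvers $\localU$, $\localQ$; in particular, the error introduced by the face-wise $L^2$-projection of the non-polynomial $\chi z$ must be shown to be of order $\coarse$ so that it can indeed be absorbed. A secondary difficulty is applying Lemma~\ref{lem:local_pre_image} on the non-convex annular region $\omega$, which forces a decomposition into convex sub-patches and a careful tuning of the ``safety buffer'' between $\patch{\ell-1}(\elem)$ and $\patch{\ell-4}(\elem)$, so that the Poincar\'e--Friedrichs constant on each sub-patch is independent of $\ell$ and~$\coarse$.
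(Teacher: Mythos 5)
Your proposal follows essentially the same architecture as the paper's proof: a coarse cutoff supported away from $\elem$, a correction of the cutoff test function back into $\kerSpace$ via Lemma~\ref{lem:local_pre_image}, the observation that testing \eqref{eq:corOpEle} with a kernel function supported off $\elem$ annihilates the right-hand side, commutator estimates on the transition annulus, and a hole-filling iteration. The one genuine deviation is that you apply the cutoff to the lifted bulk function $z = \liftingOp\phi$ and then project back to the skeleton, whereas the paper multiplies the cutoff directly against the skeletal function (using the element-wise average $\bar\eta$ and Lemma~\ref{LEM:rewrite_prod} to control the resulting commutators in Lemmas~\ref{LEM:q_bound}--\ref{LEM:bound_eta_no_eta}). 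Your route is plausible but defers exactly the technical heart of the argument: the bounds on the commutators produced by moving $\chi$ past $\faceProj$, $\gamma_\fine$, $\localU$, and $\localQ$ are what the paper spends three lemmas on, and without them the proof is a skeleton (no pun intended).

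There is one concrete error you should fix: the choice $\omega := \Omega\setminus\patch{\ell-4}(\elem)$ for Lemma~\ref{lem:local_pre_image} does not work. First, the lemma requires a decomposition of $\omega$ into convex pieces $\omega_i$ of diameter $\sim\coarse$, each touching $\partial\omega$ along a set $\Gamma_i$ of positive measure on which $\xi$ vanishes; sub-patches of the full exterior that are far from both $\partial\patch{\ell-4}(\elem)$ and $\partial\Omega$ have no such $\Gamma_i$, so the hypotheses fail. Second, and more damagingly, the lemma's stability bound is $\|\bubble\|_{a,\omega}\lesssim\|\xi\|_{a,\omega}$ over \emph{all} of $\omega$; with your $\omega$ this includes the far field where $\psi=\phi$, so you would obtain $\|\bubble\|_a\lesssim\|\phi\|_{a,\Omega\setminus\patch{\ell-4}(\elem)}$ rather than $\lesssim\|\phi\|_{a,R_\ell}$, and the resulting inequality $X\le C(Y+X)$ with $X=\|\phi\|^2_{a,\Omega\setminus\patch{\ell}(\elem)}$ cannot be rearranged into geometric decay. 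The fix is exactly what the paper does: since $\supp(\projectionOp\psi)$ is contained in a one-layer neighborhood of the transition annulus $R$ (because $\psi=\phi\in\kerSpace$ where $\chi\equiv1$ and $\psi=0$ where $\chi\equiv0$), apply the lemma on $\patch{1}(R)$, an annulus of width bounded independently of $\ell$, so that $\|\bubble\|_a$ is controlled by the energy of $\phi$ on a bounded number of coarse layers only. (A last cosmetic point: iterating the squared-norm recursion in steps of four gives $\vartheta=\gamma^{1/8}$ for the unsquared norm, not $\gamma^{1/4}$; this is immaterial since any fixed $\vartheta\in(0,1)$ suffices.)
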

To prove this essential assertion, we introduce some auxiliary results. In particular, we require a cutoff function $\eta$, which for given $\ell \in \N$, $\ell \geq 4$, and a coarse element $\elem \in \mesh_\coarse$ is the uniquely defined continuous function that satisfies
\begin{subequations}\label{EQ:eta}
\begin{align}
 \eta & \equiv 0 && \text{ in } \Nb^{\ell-3}(\elem), \\
 \eta & \equiv 1 && \text{ in } \Omega \setminus \Nb^{\ell-2}(\elem), \\
 \eta & \text{ is linear } && \text{ in } \elem' \in \mesh_\coarse \,\text{ with }\, \operatorname{int}\elem' \subset R := \Nb^{\ell-2}(\elem) \setminus \Nb^{\ell - 3}(\elem),
\end{align}
\end{subequations}
\corr{using the definition of patches in~\eqref{EQ:patches}.} 
Additionally, we define the element-wise constant function $\bar \eta$ for any $\fineElem \in \mesh_\fine$ by
\begin{equation}\label{EQ:eta_bar}
 \bar \eta\vert_{\fineElem}\equiv \frac{1}{|\partial \fineElem|}\int_{\partial \fineElem} \eta \ds.
\end{equation} 
Further, we denote again with $\faceProj$ the face-wise $L^2$-projection onto $\skeletalSpace_{\fine}$. The following results are important in the subdomains where $\eta$ is not constant. They trivially hold in parts of the domain where $\eta$ is constant.
\begin{lemma}\label{LEM:q_bound}
 Let $\elem \in \mesh_\coarse$ a coarse mesh cell, $\eta$ be defined as in \eqref{EQ:eta} for some $\ell \in \N$. Then, for $\elem' \in \mesh_\coarse$ and $\mesh_{\elem'} = \{ \fineElem \in \mesh_\fine\colon \fineElem \subset \elem'\}$
 \begin{equation*}
   \sum_{\fineElem \in \mesh_{\elem'}} \int_{\fineElem} A^{-1} \localQ m\cdot \left[ \localQ(m\eta) -  \eta \localQ m \right] \dx \lesssim \frac{\max\{1,\beta\}}{\min\{1,\alpha\}} \| A^{-1/2} \localQ m \|^2_{0,\omega_{\elem'}}.
 \end{equation*}
 for any $m \in \skeletalSpace_\fine$ with $(\injectionOp \projectionOp m)\vert_{\mesh_{\elem'}} = 0$.
\end{lemma}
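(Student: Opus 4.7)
The plan is to localize the analysis to a single fine element $\fineElem \in \mesh_{\elem'}$, rewrite the commutator-like quantity $\localQ(m\eta)|_\fineElem - \eta\,\localQ m|_\fineElem$ using the scalar $\bar\eta|_\fineElem$ and the linearity of the local solver, and bring in the hypothesis $(\injectionOp\projectionOp m)|_{\mesh_{\elem'}} = 0$ only at the very end, through a Poincar\'e-type estimate supplied by Lemma~\ref{LEM:uih_error}. Since $\bar\eta|_\fineElem$ is a constant on $\fineElem$, linearity of $\localQ$ gives
\begin{equation*}
\localQ(m\eta)|_\fineElem - \eta\,\localQ m|_\fineElem = (\bar\eta|_\fineElem - \eta)\,\localQ m|_\fineElem + \localQ(\phi_\fineElem)|_\fineElem,
\end{equation*}
where $\phi_\fineElem := m(\eta - \bar\eta|_\fineElem)$ is regarded as boundary data on $\partial\fineElem$. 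If $\eta$ is constant on the coarse element containing $\fineElem$ then both summands vanish; otherwise $\eta$ is affine there with $|\nabla\eta|\lesssim 1/\coarse$, and by definition of $\bar\eta|_\fineElem$ one has $\|\eta - \bar\eta|_\fineElem\|_{L^\infty(\fineElem)} \lesssim \fine/\coarse$, so the first summand contributes at most $(\fine/\coarse)\|A^{-1/2}\localQ m\|_{0,\fineElem}^2$, which is harmless after summing.

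For the second summand I would test the primal HDG equation~\eqref{EQ:hdg_primary} for $\localQ m$ with the admissible field $\vec p = \localQ\phi_\fineElem|_\fineElem \in \vec W_\fineElem$ and integrate by parts against $\localU m$ to obtain
\begin{equation*}
\int_\fineElem A^{-1}\localQ m \cdot \localQ\phi_\fineElem \dx = -\int_\fineElem \nabla\localU m \cdot \localQ\phi_\fineElem \dx + \int_{\partial\fineElem}(\localU m - m)\,\localQ\phi_\fineElem \cdot \Nu \ds.
\end{equation*}
The smallness of $\phi_\fineElem$ then propagates through the local-solver estimates: since $\|\phi_\fineElem\|_{\fine,\fineElem} \lesssim (\fine/\coarse)\|m\|_{\fine,\fineElem}$, property~\eqref{EQ:LS2} yields $\|\localQ\phi_\fineElem\|_{0,\fineElem} \lesssim (1/\coarse)\|m\|_{\fine,\fineElem}$. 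Combining this with $\|\nabla\localU m\|_{0,\fineElem} \lesssim \|\localQ m\|_{0,\fineElem}$ (a consequence of \eqref{EQ:LS1} together with~\eqref{EQ:LS3}), the boundary estimate $\|\localU m - m\|_{L^2(\partial\fineElem)} \lesssim \fine^{1/2}\|\localQ m\|_{0,\fineElem}$ from~\eqref{EQ:LS1}, and a discrete trace inequality for $\localQ\phi_\fineElem$ on $\partial\fineElem$, both volume and boundary contributions are bounded by $(1/\coarse)\|\localQ m\|_{0,\fineElem}\|m\|_{\fine,\fineElem}$.

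Summation over $\fineElem \in \mesh_{\elem'}$ followed by Cauchy--Schwarz produces the intermediate bound $(1/\coarse)\|\localQ m\|_{0,\elem'}\|m\|_{\fine,\elem'}$. The hypothesis $(\injectionOp\projectionOp m)|_{\mesh_{\elem'}} = 0$ lets me write $\|m\|_{\fine,\elem'} = \|m - \injectionOp\projectionOp m\|_{\fine,\elem'}$, and Lemma~\ref{LEM:uih_error} then gives $\|m\|_{\fine,\elem'} \lesssim \coarse\|\localQ m\|_{0,\omega_{\elem'}}$. The factors of $\coarse$ cancel exactly, delivering the desired bound $\lesssim \|A^{-1/2}\localQ m\|_{0,\omega_{\elem'}}^2$. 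The principal obstacle is the careful book-keeping of $\fine$-powers through \eqref{EQ:LS1}--\eqref{EQ:LS3} and the discrete trace inequalities so that only the factor $1/\coarse$ remains; using the \emph{$\partial\fineElem$-average} $\bar\eta|_\fineElem$ is essential here because it guarantees $\int_{\partial\fineElem}(\eta-\bar\eta|_\fineElem)\ds=0$ and the sharp $\fine/\coarse$ smallness of $\phi_\fineElem$ on $\partial\fineElem$, which is exactly what feeds \eqref{EQ:LS2} to deliver the $1/\coarse$-gain.
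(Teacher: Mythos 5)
Your proposal is correct and follows essentially the same route as the paper: the identical splitting via the element-wise boundary average $\bar\eta$, the same $\fine/\coarse$-smallness of $\eta-\bar\eta$, and the same final cancellation of the $\coarse^{-1}$ factor through Lemma~\ref{LEM:uih_error} using $\injectionOp\projectionOp m=0$. The only cosmetic difference is that you handle the cross term with the intermediate identity from \eqref{EQ:hdg_primary} plus integration by parts (volume term with $\nabla\localU m$ and a boundary term with $\localU m-m$), whereas the paper uses the fully reduced form of Lemma~\ref{LEM:rewrite_prod}; both rest on the same computation and yield the same bound.
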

\begin{proof}
 We start with estimating the term
 \begin{multline*}
  \int_\fineElem A^{-1} \localQ m \cdot \left[ \localQ(m\eta) -  \eta \localQ m \right] \dx \\
  = \underbrace{ \int_\fineElem A^{-1} \localQ m \cdot\left[ \localQ(m\eta) - \localQ(m \bar \eta) \right] \dx }_{ =: \Xi_1}
  + \underbrace{ \int_\fineElem A^{-1} \localQ m \cdot\left[ \bar \eta \localQ m - \eta \localQ m  \right] \dx }_{ =: \Xi_2 }.
 \end{multline*}
 for any $\fineElem \in \mesh_{\elem'}$. Note that we have used that $\bar \eta$ is constant and thus $\localQ(m\bar\eta) = (\localQ m)\bar\eta$. Next, we employ Lemma~\ref{LEM:rewrite_prod} with $\rho = m$ and $\xi=m\eta$ as well as $\xi=m \bar \eta$ to reformulate
 \begin{align}
  \Xi_1 = & \int_{\partial \fineElem} (\localQ m \cdot \Nu) m (\eta - \bar \eta) \ds \tag{T1}\label{EQ:T1} \\
  & + \int_{\partial \fineElem} \tau (\localU m - m) \left[ \localU(m\eta) - \localU(m\bar\eta) \right] \ds \tag{T2}, \label{EQ:T2}
 \end{align}
 whose components we can estimate separately. First, we observe that
 \begin{equation*}
  | \eqref{EQ:T1} | \lesssim h^{-1} \| \localQ m \|_{0,\fineElem} \| m \|_{\fine,\fineElem} \| \eta - \bar \eta \|_{L^\infty(\partial \fineElem)} \lesssim \beta^{1/2}H^{-1} \| A^{-1/2}\localQ m \|_{0,\fineElem} \| m \|_{h,\fineElem},
 \end{equation*}
 since we can bound $\| \eta - \bar \eta \|_{L^\infty(\partial \fineElem)} \lesssim \fine/\coarse$ (which in turn follows directly from \eqref{EQ:eta} and \eqref{EQ:eta_bar}). Second, we have
 \begin{align*}
  | \eqref{EQ:T2} | & \lesssim \tau_\mathrm{max} h^{-1} \| \localU m - m \|_{\fine,\fineElem} \| \localU(m\eta-m\bar \eta) \|_{\fine,\fineElem} \\
  & \lesssim \tau_\mathrm{max} \| A^{-1}\localQ m \|_{0,\fineElem} \| \faceProj( m\eta ) - m\bar \eta \|_{h,\fineElem}  = \tau_\mathrm{max} \|A^{-1} \localQ m \|_{0,\fineElem} \| \faceProj( m\eta - m\bar \eta ) \|_{h,\fineElem} \\
  & \le \tau_\mathrm{max} \| A^{-1}\localQ m \|_{0,\fineElem} \| m\eta - m\bar \eta \|_{h,\fineElem} \lesssim \tau_\mathrm{max} \| A^{-1}\localQ m \|_{0,\fineElem} \| m \|_{h,\fineElem} \| \eta - \bar \eta \|_{L^\infty(\partial \fineElem)} \\
  & \lesssim \frac{\tau_\mathrm{max} \fine}{\alpha^{1/2} \corr{\coarse}}  \| A^{-1/2}\localQ m \|_{0,\fineElem} \| m \|_{h,\fineElem},
 \end{align*}
 where $\tau_\mathrm{max} = \max \{ \tau(x)\colon x \in \partial \fineElem \}$ and the second inequality uses~\eqref{EQ:def_local_solver_l2}, \eqref{EQ:LS1}, and~\eqref{EQ:LS2}. If~$\tau_\mathrm{max} \fine \lesssim 1$, this implies that
 \begin{equation*}
  | \Xi_1 | \lesssim \frac{\max\{1,\beta\}}{\min\{1,\alpha\}} \| A^{-1/2}\localQ m \|^2_{0,\fineElem} + H^{-2} \| m \|^2_{h,\fineElem}.
 \end{equation*}
 We also have that
 \begin{equation*}
  | \Xi_2 | \le \| A^{-1/2}\localQ m \|^2_{0,\fineElem} \| \bar \eta - \eta \|_{L^\infty(\fineElem)} \lesssim \frac{\fine}{\coarse}\| A^{-1/2}\localQ m \|^2_{0,\fineElem}.
 \end{equation*}
 Finally, since $\injectionOp \projectionOp m = 0$ we observe that with Lemma~\ref{LEM:uih_error}
 \begin{align*}
  \sum_{\fineElem \subset \elem'} \| m \|^2_{\fine,\fineElem} = \| m \|^2_{\fine,\elem'} = \| m -\injectionOp\projectionOp m \|^2_{\fine,\elem'} &\lesssim H^2 \| A^{-1}\localQ m \|^2_{0,\omega_{\elem'}} \\&\lesssim \alpha^{-1} H^2 \| A^{-1/2}\localQ m \|^2_{0,\omega_{\elem'}},
 \end{align*}
 which completes the proof.
\end{proof}

\begin{lemma}\label{LEM:u_bound}
 Let $\eta$ be defined as in~\eqref{EQ:eta}. Then, for $\elem' \in \mesh_\coarse$ and $\mesh_{\elem'} = \{ \fineElem \in \mesh_\fine\colon \fineElem \subset \elem'\}$ we have
 \begin{equation*}
   \sum_{\fineElem \in \mesh_{\elem'}} \int_{\partial \fineElem} \tau (\localU m - m) \left[ \localU(m\eta) -  \eta \localU m \right] \ds \lesssim \alpha^{-1}\| A^{-1/2} \localQ m \|^2_{0,\omega_{\elem'}}
  \end{equation*}
 for any $m \in \skeletalSpace_\fine$ with $(\injectionOp \projectionOp m)\vert_{\mesh_{\elem'}} = 0$.
\end{lemma}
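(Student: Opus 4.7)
The plan is to mirror the argument of Lemma~\ref{LEM:q_bound}, with the same element-wise constant proxy $\bar\eta$ replacing $\eta$. On each fine element $\fineElem \in \mesh_{\elem'}$ I write
\begin{equation*}
  \localU(m\eta) - \eta\,\localU m = \bigl[\localU(m\eta) - \localU(m\bar\eta)\bigr] + \bigl[\localU(m\bar\eta) - \bar\eta\,\localU m\bigr] + (\bar\eta - \eta)\,\localU m.
\end{equation*}
The middle bracket vanishes identically on $\fineElem$: since $\bar\eta|_\fineElem$ is a scalar constant and the local solver is linear, $\localU(m\bar\eta)|_\fineElem = \bar\eta|_\fineElem \,\localU m|_\fineElem$. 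Hence the sum reduces to two contributions, $T_1$ and $T_2$, analogous to \eqref{EQ:T1}--\eqref{EQ:T2}.

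For $T_1$, I would apply Cauchy--Schwarz on $\partial\fineElem$ together with the assumption $\tau h \lesssim 1$, which yields $\tau\int_{\partial\fineElem} f g \ds \lesssim h^{-1}\|f\|_{\fine,\fineElem}\|g\|_{\fine,\fineElem}$. Then \eqref{EQ:LS1} converts $\|\localU m - m\|_{\fine,\fineElem}$ into $h\,\|\localQ m\|_{0,\fineElem}$, and I estimate
\begin{equation*}
  \|\localU(m\eta) - \localU(m\bar\eta)\|_{\fine,\fineElem} \lesssim \|\localU(m(\eta-\bar\eta))\|_{0,\fineElem} \overset{\eqref{EQ:LS2}}\lesssim \|\faceProj(m(\eta-\bar\eta))\|_{\fine,\fineElem} \lesssim \|\eta - \bar\eta\|_{L^\infty(\partial\fineElem)} \|m\|_{\fine,\fineElem},
\end{equation*}
using the $L^2$-stability of $\faceProj$ and the equivalence of $\|\cdot\|_{\fine,\fineElem}$ with $\|\cdot\|_{0,\fineElem}$ on polynomial bulk functions. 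The key geometric estimate, already exploited in Lemma~\ref{LEM:q_bound}, is $\|\eta - \bar\eta\|_{L^\infty(\partial\fineElem)} \lesssim \fine/\coarse$. Summing over $\fineElem \in \mesh_{\elem'}$ produces $|T_1| \lesssim \coarse^{-1}\|\localQ m\|_{0,\elem'}\|m\|_{\fine,\elem'}$.

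For $T_2$, the same $\tau h \lesssim 1$ argument together with the bound $\|(\bar\eta-\eta)\localU m\|_{\fine,\fineElem} \lesssim (\fine/\coarse)\|\localU m\|_{\fine,\fineElem}$ and $\|\localU m\|_{\fine,\fineElem} \lesssim \|\localU m\|_{0,\fineElem} \lesssim \|m\|_{\fine,\fineElem}$ from \eqref{EQ:LS2} gives the identical bound $|T_2| \lesssim \coarse^{-1}\|\localQ m\|_{0,\elem'}\|m\|_{\fine,\elem'}$. Finally, since $(\injectionOp\projectionOp m)\vert_{\mesh_{\elem'}} = 0$, Lemma~\ref{LEM:uih_error} yields $\|m\|_{\fine,\elem'} = \|m - \injectionOp\projectionOp m\|_{\fine,\elem'} \lesssim \coarse\,\|\localQ m\|_{0,\omega_{\elem'}}$, which combines with the boundedness of $A$ to give the claimed estimate.

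The main subtlety is keeping the algebra straight on each fine element: one has to exploit that $\localU$ is a local linear map to kill the middle bracket, and be careful that the skeletal norm $\|\cdot\|_{\fine,\fineElem}$ applied to (polynomial) bulk functions is equivalent to the bulk $L^2$-norm, so that \eqref{EQ:LS2} can be used in both directions. Once these points are in place, the remaining work is an arithmetic repetition of the bookkeeping already carried out for Lemma~\ref{LEM:q_bound}.
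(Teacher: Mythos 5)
Your proposal is correct and follows essentially the same route as the paper: the identical $\bar\eta$-based splitting (the paper's $\Psi_1,\Psi_2$ are your $T_1,T_2$, with the vanishing middle bracket $\localU(m\bar\eta)=\bar\eta\,\localU m$ used implicitly there), the same Cauchy--Schwarz/\eqref{EQ:LS1}/\eqref{EQ:LS2} estimates with $\|\eta-\bar\eta\|_{L^\infty(\partial\fineElem)}\lesssim \fine/\coarse$, and the same final appeal to Lemma~\ref{LEM:uih_error}. No gaps.
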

\begin{proof}
 As above, we start to rewrite the term for a single element $\fineElem \in \mesh_{\elem'}$,
 \begin{multline*}
   \int_{\partial \fineElem} \tau (\localU m - m) \left[ \localU(m\eta) -  \eta \localU m \right] \ds \\
   = \underbrace{ \int_{\partial \fineElem} \tau (\localU m - m) \left[ \localU(m\eta) -  \localU (m \bar \eta) \right] \ds }_{ = \Psi_1 }
    + \underbrace{ \int_{\partial \fineElem} \tau (\localU m - m) \left[ \bar \eta \localU m -  \eta \localU m \right] \ds }_{ = \Psi_2 }.
 \end{multline*}
 The estimate for $\Psi_1$ is the estimate of \eqref{EQ:T2}. For the latter term, we can proceed using the same arguments as for~\eqref{EQ:T2} and obtain
 \begin{align*}
  | \Psi_2 | & \lesssim \frac{\tau_\mathrm{max} h}{\alpha^{1/2}H} \| A^{-1/2}\localQ m \|_{0,\fineElem} \| \localU m \|_{h,\fineElem} \lesssim \frac{1}{\alpha^{1/2}H} \| A^{-1/2}\localQ m \|_{0,\fineElem} \| m \|_{h,\fineElem}.
 \end{align*}
 The second inequality follows directly from~\eqref{EQ:LS2} and the fact that $\| q \|_{h,\fineElem} \lesssim \| q \|_{0,\fineElem}$ for any polynomial $q$. The remainder of the proof is based on the same arguments as in the proof of Lemma~\ref{LEM:q_bound}.
\end{proof}

\begin{lemma}\label{LEM:bound_eta_no_eta}
 Let $\eta$ be as defined in \eqref{EQ:eta}. Then, for $\elem' \in \mesh_\coarse$ 
 \begin{equation*}
  \| A^{-1/2} \localQ (m\eta) \|_{0,\elem'}^2 \lesssim \frac{\max\{1,\beta\}}{\min\{1,\alpha\}} \| A^{-1/2} \localQ m \|_{0,\omega_{\elem'}}^2
 \end{equation*}
 for any $m \in \skeletalSpace_\fine$ with $(\injectionOp \projectionOp m)\vert_{\mesh_{\elem'}} = 0$.
\end{lemma}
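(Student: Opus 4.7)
The plan is to expand the squared norm, split off the trivial part involving $\eta\localQ m$, and then treat the commutator $\localQ(m\eta)-\eta\localQ m$ by the same mechanism used in Lemmas~\ref{LEM:q_bound} and~\ref{LEM:u_bound}. Concretely, I would start from
\begin{equation*}
 \|A^{-1/2}\localQ(m\eta)\|_{0,\elem'}^2 = \int_{\elem'} A^{-1}\localQ(m\eta)\cdot\eta\localQ m\dx + \int_{\elem'} A^{-1}\localQ(m\eta)\cdot\bigl[\localQ(m\eta)-\eta\localQ m\bigr]\dx.
\end{equation*}
The first integral is immediate by Cauchy--Schwarz together with $|\eta|\le 1$, producing the factor $\|A^{-1/2}\localQ(m\eta)\|_{0,\elem'}\,\|A^{-1/2}\localQ m\|_{0,\elem'}$ that will contribute to the desired right-hand side after division.

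For the second integral, the crucial step is to repeat the trick used in the proof of Lemma~\ref{LEM:q_bound}: insert and subtract $\bar\eta\localQ m=\localQ(m\bar\eta)$, which is legitimate because $\bar\eta$ is constant on each $\fineElem\in\mesh_{\elem'}$. Then, applying Lemma~\ref{LEM:rewrite_prod} \emph{twice} with the leading factor $\rho=m\eta$ (once with $\xi=m\eta$ and once with $\xi=m\bar\eta$) and subtracting the two identities, the volume term on the left becomes exactly $\int_{\fineElem}A^{-1}\localQ(m\eta)\cdot[\localQ(m\eta)-\bar\eta\localQ m]\dx$, while the right-hand side consists of a boundary integral weighted by $\eta-\bar\eta$ and a stabilizer contribution involving $\localU(m\eta)-\bar\eta\localU m$. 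The point is that $\|\eta-\bar\eta\|_{L^\infty(\partial\fineElem)}\lesssim \fine/\coarse$ by~\eqref{EQ:eta_bar}, which cancels the $\fine^{-1}$ coming from the standard inverse/trace inequalities and leaves a clean bound of the form
\begin{equation*}
 \Bigl|\!\int_{\fineElem}\!A^{-1}\localQ(m\eta)\cdot[\localQ(m\eta)-\bar\eta\localQ m]\dx\Bigr| \lesssim \|A^{-1/2}\localQ(m\eta)\|_{0,\fineElem}\,\bigl(\|A^{-1/2}\localQ m\|_{0,\fineElem}+\coarse^{-1}\|m\|_{\fine,\fineElem}\bigr),
\end{equation*}
in the spirit of the estimates for \eqref{EQ:T1} and \eqref{EQ:T2}. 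Summing over $\fineElem\in\mesh_{\elem'}$ and invoking the hypothesis $(\injectionOp\projectionOp m)|_{\mesh_{\elem'}}=0$ together with Lemma~\ref{LEM:uih_error} to absorb $\coarse^{-1}\|m\|_{\fine,\elem'}\lesssim\|\localQ m\|_{0,\omega_{\elem'}}$, the commutator integral is controlled by $\|A^{-1/2}\localQ(m\eta)\|_{0,\elem'}\,\|A^{-1/2}\localQ m\|_{0,\omega_{\elem'}}$. Replacing $\bar\eta\localQ m$ by $\eta\localQ m$ costs only an extra $\|(\eta-\bar\eta)\localQ m\|_0\lesssim (\fine/\coarse)\|\localQ m\|_{0,\elem'}$, which is harmless.

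Combining the two contributions gives
\begin{equation*}
 \|A^{-1/2}\localQ(m\eta)\|_{0,\elem'}^2 \lesssim \|A^{-1/2}\localQ(m\eta)\|_{0,\elem'}\,\|A^{-1/2}\localQ m\|_{0,\omega_{\elem'}},
\end{equation*}
and dividing by $\|A^{-1/2}\localQ(m\eta)\|_{0,\elem'}$ yields the claim (if this quantity vanishes, there is nothing to prove). The main obstacle I expect is bookkeeping rather than a conceptual difficulty: the stabilizer term produced by Lemma~\ref{LEM:rewrite_prod} needs to be paired with $\localU(m\eta)-\bar\eta\localU m=\localU(m(\eta-\bar\eta))$ and bounded via \eqref{EQ:def_local_solver_l2}, \eqref{EQ:LS1}, and \eqref{EQ:LS2}, just as in the handling of \eqref{EQ:T2}, and one has to be careful that $\tau\fine\lesssim 1$ is used exactly where the original proofs of Lemmas~\ref{LEM:q_bound} and~\ref{LEM:u_bound} used it.
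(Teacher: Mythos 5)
Your proposal is correct and follows essentially the same route as the paper: the paper expands $\|A^{-1/2}\localQ(m\eta)\|_{0,\fineElem}^2$ into the three terms $\Phi_1=\int A^{-1}\localQ(m\eta)\cdot[\localQ(m\eta)-\localQ(m\bar\eta)]$, $\Phi_2=\int A^{-1}\localQ(m\eta)\cdot[\bar\eta\localQ m-\eta\localQ m]$, and $\Phi_3=\int A^{-1}\localQ(m\eta)\cdot(\localQ m)\eta$, which is exactly your splitting (your ``trivial part'' is $\Phi_3$ and your commutator with the inserted $\bar\eta$ is $\Phi_1+\Phi_2$), and it likewise handles $\Phi_1,\Phi_2$ via Lemma~\ref{LEM:rewrite_prod} and the $\|\eta-\bar\eta\|_{L^\infty}\lesssim \fine/\coarse$ bound as in Lemma~\ref{LEM:q_bound}, absorbs $\coarse^{-1}\|m\|_{\fine,\elem'}$ via Lemma~\ref{LEM:uih_error}, and concludes by dividing by $\|A^{-1/2}\localQ(m\eta)\|_{0,\elem'}$.
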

\begin{proof}
 Let as before $\fineElem \in \mesh_{\elem'} = \{ \fineElem \in \mesh_\fine\colon \fineElem \subset \elem'\}$. Then
 \begin{align*} 
  \| &A^{-1/2} \localQ (m\eta) \|^2_{0,\fineElem}\\ &= \underbrace{ \int_\fineElem A^{-1} \localQ (m\eta) \cdot\left[ \localQ(m\eta) - \localQ(m \bar \eta) \right] \dx }_{ =: \Phi_1}
  + \underbrace{ \int_\fineElem A^{-1} \localQ (m\eta) \cdot\left[ \bar \eta \localQ m - \eta \localQ m  \right] \dx }_{ =: \Phi_2 }\\
  &\qquad\qquad+ \underbrace{\int_\fineElem A^{-1} \localQ(m\eta) \cdot (\localQ m)\eta \dx}_{ =: \Phi_3 }.
 \end{align*} 
 The terms $\Phi_1$ and $\Phi_2$ can be estimated analogously to Lemma~\ref{LEM:q_bound}. In particular, we use Lemma~\ref{LEM:rewrite_prod} with $\rho = \eta$ and $\xi = \eta$ as well as $\xi = \bar \eta$. The term $\Phi_3$ can be bounded by
 \begin{equation*}
  |\Phi_3| \leq \| A^{-1/2} \localQ (m\eta) \|_{0,\fineElem}\, \| A^{-1/2} \localQ m \|_{0,\fineElem}.
 \end{equation*}
 Combined with the estimates for $\Phi_1$ and $\Phi_2$ and summing over all fine elements in~$\elem'$, we obtain the assertion.
\end{proof}

\begin{proof}[Proof of Theorem \ref{t:decay}]
 Let $m =  \corOp_\elem \xi_\coarse \in \skeletalSpace_\fine$, $\ell \in \N$ with $\ell \geq 4$, and $\eta$ as defined in~\eqref{EQ:eta} for a given element $\elem \in \mesh_\coarse$. We have
 \begin{align*}
  \|m\|_{a,\Omega \setminus \Nb^{\ell}(\elem)}^2 &= \int_{\Omega \setminus \Nb^\ell(\elem)} A^{-1} \localQ m \cdot (\localQ m)\eta \dx \\&\qquad\qquad+ \sum_{\fineElem \in \mesh_\fine\colon \fineElem \subset \elem} \tau \int_{\partial \fineElem} (\localU m - m)(\localU m - m)\eta\ds\\
  &\leq \int_\Omega A^{-1} \localQ m \cdot (\localQ m)\eta \dx + \sum_{\fineElem \in \mesh_\fine} \tau \int_{\partial \fineElem} (\localU m - m)(\localU m - m)\eta\ds\\&
  = \int_\Omega A^{-1} \localQ m \cdot \localQ (m\eta) \dx + \sum_{\fineElem \in \mesh_\fine} \tau \int_{\partial \fineElem} (\localU m - m)(\localU(m\eta) - m\eta)\ds\\
  &\qquad\qquad+ \int_R A^{-1} \localQ m \cdot [(\localQ m)\eta - \localQ (m\eta)] \dx \\&\qquad\qquad+ \sum_{\fineElem \in \mesh_\fine\colon \fineElem \subset \overline R} \tau \int_{\partial \fineElem} (\localU m - m)[(\localU m)\eta - \localU(m\eta)]\ds,
 \end{align*}
 where $R = \Nb^{\ell-2}(\elem) \setminus \Nb^{\ell-3}(\elem)$.  Note that $(\injectionOp\projectionOp m)\vert_{\Nb^1(R)} = 0$ by construction of $\corOp_\elem$. For the second to last and last line, we can thus use Lemma~\ref{LEM:q_bound} and Lemma~\ref{LEM:u_bound}, respectively, as well as
 \begin{equation}\label{EQ:Qvsah}
  \sum_{\elem' \in \mesh_\coarse\colon\elem' \subset \overline{\Nb^1(R)}}\| A^{-1/2} \localQ m \|^2_{0,\elem'} \leq \|m\|_{a,\Nb^1(R)}^2,
 \end{equation}
 which yields
 \begin{equation}\label{EQ:proofDec}
  \|m\|_{a,\Omega \setminus \Nb^{\ell}(\elem)}^2 \lesssim a(m,\faceProj(m\eta)) + \frac{\max\{1,\beta\}}{\min\{1,\alpha\}}\|m\|_{a,\Nb^1(R)}^2,
 \end{equation}
 where $\faceProj$ is the face-wise $L^2$-orthogonal projection onto $\skeletalSpace_\fine$ as above. To bound the first term, we observe that $\supp(\projectionOp \faceProj(m\eta)) \subset \Nb^1(R)$ and $m\vert_{\partial(\Nb^{\ell-4}(\elem))} = 0$. Therefore, by Lemma~\ref{lem:local_pre_image}, there exists a function $\bubble \in \skeletalSpace_{\fine}(\Nb^1(R))$ with
 \begin{equation*}
  \|\bubble\|_{a,\Nb^1(R)} \lesssim (\beta/\alpha)^{1/2}\|A^{-1/2}\localQ\faceProj(m\eta)\|_{0,\Nb^1(R)}\qquad\text{and}\qquad\projectionOp(m\eta) = \projectionOp\bubble. 
 \end{equation*}	
 We further have
 \begin{equation*}
 \begin{aligned}
  a(m,\faceProj(m\eta)) &= a(m,\faceProj(m\eta) - \bubble) + a(m,\bubble)
  \lesssim (\beta/\alpha)^{1/2} \|m\|_{a,\Nb^1(R)}\|A^{-1/2}\localQ\faceProj(m\eta)\|_{0,\Nb^1(R)},
 \end{aligned}
 \end{equation*}
 using that $\projectionOp(\faceProj(m\eta) - \bubble) = 0$ and thus
 \begin{equation*}
  a(m,\faceProj(m \eta) - \bubble) = a_\elem(\injectionOp\xi_\coarse,\faceProj(m \eta) - \bubble) = 0
 \end{equation*}
 by the first line of~\eqref{eq:saddlepointCorEle} and the fact that $\supp(\faceProj(m \eta) - \bubble) \cap \elem = \emptyset$. By~\eqref{EQ:skeletalBilForm} and~\eqref{EQ:LS1} and $\localQ(m\eta) = \localQ(\faceProj(m\eta))$, we have that
 \begin{align*}
  \|A^{-1/2}\localQ\faceProj(m\eta)\|_{0,\Nb^1(R)} &= \|A^{-1/2}\localQ (m\eta)\|_{0,\Nb^{\ell-1}(\elem)\setminus \Nb^{\ell-3}(\elem)} \\& \lesssim \Big(\frac{\max\{1,\beta\}}{\min\{1,\alpha\}}\Big)^{1/2}\|A^{-1/2}\localQ m\|_{0,\Nb^{\ell}(\elem)\setminus \Nb^{\ell-4}(\elem)},
 \end{align*}
 where we use Lemma~\ref{LEM:bound_eta_no_eta} for all $\elem' \in \mesh_\coarse$ with $\operatorname{int}\elem' \subset \Nb^{\ell-1}(\elem)\setminus \Nb^{\ell-3}(\elem)$ in the last step. Going back to~\eqref{EQ:proofDec} and using once more~\eqref{EQ:Qvsah}, we conclude that there exists a constant $C_{\alpha,\beta} >0$ scaling like ${\max\{1,\beta\}}/{\min\{1,\alpha\}}$ with 
 \begin{align*}
  \|m\|_{a,\Omega \setminus \Nb^{\ell}(\elem)}^2 &\leq C_{\alpha,\beta}\,\|m\|_{a,\Nb^{\ell}(\elem)\setminus \Nb^{\ell-4}(\elem)}^2 \leq C_{\alpha,\beta}\,\|m\|_{a,\Omega \setminus \Nb^{\ell-4}(\elem)}^2 - C_{\alpha,\beta}\,\|m\|_{a,\Omega \setminus \Nb^{\ell}(\elem)}^2.
 \end{align*}
Therefore, we obtain
 \begin{equation*}
  \| m \|^2_{a,\Omega \setminus \Nb^{\ell}(\elem)} \leq \tfrac{C_{\alpha,\beta}}{C_{\alpha,\beta}+1}\| m \|^2_{a,\Omega \setminus \Nb^{\ell-4}(\elem)} \leq \big(\tfrac{C_{\alpha,\beta}}{C_{\alpha,\beta}+1}\big)^{\lfloor\ell/4\rfloor}\| m \|^2_a.
 \end{equation*}
 That is, the estimate~\eqref{eq:decay} holds with $\vartheta = \big(C_{\alpha,\beta}/(C_{\alpha,\beta}+1)\big)^{1/8}$. The results hold for $\ell \leq 3$ by slightly adjusting the constant.
\end{proof}

\section{A practical localized multiscale method}\label{ss:loc}\label{SEC:localized}
As mentioned before, a major drawback of the above multiscale approach is that it evolves around solving auxiliary corrections globally. Based on the decaying behavior of these corrections, which was proved in the previous section, we now introduce a localized---and thus practical---method that achieves similar convergence properties as its non-localized prototypical version. 

Following the ideas of the localization strategy for the LOD in the conforming setting as introduced in~\cite{HenningP13}, we define an element-wise localization procedure. The localization of the element corrections defined in~\eqref{eq:saddlepointCorEle} leads to the following saddle point problem:
find $(\corOp_\elem^\ell\xi_\coarse,\lambda_{\coarse,\elem}^\ell) \in \skeletalSpace_\fine(\Nb^\ell(\elem)) \times \traceSpace_\coarse(\Nb^\ell(\elem))$ such that
\begin{equation}\label{eq:saddlepointCorEleLoc}
\begin{aligned}
 a(\corOp_\elem^\ell \xi_\coarse, \eta)\qquad\, &+& \langle\lambda_{\coarse,\elem}^\ell, \projectionOp\eta\rangle_\coarse \quad&=\quad a_\elem(\injectionOp\xi_\coarse,\eta),\\
 \langle \projectionOp\corOp_\elem^\ell \xi_\coarse,\mu_\coarse\rangle_\coarse\; && &=\quad 0,
\end{aligned}
\end{equation}
for all $(\eta,\mu_\coarse) \in \skeletalSpace_\fine(\Nb^\ell(\elem)) \times \traceSpace_\coarse(\Nb^\ell(\elem))$. We then define the operator $\corOp^\ell$ as the sum of the respective contributions, i.e., $\corOp^\ell\injectionOp\xi_\coarse = \sum_{\elem \in \mesh_\coarse} \corOp^\ell_\elem \xi_\coarse$ for any $\xi_\coarse \in \traceSpace_\coarse$. The localized version of $\msOp$ is now given as $\msOp^\ell\colon\traceSpace_\coarse \to \skeletalSpace_\fine$ with $\msOp^\ell := (\idOp - \corOp^\ell)\injectionOp$. A corresponding localized basis is given by $\{\tilde b^{\ell}_{\coarse,i}\}_i$ with $\tilde b^\ell_{\coarse,i} := \msOp^\ell b_{\coarse,i}$, where $\{b_{\coarse,i}\}_i$ is once again a nodal basis of $\traceSpace_\coarse$.

We can now define a localized version of the method defined in~\eqref{eq:idealMethod} that seeks $\tilde m_\coarse^\ell \in \msSpace^\ell := \msOp^\ell\traceSpace_\coarse$ such that
\begin{equation}\label{eq:locMethod}
 a(\tilde m^\ell_\coarse,\tilde \mu^\ell_\coarse) = \int_\Omega f\,\localU \tilde \mu^\ell_\coarse \dx \qquad \text{for all }\tilde \mu^\ell_\coarse \in \msSpace^\ell.
\end{equation}
We have the following theorem.
\begin{theorem}[Error of the localized multiscale method]\label{t:errloc}
 For the localized multiscale solution~$\tilde m_\coarse^\ell$ defined in~\eqref{eq:locMethod}, it holds that
 \begin{equation*}
  \|m - \tilde m^\ell_\coarse\|_a \lesssim \alpha^{-1/2}H\,\|f\|_0 + \frac{\max\{1,\beta^{3/2}\}}{\min\{1,\alpha^{2}\}}\theta^\ell\,\|f\|_0
 \end{equation*}
 with $0 < \theta < 1$ as in Theorem~\ref{t:decay} and the HDG solution $m \in \skeletalSpace_{\fine}$ to~\eqref{EQ:hdg_approx}.
 
 Moreover, for $\ell \gtrsim \log \tfrac1\coarse$, we retain the linear convergence rate of the ideal method as quantified in Theorem~\ref{t:errIdeal}.
\end{theorem}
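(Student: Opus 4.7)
The plan is to combine Theorem \ref{t:errIdeal} with a quantitative bound on how far the localized correction $\corOp^\ell$ is from the ideal $\corOp$. Since $a$ is symmetric and coercive on $\skeletalSpace_\fine$ by~\eqref{EQ:LS6} and $\msSpace^\ell \subset \skeletalSpace_\fine$, Galerkin orthogonality for~\eqref{eq:locMethod} yields a C\'ea-type best-approximation bound
\[
 \|m - \tilde m_\coarse^\ell\|_a \lesssim \inf_{v \in \msSpace^\ell}\|m - v\|_a.
\]
The natural competitor is $\bar m := (\idOp - \corOp^\ell)\injectionOp\projectionOp m \in \msSpace^\ell$. Recalling from the proof of Theorem \ref{t:errIdeal} that $\tilde m_\coarse = (\idOp - \corOp)\injectionOp\projectionOp m$, a triangle inequality gives
\[
 \|m - \bar m\|_a \le \|m - \tilde m_\coarse\|_a + \|(\corOp - \corOp^\ell)\injectionOp\projectionOp m\|_a \lesssim \coarse\|f\|_0 + \|(\corOp - \corOp^\ell)\injectionOp\projectionOp m\|_a,
\]
so it remains to bound the localization discrepancy by $\theta^\ell\|f\|_0$ (up to an adjusted rate).

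First I would decompose element-wise, $(\corOp - \corOp^\ell)\injectionOp\projectionOp m = \sum_{\elem\in \mesh_\coarse}(\corOp_\elem - \corOp_\elem^\ell)\projectionOp m$. Since each summand is supported in $\Nb^\ell(\elem)$, a finite-overlap Cauchy--Schwarz argument yields
\[
 \Big\|\sum_{\elem \in \mesh_\coarse}(\corOp_\elem - \corOp_\elem^\ell)\projectionOp m\Big\|_a^2 \lesssim \ell^d \sum_{\elem \in \mesh_\coarse}\|(\corOp_\elem - \corOp_\elem^\ell)\projectionOp m\|_a^2.
\]
The central element-wise target is $\|(\corOp_\elem - \corOp_\elem^\ell)\projectionOp m\|_a \lesssim \theta^\ell \|\corOp_\elem\projectionOp m\|_a$. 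Setting $\phi_\elem := \corOp_\elem\projectionOp m$ and introducing a coarse cutoff $\eta$ that vanishes on $\Nb^{\ell-c}(\elem)$ and equals one outside $\Nb^\ell(\elem)$ for some fixed small $c$, I would form $\faceProj((1-\eta)\phi_\elem)$ and apply Lemma \ref{lem:local_pre_image} to adjust it into a function in $\kerSpace \cap \skeletalSpace_\fine(\Nb^\ell(\elem))$ that differs from $\phi_\elem$ only in the annular region $\Nb^\ell(\elem) \setminus \Nb^{\ell-c}(\elem)$. Since $\corOp_\elem^\ell\projectionOp m$ is the $a$-orthogonal projection of $\phi_\elem$ onto this localized kernel space, this competitor bounds the energy error. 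Its size outside $\Nb^{\ell-c}(\elem)$ is controlled by Theorem \ref{t:decay}, contributing the exponential $\theta^\ell$ factor, while the commutator-type contributions from $\eta$ not commuting with the local HDG solvers are treated exactly as in Lemmas \ref{LEM:q_bound}, \ref{LEM:u_bound}, and \ref{LEM:bound_eta_no_eta}.

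Summing the element-wise estimates and using $\sum_\elem \|\phi_\elem\|_a^2 \lesssim \|\injectionOp\projectionOp m\|_a^2 \lesssim \|m\|_a^2 \lesssim \|f\|_0^2$ (a consequence of Lemma \ref{lem:stab} combined with~\eqref{eq:stabTm}) delivers $\|(\corOp - \corOp^\ell)\injectionOp\projectionOp m\|_a \lesssim \ell^{d/2}\theta^\ell \|f\|_0$. Absorbing the polynomial prefactor $\ell^{d/2}$ into a slightly enlarged rate $\tilde\theta \in (\theta,1)$ proves the first claim. The second claim is then immediate: choosing $\ell \gtrsim |\log\coarse|$ with a sufficiently large hidden constant makes $\tilde\theta^\ell \lesssim \coarse$, and the total bound reduces to $\coarse\|f\|_0$, matching the rate of Theorem \ref{t:errIdeal}. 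The main obstacle will be the cutoff-and-correct step, because the product $\eta\phi_\elem$ is neither skeletally polynomial nor kernel-conforming; the repair via face-wise projection and Lemma \ref{lem:local_pre_image}, together with a careful accounting of the commutator defects produced by the cutoff against the local HDG solvers, is the technical heart of the argument. Once this local comparison is in place, the assembly with Theorem \ref{t:errIdeal} is routine.
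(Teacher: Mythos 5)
Your proposal is correct and follows essentially the same route as the paper: best approximation in $\msSpace^\ell$ with the competitor $(\idOp-\corOp^\ell)\injectionOp\projectionOp m = \msOp^\ell\projectionOp\tilde m_\coarse$, a split into the ideal error (Theorem~\ref{t:errIdeal}) plus the element-wise localization defect, and a cutoff-plus-kernel-repair argument per element controlled by Theorem~\ref{t:decay} and the commutator Lemmas~\ref{LEM:q_bound}--\ref{LEM:bound_eta_no_eta}, closed by the stability of $\corOp_\elem$ and of the multiscale solution. The only deviations are organizational: the paper repairs kernel membership by applying $(\idOp-\injectionOp\projectionOp)$ to the truncated corrector rather than invoking Lemma~\ref{lem:local_pre_image}, and your finite-overlap Cauchy--Schwarz summation with the $\ell^{d/2}$ prefactor is, if anything, a more careful accounting than the paper's plain triangle-inequality sum.
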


\begin{proof}
 Let $\ell \geq 3$. By definition, $\tilde m^\ell_\coarse$ is the best approximation of $m$ in the space $\msSpace^\ell$ with respect to the norm $\|\bullet\|_a$. Therefore, we get with the triangle inequality, $\msOp^\ell \projectionOp\tilde m_\coarse \in \msSpace^\ell$, and the equality $\msOp \projectionOp \tilde m_\coarse = \tilde m_\coarse$
 \begin{equation}\label{eq:errorSplit}
 \begin{aligned}
  \|m - \tilde m_\coarse^\ell \|_a &\leq \|m - \msOp^\ell \projectionOp\tilde m_\coarse\|_a \leq \|m - \tilde m_\coarse\|_a + \|(\msOp - \msOp^\ell)\projectionOp\tilde m_\coarse\|_a.
 \end{aligned}
 \end{equation}
 Recall that $\tilde m_\coarse$ is the prototypical multiscale solution to~\eqref{eq:idealMethod}.
 The first term can be estimated using Theorem~\ref{t:errIdeal}; for the second term, we want to use Theorem~\ref{t:decay}. First, we write
 \begin{equation}\label{eq:locproof}
  \|(\msOp - \msOp^\ell)\projectionOp\tilde m_\coarse\|_a \leq \sum_{\elem \in \mesh_\coarse} \|(\corOp_\elem - \corOp_\elem^\ell)\projectionOp\tilde m_\coarse\|_a
 \end{equation}	
 \corr{with the operators $\corOp_\elem$ and $\corOp_\elem^\ell$ defined in~\eqref{eq:saddlepointCorEle} and~\eqref{eq:saddlepointCorEleLoc}, respectively.} 
 To further estimate the right-hand side, we once again require a cutoff function~$\chi$, which for given $\ell \in \N$, $\ell \geq 3$ and a coarse element $\elem \in \mesh_\coarse$ is the uniquely defined continuous function that satisfies 
 \begin{subequations}\label{EQ:eta2}
 \begin{align}
  \chi & \equiv 0 && \text{ in } \Nb^{\ell-2}(\elem), \\
  \chi & \equiv 1 && \text{ in } \Omega \setminus \Nb^{\ell-1}(\elem), \\
  \chi & \text{ is linear } && \text{ in } \elem' \in \mesh_\coarse \,\text{ with }\, \operatorname{int}\elem' \subset R := \Nb^{\ell-1}(\elem) \setminus \Nb^{\ell - 2}(\elem),
 \end{align}
 \end{subequations} 
 \corr{see~\eqref{EQ:patches} for the definition of the element patches.} 
 We abbreviate $\xi_\coarse := \projectionOp\tilde m_\coarse$. Since $(\idOp - \injectionOp\projectionOp)\chi\corOp_\elem\xi_\coarse \in \skeletalSpace_\fine(\Nb^\ell(\elem)) \cap \kerSpace$ and $\corOp_\elem^\ell\xi_\coarse \in \skeletalSpace_\fine(\Nb^\ell(\elem)) \cap \kerSpace$, we obtain from the definition of~$\corOp_\elem$ and~$\corOp_\elem^\ell$
 \begin{align*}
  \|(\corOp_\elem - \corOp_\elem^\ell)\xi_\coarse\|_a^2 & = a((\corOp_\elem - \corOp_\elem^\ell)\xi_\coarse, (\corOp_\elem - \corOp_\elem^\ell)\xi_\coarse)
  = a((\corOp_\elem - \corOp_\elem^\ell)\xi_\coarse, \corOp_\elem \xi_\coarse)\\
  & = a((\corOp_\elem - \corOp_\elem^\ell)\xi_\coarse, (\idOp - \injectionOp\projectionOp)(1-\chi)\corOp_\elem\xi_\coarse).
 \end{align*}
 Using~Lemma~\ref{lem:stab} (particularly the inequalities~\eqref{eq:stab_ah}), Lemma~\ref{LEM:bound_eta_no_eta} (with $m = \corOp_\elem\xi_\coarse$, $\eta = 1-\chi$) and~\eqref{EQ:LS1} locally, we get
 \begin{equation*}
  \|(\corOp_\elem - \corOp_\elem^\ell)\xi_\coarse\|_a \lesssim \frac{\max\{1,\beta\}}{\min\{1,\alpha\}}\|\corOp_\elem\xi_\coarse\|_{a,\Nb^\ell(\elem) \setminus \Nb^{\ell-3}(\elem)} \lesssim \frac{\max\{1,\beta\}}{\min\{1,\alpha\}} \|\corOp_\elem\xi_\coarse\|_{a,\Omega \setminus \Nb^{\ell-3}(\elem)}.
 \end{equation*}
 Going back to~\eqref{eq:locproof} and using Theorem~\ref{t:decay}, we arrive at
 \begin{align*}
  \|(\msOp - \msOp^\ell)\xi_\coarse\|_a &\leq \sum_{\elem \in \mesh_\coarse} \|(\corOp_\elem - \corOp_\elem^\ell)\xi_\coarse\|_a \lesssim \frac{\max\{1,\beta\}}{\min\{1,\alpha\}}\sum_{\elem \in \mesh_\coarse} \theta^{\ell-3} \|\corOp_\elem\xi_\coarse\|_a.
 \end{align*}
 As a last step, we use the stability of $\corOp_\elem$, i.e., 
 \begin{equation*}
  \|\corOp_\elem\xi_\coarse\|_a \lesssim \|\injectionOp\xi_\coarse\|_{a,\elem} = \|\injectionOp\projectionOp\tilde m_\coarse\|_{a,\elem} \lesssim \sqrt{\beta/\alpha}\|\tilde m_\coarse\|_{a,\omega_\elem}
 \end{equation*}
 for any $\elem \in \mesh_\coarse$, which follows from the definition of $\corOp_\elem$ in~\eqref{eq:corOpEle} and Lemma~\ref{lem:stab}. Finally, using the stability of $\tilde m_\coarse$ in~\eqref{eq:stabTm} as well as the limited overlap of the supports $\omega_\elem$, we obtain
 \begin{equation*} 
  \|(\msOp - \msOp^\ell)\projectionOp\tilde m_\coarse\|_a \lesssim \frac{\max\{1,\beta^{3/2}\}}{\min\{1,\alpha^{2}\}}\theta^{\ell} \|f\|_0.
 \end{equation*}	
 We now combine this estimate with~\eqref{eq:errorSplit}, which yields the assertion. The result is also valid for $\ell \leq 2$, for which only the constant must be suitably adjusted.
\end{proof}

\section{Numerical Experiments}\label{s:numerics}
We investigate the behavior of the practical multiscale method~\eqref{eq:locMethod} to approximate the solution to~\eqref{eq:PDEell} with $A$ as defined in Figure \ref{FIG:numerics} (first and second row). We refer to these as the checkerboard, random, and channel case, respectively. The coefficients are piecewise constant on a domain partition into $2^6 \times 2^6$ small squares, and each cell is either black or white. In the checkerboard case, we set $f(x) = 5 \pi^2 \sin(2 \pi x_1) \cos(2 \pi x_2)$, while $f=1$ is chosen in the random case, and $f=100$ in the channel case.
\begin{figure}[t]\centering
 \begin{tabular}{p{.3\textwidth}p{.3\textwidth}p{.3\textwidth}}
    \begingroup\centering\includegraphics[width=.3\textwidth, trim=225 330 192 335, clip,draft=false]{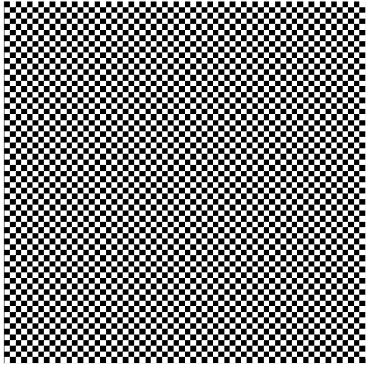}\endgroup
  & \begingroup\centering\includegraphics[width=.3\textwidth, trim=225 330 192 335, clip,draft=false]{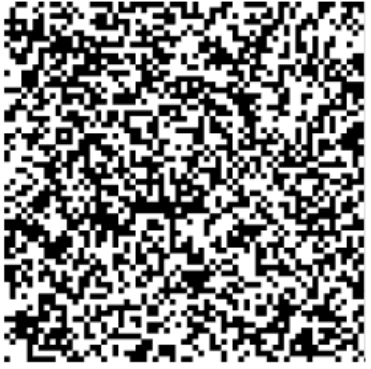}\endgroup
  & \begingroup\centering\includegraphics[width=.3\textwidth, trim=177 295 160 290, clip,draft=false]{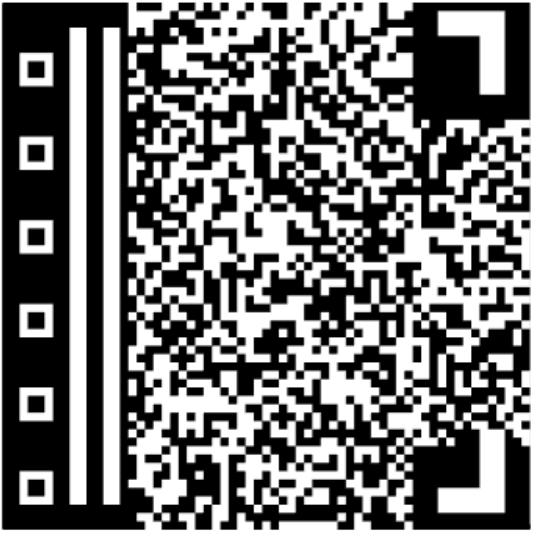}\endgroup
  \\
    \begingroup\centering\small $A = \begin{cases} 1 & \text{ in black,} \\ 10 & \text{ in white,} \end{cases}$\endgroup
  & \begingroup\centering\small $A = \begin{cases} \tfrac1{100} & \text{ in black,} \\ 1 & \text{ in white,} \end{cases}$\endgroup
  & \begingroup\centering\small $A = \begin{cases} 1 & \text{ in black,} \\ 100 & \text{ in white} \end{cases}$\endgroup
  \\
    \begingroup\centering\includegraphics[width=.3\textwidth]{pictures/checkerboard.tikz}\endgroup
  & \begingroup\centering\includegraphics[width=.3\textwidth]{pictures/random.tikz}\endgroup
  & \begingroup\centering\includegraphics[width=.3\textwidth]{pictures/channel.tikz}\endgroup
 \end{tabular}
 \caption{\small Numerical experiments: Checkerboard case in the left column, random case in the middle column, and channel case in the right column. The structure of the diffusion coefficient is illustrated in the top row, the values in black and white cells are given in the second row, and the error plots are presented in the third row. Here, the red dashed lines represent $\ell = 2$, while the blue solid lines correspond to $\ell = 3$, and the black lines refer to $\ell = 4$. Circular marks refer to the $L^2$ error, and crosses refer to the energy error. \corr{The green graphs illustrate simulations with \( \ell = 5\), where the contrast has been multiplied by a factor of 10: In the first example, the value of white pixels is replaced by 100, in the second one by 10, and in the third one by 1000. Here, squares refer to the $L^2$ error, and diamonds to the energy error in this case.}}\label{FIG:numerics}
\end{figure}

The theoretical results of our work quantify the error of the multiscale solution compared to a very fine reference HDG solution based on a mesh with mesh size $\fine$ that resolves all the oscillations in the coefficient. For our numerical examples, we set $\fine = 2^{-8}$ and work with the linear LDG-H method with $\tau = \tfrac{100}\fine$. In Figure~\ref{FIG:numerics} (bottom row), we plot the errors of solutions to~\eqref{eq:locMethod} for different coarse mesh sizes $\coarse$, $p=1$, and $\ell=2,3,4$. We present the errors in the energy norm (crosses) and the $L^2$-norm (circles). \corr{We also include the same experiments with a larger contrast (additional factor 10) and $\ell = 5$ (green graphs, squares depict the $L^2$-error and diamonds the energy error). From the theory, one would expect worse behavior with higher contrast, which is not observed here. Only the parameter $\ell$ needs to be slightly adjusted. Generally, we observe that if $\ell$ is too small}, the error curve stagnates at some point if $\coarse$ is decreased, which is in line with the theoretical result in Theorem~\ref{t:errloc} that states that $\ell$ needs to be suitably increased with decreasing $\coarse$ to keep the convergence rate. 

\section{Conclusions}
In this manuscript, we have derived and analyzed a localized orthogonal decomposition approach for hybrid discontinuous Galerkin formulations (LDG-H, RT-H, or BDM-H) to discretize elliptic PDEs with highly oscillatory rough coefficients. The main obstacle in constructing such a method is the requirement of stable mesh transfer operators to deal with the non-nestedness of the discrete spaces. We have presented an appropriate choice of such operators and proved an optimal first-order error estimate for an idealized globally defined multiscale method. Further, the localization of the method has been analyzed, and its practical behavior has been illustrated in numerical examples.

We believe the presented analysis is only a first step towards the reliable combination of hybrid discontinuous Galerkin methods and the localized orthogonal decomposition strategy. In particular, the aim is to extend the scheme and the corresponding analysis to enable higher-order convergence rates exploiting higher-order ansatz spaces of the HDG postprocessing strategies.
%
\bibliographystyle{ARalpha}
\bibliography{lod_hdg}
%
%
\appendix
%
\section{Construction of the lifting operator}\label{SEC:injection_op}
A possible operator $\liftingOp\colon\skeletalSpace_{\fine} \to \contdElementSpace$ has been constructed in \cite[Sect.\ 5.2]{LuRK22a} and is motivated by \cite{TanPhD}, \cite[Lem.\ A.3]{GiraultR86} (which guarantees that the operator is well-defined in two spatial dimensions), and \cite[Def.\ 5.46]{Monk03}. We follow their construction and characterize $\liftingOp \mu$ in two spatial dimensions via
\begin{subequations}
\begin{align}
 \int_\fineElem\liftingOp \lambda\, v\dx &= \int_\fineElem\localU \lambda\, v\dx && \text{for all } v \in \polynomials_p(\fineElem) \, \text{and } \fineElem \in \mesh_{\fine},\label{eq:define-s-a}\\
 \int_{\face}\liftingOp \lambda\, \eta \ds &= \int_\face\lambda\, \eta \ds && \text{for all } \eta \in \polynomials_{p+1}(\face) \, \text{and } \face \in \faceSet_{\fine}\label{eq:define-s-face}\\
 \liftingOp \lambda(\vec x) &= \avg{ \lambda(\vec x)} && \text{for all vertices } \vec x \text{ in } \mesh_{\fine},
\end{align}
\end{subequations}
where $\avg{ \lambda(\vec x)}$ is the mean of all possibly attained values in vertex $\vec x$. The operator $\liftingOp$ can be extended to three dimensions using \cite[(5.11e)]{LuRK22a} if
\begin{equation*}
 \contdElementSpace := \{ u \in H^1_0(\Omega) \colon u|_\elem \in \polynomials_{p+d+1}(\elem) \; \text{for all } \elem \in \mesh_\fine \}.
\end{equation*}
The following variation of \cite[Lem.\ 5.5]{LuRK22a} holds.
\begin{lemma}[Properties of $\liftingOp$]\label{LEM:lifting}
 If $\tau h \lesssim 1$ and \eqref{EQ:LS1}--\eqref{EQ:LS4} hold, we have that
 \begin{align*}
  \mu & = \gamma_\fine \faceProj \liftingOp \mu && \text{(trace identity)} \\
  \| \liftingOp \mu \|_{0,\elem} & \eqsim \| \mu \|_{\fine,\elem} && \text{(norm equivalence),}\\
  \liftingOp \gamma_\fine w & = w && \text{(lifting identity),}\\
  |\liftingOp \mu|_{1,\elem} & \eqsim \| A^{-1} \localQ \mu \|_{0,\elem} &&  \text{(lifting estimate).}
 \end{align*}
 for all $\mu \in \skeletalSpace_\fine$ and $w \in \linElementSpace_\fine$.
\end{lemma}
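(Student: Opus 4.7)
The plan is to verify the four properties in the order listed, following the template of \cite{LuRK22a}. The local-solver properties \eqref{EQ:LS1}--\eqref{EQ:LS4} are the main analytic tools, used to compare the lifted object $\liftingOp\mu$ to its underlying skeletal and volumetric data.

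The trace identity follows essentially from \eqref{eq:define-s-face}: since $\skeletalSpace_\face = \polynomials_p(\face) \subset \polynomials_{p+1}(\face)$, testing against any $\eta \in \skeletalSpace_\face$ shows that $\faceProj \gamma_\fine \liftingOp \mu$ and $\faceProj \mu = \mu$ have the same face moments, hence coincide. For the lifting identity, fix $w \in \linElementSpace_\fine$. By \eqref{EQ:LS4}, $\localU \gamma_\fine w = w$ on each $\fineElem \in \mesh_\fine$. As $w|_\fineElem \in \polynomials_1(\fineElem) \subset \polynomials_{p+d+1}(\fineElem)$ and $w$ is continuous, the function $w$ itself satisfies the interior moment condition \eqref{eq:define-s-a}, trivially satisfies the face moment condition \eqref{eq:define-s-face} (its trace is $\gamma_\fine w$), and has single-valued vertex values. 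Unisolvence of the DOFs defining $\liftingOp$ on $\contdElementSpace$ then forces $\liftingOp \gamma_\fine w = w$.

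For the norm equivalence I would use that $\liftingOp \mu|_\fineElem \in \polynomials_{p+d+1}(\fineElem)$ is determined by a fixed number of DOFs. A scaling argument to a reference element bounds $\|\liftingOp \mu\|_{0,\fineElem}$ from above by the size of these DOFs; the interior ones equal the moments of $\localU \mu$ by \eqref{eq:define-s-a}, and \eqref{EQ:LS2} yields $\|\localU \mu\|_{0,\fineElem} \lesssim \|\mu\|_{\fine,\fineElem}$, while the face ones are controlled by $\|\mu\|_{\fine,\fineElem}$ after the intrinsic $\fine$-rescaling. Conversely, the trace identity combined with a discrete trace inequality for polynomials on $\fineElem$ gives
\[
 \|\mu\|_{\fine,\fineElem}^2 \, \eqsim \, \tfrac{|\fineElem|}{|\partial \fineElem|} \| \faceProj \gamma_\fine \liftingOp \mu \|_{L^2(\partial \fineElem)}^2 \, \lesssim \, \tfrac{|\fineElem|}{|\partial \fineElem|} \| \gamma_\fine \liftingOp \mu \|_{L^2(\partial \fineElem)}^2 \, \lesssim \, \|\liftingOp \mu\|_{0,\fineElem}^2.
\]

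The lifting estimate is where I expect the main obstacle. I would decompose $\liftingOp \mu = \localU \mu + r$ with $r := \liftingOp \mu - \localU \mu$ and handle the two pieces separately. Combining \eqref{EQ:LS3} locally with \eqref{EQ:LS1} and the boundedness of $A$ gives $\|\nabla \localU \mu\|_{0,\fineElem} \eqsim \|\localQ \mu\|_{0,\fineElem}$, with the mismatch $\fine^{-1}\|\localU \mu - \mu\|_{\fine,\fineElem}$ absorbed through \eqref{EQ:LS1}. For $r$, the interior moment condition \eqref{eq:define-s-a} forces $r|_\fineElem$ to be $L^2(\fineElem)$-orthogonal to $\polynomials_p(\fineElem)$, while its face moments encode the mismatch between the continuous lift $\liftingOp\mu$ and the element-local $\localU \mu$; an inverse inequality and a reference-element scaling then give $|r|_{1,\fineElem} \lesssim \fine^{-1/2}\|\mu - \gamma_\fine \localU \mu\|_{L^2(\partial \fineElem)}$, which is bounded by $\|\localQ \mu\|_{0,\fineElem}$ via \eqref{EQ:LS1}. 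Running the same decomposition in reverse (using \eqref{EQ:LS3} to control $\|\localQ\mu\|_{0,\fineElem}$ by $|\localU\mu|_{1,\fineElem}$ plus a lower-order term, then $|\localU\mu|_{1,\fineElem} \leq |\liftingOp\mu|_{1,\fineElem} + |r|_{1,\fineElem}$) yields the matching lower bound. The delicate point is that the $H^1$-conformity of $\liftingOp\mu$ couples neighbouring elements, so the element-local control of $r$ cannot be purely local and must rely on the jump-absorbing power of the HDG penalty encoded in \eqref{EQ:LS1}, mirroring the corresponding step in \cite{LuRK22a}.
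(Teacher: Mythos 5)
Your handling of the trace identity, the lifting identity, the norm equivalence, and the \emph{upper} bound of the lifting estimate is broadly consistent with the paper, which proves the trace identity directly from \eqref{eq:define-s-face} and delegates the other three to the analogous arguments in \cite{LuRK22a}. The genuine gap is in the \emph{lower} bound $\| \localQ \mu \|_{0,\elem} \lesssim |\liftingOp \mu|_{1,\elem}$, which is precisely the one step the paper proves in detail. Your reverse direction reads: by \eqref{EQ:LS3}, $\|\localQ\mu\|_{0} \le \|A\nabla_\fine\localU\mu\|_{0} + C\,\fine^{-1}\|\localU\mu-\mu\|_{\fine}$, with the second term treated as ``lower order'' and absorbed via \eqref{EQ:LS1}. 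But \eqref{EQ:LS1} only gives $\fine^{-1}\|\localU\mu-\mu\|_{\fine} \le C'\|\localQ\mu\|_{0}$ --- a term of exactly the same order as the quantity you are bounding --- and nothing guarantees $CC'<1$, so absorption fails. The same circularity infects the variant routed through $|r|_{1,\fineElem}$ with $r = \liftingOp\mu - \localU\mu$: every control you have on the mismatch ultimately returns $\|\localQ\mu\|_{0}$ with an uncontrolled constant.

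The paper sidesteps this entirely with an orthogonality argument you should adopt. Testing \eqref{EQ:hdg_primary} with $\vec p_\elem = \vec v \in \polynomials_p^d(\fineElem)$, then replacing $\localU\mu$ by $\liftingOp\mu$ via \eqref{eq:define-s-a} (since $\ddiv \vec v \in \polynomials_p(\fineElem)$) and $\mu$ by $\liftingOp\mu$ via \eqref{eq:define-s-face} (since $\vec v\cdot\Nu \in \polynomials_p(\face) \subset \polynomials_{p+1}(\face)$), and integrating by parts yields $\int_\fineElem A^{-1}\localQ\mu\cdot\vec v\dx = -\int_\fineElem \nabla\liftingOp\mu\cdot\vec v\dx$ for all such $\vec v$ and all $\fineElem\in\mesh_\fine$. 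Because $A$ is element-wise constant, $-A^{-1}\localQ\mu \in \polynomials_p^d(\fineElem)$ is therefore the $L^2(\fineElem)$-orthogonal projection of $\nabla\liftingOp\mu$ onto $\polynomials_p^d(\fineElem)$, whence $\|A^{-1}\localQ\mu\|_{0,\fineElem} \le \|\nabla\liftingOp\mu\|_{0,\fineElem}$ and the lower bound follows from the uniform bounds on $A$ after summing over $\fineElem\subset\elem$. This replaces your absorption step with an exact identity and requires no smallness of constants.
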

\begin{proof}
 The norm equivalence, the lifting identity, and the upper bound of the lifting estimate can be shown completely analogously to \cite[Lem.\ 5.5]{LuRK22a}. The trace identity follows immediately from \eqref{eq:define-s-face}. To show the lower bound of the lifting estimate, we observe that for $\mu \in \skeletalSpace_\fine$, $\vec v \in \polynomials_p^d(\fineElem)$, and $\fineElem \in \mesh_{\fine}$
 \begin{align*}
  \int_\fineElem A^{-1} \localQ \mu \cdot \vec v \dx &= \int_\fineElem\localU \mu\, \ddiv \vec v \dx - \int_{\partial \fineElem} \mu\, v \cdot \Nu \ds \\
  &= \int_\fineElem\liftingOp \mu\, \ddiv \vec v \dx - \int_{\partial \fineElem} \liftingOp \mu\, \vec v \cdot \Nu \ds = - \int_\fineElem\nabla \liftingOp \mu \cdot \vec v\dx,
 \end{align*}
 where the first equality is \eqref{EQ:hdg_primary}, the second equality is \eqref{eq:define-s-a} and \eqref{eq:define-s-face}, and the third equality is integration by parts. The above identity implies that $-A^{-1} \localQ \mu $ is the $L^2$-orthogonal projection of $\nabla \liftingOp \mu$ onto $\polynomials_p^d(\fineElem)$, which in turn implies the lower bound of the lifting estimate.
\end{proof}

%
\section{Poincar\'e--Friedrichs inequalities for DG and HDG}
\subsection{Continuous Poincar\'e--Friedrichs inequality}
We first state a continuous version of the Poincar\'e--Friedrichs inequality generalized to certain non-convex domains.
\begin{theorem}\label{TH:poincare_smooth}
 Consider a domain $\Omega \subset \IR^d$ that can be decomposed into $n \in \IN$ 
 (not necessarily disjoint) convex domains $\Omega_i \neq \emptyset$ ($i = 1, \dots, n$). That is, we need to have that $\Omega = \operatorname{int} \bigcup_{i=1}^n \bar \Omega_i$. Assume that each $\Omega_i$ can be inscribed into a ball around some point $x_i$ with diameter $r$. Moreover, let $\Gamma_i \subset \partial\Omega_i$ be such that $\mu(\Omega_i) / \mu(\Gamma_i) \le \delta r$ for some $\delta > 0$ and all $i = 1, \dots, n$. Then for all $u \in H^{1}(\Omega)$, we have
 \begin{equation*}
  \| u \|^2_{L^2(\Omega)} \lesssim r^2 \| \nabla u \|^2_{L^2(\Omega)} + \delta r \| u \|^2_{L^2(\Gamma)},
 \end{equation*}
 where $\Gamma = \bigcup_{i=1}^n \Gamma_i$.
\end{theorem}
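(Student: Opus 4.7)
The plan is to reduce the global inequality to a local trace Poincaré inequality on each convex piece $\Omega_i$ and then sum up. Since each $\Omega_i$ is convex and fits inside a ball of diameter $r$, there is a classical sharp Poincaré constant (Payne--Weinberger type) of order $r$ for functions with vanishing mean. I would exploit this by subtracting the mean on $\Gamma_i$ rather than on $\Omega_i$.

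More concretely, for each $i$ I would set $\bar u_i := \tfrac{1}{\mu(\Gamma_i)}\int_{\Gamma_i} u\,\ds$ and apply the convex Poincaré inequality together with a standard trace estimate to obtain
\begin{equation*}
 \|u - \bar u_i\|_{L^2(\Omega_i)}^2 \lesssim r^2\,\|\nabla u\|_{L^2(\Omega_i)}^2 + \frac{\mu(\Omega_i)}{\mu(\Gamma_i)}\,r\,\|u - \bar u_i\|_{L^2(\Gamma_i)}^2,
\end{equation*}
which after using $\mu(\Omega_i)/\mu(\Gamma_i) \leq \delta r$ and the triangle inequality for the piecewise-constant function $\bar u_i$ gives
\begin{equation*}
 \|u\|_{L^2(\Omega_i)}^2 \lesssim \|u - \bar u_i\|_{L^2(\Omega_i)}^2 + \mu(\Omega_i)\,|\bar u_i|^2 \lesssim r^2\,\|\nabla u\|_{L^2(\Omega_i)}^2 + \delta r\,\|u\|_{L^2(\Gamma_i)}^2,
\end{equation*}
where in the last step I use Cauchy--Schwarz to bound $\mu(\Omega_i)\,|\bar u_i|^2 \leq (\mu(\Omega_i)/\mu(\Gamma_i))\,\|u\|_{L^2(\Gamma_i)}^2$.

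Summing over $i = 1,\dots,n$ and using that $\Omega = \operatorname{int}\bigcup_i \bar\Omega_i$ (so $\|u\|_{L^2(\Omega)}^2 \leq \sum_i \|u\|_{L^2(\Omega_i)}^2$) together with $\sum_i \|\nabla u\|_{L^2(\Omega_i)}^2 \lesssim \|\nabla u\|_{L^2(\Omega)}^2$ (with the hidden constant depending on the maximal overlap multiplicity, which is finite by construction) yields the claim.

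The main obstacle is obtaining the trace Poincaré inequality on a convex domain with the \emph{scaling-sharp} constant of order $r$ in front of the gradient term and of order $r\,\mu(\Omega_i)/\mu(\Gamma_i)$ in front of the boundary term. A clean way to achieve this is by a scaling/contradiction argument: rescale $\Omega_i$ to unit diameter, apply compactness and the fact that convex domains form a compact family (Blaschke selection) modulo rigid motions, and track how the trace constant depends on $\mu(\Gamma_i)/\mu(\Omega_i)$. Alternatively, one can follow the explicit route using the Bogovski\u{\i}-type representation or directly invoke a trace inequality of the form $\|u\|_{L^2(\Gamma_i)}^2 \lesssim r^{-1}\|u\|_{L^2(\Omega_i)}^2 + r\,\|\nabla u\|_{L^2(\Omega_i)}^2$ valid for convex domains with explicit constants, and dualize it. In either case the key point is that the Poincaré constants degenerate only through $r$ and through $\mu(\Omega_i)/\mu(\Gamma_i)$, matching exactly the assumed quantity $\delta r$.
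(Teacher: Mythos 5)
Your proof is essentially the paper's own argument: the paper simply cites Ne\v{c}as' Theorem~1.2 for a single convex set and asserts that it ``can be directly extended to a union of such domains,'' which is precisely the localize-to-convex-pieces, subtract-the-$\Gamma_i$-mean, and sum strategy that you carry out in detail. One caveat worth recording: if you track your own constants, switching the mean from $\Omega_i$ to $\Gamma_i$ costs $\mu(\Omega_i)^{1/2}\,|\langle u\rangle_{\Omega_i}-\bar u_i| \lesssim \sqrt{\delta}\, r\, \|\nabla u\|_{L^2(\Omega_i)}$, so the gradient contribution comes out as $(1+\delta)\,r^2\|\nabla u\|^2$ rather than $r^2\|\nabla u\|^2$ (matching the stated bound only for $\delta \lesssim 1$, which is the regime in which the theorem is actually invoked), and the local trace inequality and the overlap of the $\Omega_i$ introduce shape-regularity and multiplicity constants that the paper's one-line proof does not address either.
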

\begin{proof}
 For a convex set, the proof follows the same lines as, e.g., \cite[Thm.~1.2]{Necas12} and can be directly extended to a union of such domains.
\end{proof}
Note that the result covers the L-shaped domain shown in Figure~\ref{FIG:l_shape} (left), with a possible decomposition indicated by the dashed line and $\Gamma$ is highlighted by the two bold lines. The (non-Lipschitz) domain with a crack in Figure~\ref{FIG:l_shape} (right) is also covered if, for example, $\Gamma$ is identical to the crack. In this case, the dashed line indicates a possible decomposition.
\begin{figure}
 \begin{tikzpicture}
  \draw[very thick] (1,0) -- (1,1) -- (2,1);
  \draw (2,1) -- (2,2) -- (0,2) -- (0,0) -- (1,0);
  \draw[dashed] (1,1) -- (.75,2);
  \draw[->] (1.7,0.25) -- (1.1,0.5);
  \draw[->] (1.7,0.25) -- (1.5,0.9);
  \node[circle, inner sep=1pt, fill=white] at (1.75,0.25) {$\Gamma$};
 \end{tikzpicture}
 \hspace{.3\textwidth}
 \begin{tikzpicture}
  \draw (0,0) -- (2,0) -- (2,2) -- (0,2) -- cycle;
  \draw[dashed] (0,1) -- (2,1);
  \draw[very thick] (0.5,1) -- (1.5,1);
 \end{tikzpicture}
 \caption{Illustration of a non-convex Lipschitz domain (left) and a non-convex square with crack (right)  with possible decomposition.}\label{FIG:l_shape}
\end{figure}
%
\subsection{Poincar\'e--Friedrichs inequality for broken Sobolev spaces}\label{SEC:broken_sobolev}
%
As above, let us consider a polygonally bounded Lipschitz domain $\Omega \subset \IR^d$, which is discretized by a geometrically conforming family of triangulations $(\mesh_\fine)_\fine$, i.e., partitions of $\Omega$ into simplices without hanging nodes. We require that any face $\face \subset \partial \Omega$ either satisfies $\face \subset \Gamma$ or $\mu(\face \cap \Gamma) = 0$. The family $(\mesh_\fine)_\fine$ is supposed to be shape-regular, which prevents simplices from deteriorating. That is, all angles are bounded away from zero.
For a mesh $\mesh_\fine = \{ \elem_1, \dots, \elem_N \}$ consisting of $N$ elements and an element-wise defined function $u_\fine$ (smooth enough to have traces), we write $\avg{ u_\fine }$ for the average with respect to a face, and $\jump{ u_\fine }$ for the jump, i.e., for $\face \subset \partial \elem_i \cap \partial \elem_j$ with $i < j$, we have
\begin{equation*}
 \avg{u_\fine} = \tfrac{1}{2} (u_{\fine,i} + u_{\fine,j}), \qquad \jump{ u_\fine } = u_{\fine,i} - u_{\fine,j},
\end{equation*}
where $u_{\fine,k} := u_\fine\vert_{T_k},\,k=1,\ldots N$. Further, for $\face \subset \partial \Omega$, we have
\begin{equation*}
 \avg{u_\fine} = u_\fine \qquad \text{ and } \qquad \jump{ u_\fine } = 0.
\end{equation*}
Next, we define the broken Sobolev space
\begin{equation*}
 H^1(\mesh_\fine) = \left\{ u_\fine \in L^2(\Omega) \;\colon\; u_\fine|_\elem \in H^1(\elem) \;\text{ for all } \elem \in \mesh_\fine\right \}
\end{equation*}
with induced semi-norm and dG jump semi-norm
\begin{equation*}
 | u_\fine |^2_{H^1(\mesh_\fine)} = \sum_{\elem \in \mesh_\fine} |u_\fine|^2_{H^1(\elem)} \quad \text{ and } \quad | u_\fine |^2_\J = \sum_{\face \in \faceSet_\fine} \frac{1}{\fine_\face} \int_\face \jump{u_\fine}^2 \ds,
\end{equation*}
respectively. Once again, $\faceSet_\fine$ denotes the set of faces of $\mesh_\fine$, $h_\face = \operatorname{diam}(\face)$, $\fine_\elem = \operatorname{diam}(\elem)$ and $\fine = \max_{\elem \in \mesh_\fine} \fine_\elem$. Notably, the dG norm
\begin{equation*}
 \| u_\fine \|_\textup{dG}^2 = | u_\fine|^2_{H^1(\mesh_\fine)} + | u_\fine |^2_\J
\end{equation*}
is a genuine norm on $H^1(\mesh_\fine)$. It can be understood as the energy norm of many dG schemes or analogous to the $H^1$-semi-norm for $H^1_0(\Omega)$ functions. We formulate a Poincar\'e--Friedrichs inequality for this norm.
\begin{theorem}\label{TH:poincare_dg}
 Let the assumptions of Theorem~\ref{TH:poincare_smooth} hold. Further, let $(\mesh_\fine)_\fine$ be a shape-regular family of geometrically conforming triangulations of $\Omega$ into simplices. Then we have for all $u_\fine \in H^1(\mesh_\fine)$ that
 \begin{equation}\label{eq:pc}
  \| u_\fine \|_{L^2(\Omega)} \lesssim \big(\fine + \sqrt{\delta r \fine} \big) \| u_\fine \|_\textup{dG} + r | u_\fine |_{H^1(\mesh_\fine)} + \sqrt{\delta r} \| u_\fine \|_{L^2(\Gamma)}.
 \end{equation}
\end{theorem}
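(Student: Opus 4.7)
The proof builds on the continuous Poincar\'e--Friedrichs inequality in Theorem~\ref{TH:poincare_smooth}. The idea is to introduce a conforming $H^1$-approximation $\tilde u \in H^1(\Omega)$ of $u_\fine$, apply the continuous estimate to $\tilde u$, and then control the nonconforming remainder $u_\fine - \tilde u$ through the jump seminorm.

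The natural candidate for $\tilde u$ is an Oswald-type (or related Karakashian--Pascal) averaging operator applied to $u_\fine$. I would choose this operator so that it possesses the three properties
\begin{equation*}
 \|u_\fine - \tilde u\|_{L^2(\Omega)} \lesssim \fine\, |u_\fine|_\J, \quad \|u_\fine - \tilde u\|_{L^2(\Gamma)} \lesssim \sqrt{\fine}\, |u_\fine|_\J, \quad |\tilde u|_{H^1(\Omega)} \lesssim |u_\fine|_{H^1(\mesh_\fine)},
\end{equation*}
where the first two follow from standard Oswald estimates together with a discrete trace inequality (applied element-by-element near $\Gamma$), and the third is the crucial $H^1$-stability that keeps the final jump contribution small. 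Note that the naive Oswald bound only gives $|\tilde u|_{H^1(\Omega)} \lesssim |u_\fine|_{H^1(\mesh_\fine)} + |u_\fine|_\J$; to obtain the sharper form without the $|u_\fine|_\J$ addend, I would instead take $\tilde u$ to be the $H^1$-orthogonal projection of $u_\fine$ onto $H^1(\Omega)$ (which automatically satisfies $|\tilde u|_{H^1(\Omega)} \leq |u_\fine|_{H^1(\mesh_\fine)}$ by Pythagoras) and then verify the $L^2$- and $L^2(\Gamma)$-approximation properties of this projection via a duality argument combined with a trace estimate.

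With $\tilde u$ in hand, I would apply Theorem~\ref{TH:poincare_smooth} to obtain
\begin{equation*}
 \|\tilde u\|_{L^2(\Omega)} \lesssim r\, |\tilde u|_{H^1(\Omega)} + \sqrt{\delta r}\, \|\tilde u\|_{L^2(\Gamma)}
\end{equation*}
and combine this with the triangle inequalities $\|u_\fine\|_{L^2(\Omega)} \leq \|\tilde u\|_{L^2(\Omega)} + \|u_\fine - \tilde u\|_{L^2(\Omega)}$ and $\|\tilde u\|_{L^2(\Gamma)} \leq \|u_\fine\|_{L^2(\Gamma)} + \|u_\fine - \tilde u\|_{L^2(\Gamma)}$. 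Inserting the three approximation bounds above, the $r$-contribution produces exactly $r\,|u_\fine|_{H^1(\mesh_\fine)}$, the boundary term $\sqrt{\delta r}\,\|u_\fine\|_{L^2(\Gamma)}$ is kept verbatim, and the two remaining jump contributions $\fine\,|u_\fine|_\J$ and $\sqrt{\delta r \fine}\,|u_\fine|_\J$ are absorbed by using $|u_\fine|_\J \leq \|u_\fine\|_\textup{dG}$. This yields the claimed estimate~\eqref{eq:pc}.

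The principal obstacle is establishing the sharp $H^1$-stability $|\tilde u|_{H^1(\Omega)} \lesssim |u_\fine|_{H^1(\mesh_\fine)}$ without an additional $|u_\fine|_\J$ contribution; a plain Oswald construction would otherwise introduce an unwanted $r\,|u_\fine|_\J$ term that exceeds the stated coefficient $\fine + \sqrt{\delta r \fine}$ in front of $\|u_\fine\|_\textup{dG}$. Working with the $H^1$-orthogonal projection resolves this cleanly, at the cost of proving the $L^2$- and $L^2(\Gamma)$-approximation bounds indirectly through duality. Throughout the argument the dependencies on the subdomain size $r$ and the boundary shape parameter $\delta$ inherited from Theorem~\ref{TH:poincare_smooth} must be tracked carefully, which is precisely what distinguishes this broken version from the classical one.
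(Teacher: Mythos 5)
Your overall strategy---build a conforming companion $\tilde u \in H^1(\Omega)$, apply Theorem~\ref{TH:poincare_smooth} to it, and control $u_\fine - \tilde u$ by the jumps---is the same one the paper gestures at (its proof is only a pointer to \cite{Brenner03}), and you correctly identify the crux: the $H^1$-stability of $\tilde u$ must not pick up $|u_\fine|_\J$, or an unwanted $r\,|u_\fine|_\J$ term appears. However, your repair does not work, and in fact \emph{no} choice of $\tilde u$ can satisfy your three properties simultaneously. Test the staircase $u_\fine = \epsilon \lfloor x_1/\fine \rfloor$ on $\Omega = (0,1)^2$ with $\Gamma$ the left edge: then $|u_\fine|_{H^1(\mesh_\fine)} = 0$, $u_\fine|_\Gamma = 0$, $|u_\fine|_\J \eqsim \epsilon/\fine$, but $\|u_\fine\|_{L^2(\Omega)} \eqsim \epsilon/\fine$. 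Your three properties plus Theorem~\ref{TH:poincare_smooth} would give $\|u_\fine\|_{L^2} \lesssim \fine\,|u_\fine|_\J + r\cdot 0 + \sqrt{\delta r}\,(\|u_\fine\|_{L^2(\Gamma)} + \sqrt{\fine}\,|u_\fine|_\J) \eqsim \epsilon/\sqrt{\fine}$, which is strictly smaller than $\epsilon/\fine$ for small $\fine$ --- a contradiction. The step that concretely fails is the bound $\|u_\fine - \tilde u\|_{L^2} \lesssim \fine\,|u_\fine|_\J$ for the $H^1$-orthogonal projection: in the duality argument the face term contributes $\sum_\face \int_\face \jump{u_\fine}\, \partial_n\phi \ds \lesssim |u_\fine|_\J \bigl(\sum_\face \fine_\face \|\nabla\phi\|^2_{L^2(\face)}\bigr)^{1/2}$, and the trace factor is of order $\|\nabla\phi\|_{L^2(\Omega)} \eqsim r\,\|g\|_{L^2}$, not $\fine\,\|g\|_{L^2}$; so duality only yields $\|u_\fine - \tilde u\|_{L^2} \lesssim r\,|u_\fine|_\J$. (For the staircase the projection is a constant and the error is indeed $\eqsim \epsilon/\fine = r\,|u_\fine|_\J$, not $\fine\,|u_\fine|_\J = \epsilon$.)

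The same example shows that \eqref{eq:pc} as stated cannot hold: with $|u_\fine|_{H^1(\mesh_\fine)} = \|u_\fine\|_{L^2(\Gamma)} = 0$ it reduces to $\epsilon/\fine \lesssim (\fine + \sqrt{\delta r \fine})\,\epsilon/\fine$, which fails as $\fine \to 0$. The sharp coefficient of the jump part of $\|u_\fine\|_\textup{dG}$ is $r$, not $\fine + \sqrt{\delta r \fine}$; Brenner's jump functional is the weaker face-mean quantity $\sum_\face |\face|^{-1}(\int_\face \jump{u_\fine}\ds)^2$, and passing to $|u_\fine|^2_\J$ by Cauchy--Schwarz does not produce the claimed gain. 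The Oswald route you dismiss, with the ``naive'' stability $|\tilde u|_{H^1(\Omega)} \lesssim |u_\fine|_{H^1(\mesh_\fine)} + |u_\fine|_\J$, does prove the corrected estimate $\|u_\fine\|_{L^2(\Omega)} \lesssim r\,\|u_\fine\|_\textup{dG} + \sqrt{\delta r}\,\|u_\fine\|_{L^2(\Gamma)}$, and this weaker version suffices for the only place the theorem is used (Theorem~\ref{TH:skeleton_spaces}), since there the jumps of $\localU m$ are themselves controlled by $\fine\,\|\localQ m\|_{L^2}$ via \eqref{EQ:LS1*}. So the right fix is not a better companion operator but a weaker statement.
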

\begin{proof}
 The proof mainly follows from the arguments in~\cite{Brenner03}. In particular, the left-hand side of~\eqref{eq:pc} is split into multiple contributions using the triangle inequality. These different terms are individually bounded, and Theorem~\ref{TH:poincare_smooth} is employed. 
\end{proof}
\subsection{Poincar\'e--Friedrichs inequality for broken skeleton spaces}
%
Recall that the skeleton space $\skeletalSpace_\fine \subset L^2(\faceSet_\fine)$ is given as an abstract space with norm
\begin{equation*}
 \| m \|^2_\fine = \sum_{\elem \in \mesh_\fine} \frac{\mu(\elem)}{\mu(\partial \elem)} \int_{\partial \elem} m^2 \ds.
\end{equation*}
We emphasize that this norm can be directly applied to broken Sobolev functions as well, i.e.,
\begin{equation*}
 \| u_\fine \|^2_\fine = \sum_{\elem \in \mesh_\fine} \frac{\mu(\elem)}{\mu(\partial \elem)} \int_{\partial \elem} u_\fine\vert_T^2 \ds \qquad \text{for all } u_\fine \in H^1(\mesh_\fine).
\end{equation*}
If $u_\fine \in \IP_p(\mesh_\fine) = \{ v_\fine \in H^1(\mesh_\fine) \;\colon\; v_\fine|_\elem \in \IP_p(\elem) \; \text{ for all } \elem \in \mesh_\fine\}$, we even have that
\begin{equation}\label{EQ:norm_weaker}
 \| u_\fine \|_\fine \lesssim \| u_\fine \|_{L^2(\Omega)}
\end{equation}
as a direct consequence of \cite[Lem.\ 1.46]{PietroErn12}. 
For the following result, we work with a generalized concept of the local solvers as described in Section~\ref{ss:hdg}. We only assume that the (non-surjective) local solvers
\begin{equation*}
 (\localU, \localQ) \colon \skeletalSpace_\fine \ni m \mapsto (\localU m, \localQ m) \in H^1(\mesh_\fine) \times L^2(\Omega)^d
\end{equation*}
satisfy~\eqref{EQ:LS3} and the following slightly adapted version of~\eqref{EQ:LS1}, i.e.
\begin{equation}
 \| \localU m - m \|^{2}_\fine \lesssim \sum_{\elem \in \mesh_\fine} \fine_\elem^2 \| A^{-1}\localQ m \|_{L^2(\elem)}^2, \tag{LS1*}\label{EQ:LS1*}
\end{equation}
where $\fine_\elem$ is the diameter of $\elem$. 
Importantly, \eqref{EQ:LS3} and a variant of \eqref{EQ:LS1*} hold for the LDG-H, RT-H, and BDM-H methods according to~\cite[Sect.\ 6]{LuRK22a}. Our condition \eqref{EQ:LS1*} can be proved by using their arguments in concert with \cite[Lem.\ 3.1 \& 3.4]{CockburnDGT2013}. 
The following result now quantifies a Poincar\'e--Friedrichs inequality for HDG methods.
\begin{theorem}[broken Poincar\'e--Friedrichs inequality]\label{TH:skeleton_spaces}
 Suppose that the assumptions of Theorem~\ref{TH:poincare_dg} hold. If $\localU$ and $\localQ$ satisfy~\eqref{EQ:LS1*} and~\eqref{EQ:LS3}, we have
 \begin{equation*}
  \| m \|_\fine + \| \localU m \|_{L^2(\Omega)} \lesssim \big(r + \fine + \sqrt{\delta r \fine}\big) \| A^{-1}\localQ m \|_{L^2(\Omega)} + \sqrt{\delta r} \| m \|_{L^2(\Gamma)}
 \end{equation*}
 for all $m \in \skeletalSpace_\fine$.
\end{theorem}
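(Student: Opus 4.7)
\bigskip

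\noindent\textbf{Proof plan.}  The strategy is to reduce the HDG-type Poincar\'e--Friedrichs inequality to the broken Poincar\'e--Friedrichs inequality of Theorem~\ref{TH:poincare_dg} by choosing $u_\fine = \localU m \in \polynomials_p(\mesh_\fine) \subset H^1(\mesh_\fine)$, and then to recover $\|m\|_\fine$ by comparing $m$ with $\localU m$ via~\eqref{EQ:LS1*}.

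First, I would bound the three ingredients appearing in Theorem~\ref{TH:poincare_dg} applied to $\localU m$.  The broken $H^1$-seminorm is handled by~\eqref{EQ:LS3}, namely
\[
 |\localU m|_{H^1(\mesh_\fine)} = \|\nabla_\fine \localU m\|_0 \lesssim \|A\nabla_\fine \localU m + \localQ m\|_0 + \|\localQ m\|_0 \lesssim \fine^{-1}\|\localU m - m\|_\fine + \|\localQ m\|_0 \lesssim \|\localQ m\|_0,
\]
where the last step uses~\eqref{EQ:LS1*}.  For the dG jump seminorm, the key observation is that the skeletal variable $m$ is single-valued on every interior face, so $\jump{\localU m}|_\face = \jump{\localU m - m}|_\face$.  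Together with shape regularity ($\fine_\face \sim \fine_\elem$ and $|\elem|/|\partial \elem| \sim \fine_\elem$), this gives
\[
 |\localU m|_\J^2 \lesssim \sum_{\elem \in \mesh_\fine} \fine_\elem^{-1}\int_{\partial \elem}(\localU m - m)^2\ds \sim \sum_{\elem \in \mesh_\fine}\fine_\elem^{-2}\|\localU m - m\|_{\fine,\elem}^2 \lesssim \|\localQ m\|_0^2
\]
by the local version of~\eqref{EQ:LS1*}.  Finally, for the boundary term I would write $\|\localU m\|_{L^2(\Gamma)} \leq \|\localU m - m\|_{L^2(\Gamma)} + \|m\|_{L^2(\Gamma)}$ and estimate, by the same face-to-element trick and~\eqref{EQ:LS1*},
\[
 \|\localU m - m\|_{L^2(\Gamma)}^2 \lesssim \sum_{\elem \in \mesh_\fine} \fine_\elem^{-1}\|\localU m - m\|_{\fine,\elem}^2 \lesssim \fine\,\|\localQ m\|_0^2.
\]

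Inserting these three estimates into the bound from Theorem~\ref{TH:poincare_dg} yields
\[
 \|\localU m\|_{L^2(\Omega)} \lesssim (r + \fine + \sqrt{\delta r \fine})\,\|\localQ m\|_0 + \sqrt{\delta r}\,\|m\|_{L^2(\Gamma)}.
\]
To conclude the bound on $\|m\|_\fine$, I would apply the triangle inequality $\|m\|_\fine \leq \|\localU m\|_\fine + \|\localU m - m\|_\fine$, use~\eqref{EQ:norm_weaker} (which is applicable because $\localU m \in \polynomials_p(\mesh_\fine)$) to write $\|\localU m\|_\fine \lesssim \|\localU m\|_{L^2(\Omega)}$, and bound the remainder again by~\eqref{EQ:LS1*}: $\|\localU m - m\|_\fine \lesssim \fine\,\|\localQ m\|_0$.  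Summing both contributions produces the asserted inequality.

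The main obstacle I anticipate is bookkeeping the equivalences between face-integral quantities ($\sum_\face \fine_\face^{-1}\int_\face \cdot\,\ds$), element-boundary quantities ($\sum_\elem \fine_\elem^{-1}\int_{\partial \elem}\cdot\,\ds$), and the weighted skeletal norm $\|\cdot\|_\fine$ so that the powers of $\fine$ line up and nothing is lost through the triangle inequality on the jump.  Shape regularity is what makes these equivalences hold with constants independent of the element, but one has to verify that~\eqref{EQ:LS1*} may be applied element-wise, which the formulation in the excerpt suggests.
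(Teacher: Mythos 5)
Your proposal is correct and follows essentially the same route as the paper: apply Theorem~\ref{TH:poincare_dg} to $\localU m$, control the broken $H^1$-seminorm via \eqref{EQ:LS3} together with \eqref{EQ:LS1*}, convert the jump and boundary terms into element-boundary integrals of $\localU m - m$ (using that $m$ is single-valued on faces), and recover $\|m\|_\fine$ by the triangle inequality with \eqref{EQ:norm_weaker} and \eqref{EQ:LS1*}. The bookkeeping concern you flag is real but harmless, since the weights $\fine_\elem^{\pm 2}$ cancel exactly as you anticipate under shape regularity.
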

\begin{proof}
 The estimates~\eqref{EQ:LS1*} and~\eqref{EQ:norm_weaker} yield
 \begin{equation*}
  \| m \|_\fine \le \| m - \localU m \|_\fine + \| \localU m \|_\fine \lesssim \fine \| A^{-1}\localQ m \|_{L^2(\Omega)} + \| \localU m \|_{L^2(\Omega)}.
 \end{equation*}
 Next, we apply Theorem \ref{TH:poincare_dg}, and afterwards \eqref{EQ:LS3} and \eqref{EQ:LS1*} to obtain
 \begin{align*}
  \| \localU m \|^2_{L^2(\Omega)} & \lesssim \big(\fine^2 + \delta r \fine \big) \|\localU m\|^2_\textup{dG} + r^2 |\localU m|^2_{H^1(\mesh_\fine)} + \delta r \| \localU m \|^2_{L^2(\Gamma)} \\
  & \lesssim \big(\fine^2 + r^2 + \delta r \fine \big) |\localU m|^2_{H^1(\mesh_\fine)} + \delta r \| \localU m \|^2_{L^2(\Gamma)} \\ 
  & \qquad + \big(  \fine + \delta r \big) \sum_{\face \in \faceSet_\fine } \int_\face \jump{\localU m}^2 \ds \\
  & \lesssim \big(\fine^2 + r^2 + \delta r \fine \big) \|A^{-1}\localQ m\|^2_{L^2(\mesh_\fine)} + \delta r \min\left\{ \| m \|^2_{L^2(\Gamma)}, \| \localU m \|^2_{L^2(\Gamma)} \right\}  \\
  & \qquad + \big( \fine + \delta r \big) \sum_{\elem \in \mesh_\fine} \int_{\partial \elem} (\localU m - m)^2 \ds.
 \end{align*}
 The last inequality exploits the triangle inequality, and adding more positive integrals only enlarges the right-hand side. Note that we obtain the minimum in the last step as one can keep the term $\delta r \| \localU m \|^2_{L^2(\Gamma)}$ or add $\pm m$. In the latter case, the difference term enters in the last term. The result then follows from the definition of~\mbox{$\| \localU m - m \|_\fine$} and~\eqref{EQ:LS1*}.
\end{proof}
For HDG bilinear forms, which have the general form
\begin{equation*}
 a(m,\mu) = \int_\Omega A^{-1} \localQ m \cdot \localQ \mu \dx + s(m,\mu)
\end{equation*}
with a symmetric positive semi-definite stabilizer/penalty term $s$, the above results clearly show that also
\begin{equation*}
 \| m \|^2_\fine + \| \localU m \|^2_{L^2(\Omega)} \lesssim \alpha^{-1}\big(r + \fine + \sqrt{\delta r \fine}\big)^2 a(m,m) + \delta r \| m \|^2_{L^2(\Gamma)}
\end{equation*}
for all $m \in \skeletalSpace_\fine$. Further, the term $\| m \|_{L^2(\Gamma)}$ vanishes if $m = 0$ on $\Gamma \subset \partial \Omega$.
\begin{remark}[Covered methods]
 The results in this section are guaranteed to hold for the LDG-H, RT-H, and BDM-H methods. If \eqref{EQ:LS1*} and \eqref{EQ:LS3} hold for HHO, the results of this section readily apply to HHO as well.
\end{remark}
\end{document}